\renewcommand{\cref}{\Cref}
\newcommand{\TriF}{{\mathcal{T}}}
\newcommand{\ex}{{\mathrm{ex}}}
\newcommand{\sat}{{\mathrm{sat}}}
\newcommand{\Sea}{B_{\mathrm{Sea}}}
\newcommand{\Buf}{B_{\mathrm{Buf}}}
\newcommand{\Bouq}{B_{\mathrm{Bouq}}}
\newtheorem{theorem}{Theorem}[section]
\newtheorem{observation}[theorem]{Observation}
\newtheorem{corollary}[theorem]{Corollary}
\newtheorem{lemma}[theorem]{Lemma}
\newtheorem{example}[theorem]{Example}
\newtheorem{proposition}[theorem]{Proposition}
\renewcommand\@makefntext[1]{%
  \parindent0.7em%
  \@thefnmark.\ #1}
\patchcmd{\maketitle}{\hb@xt@1.8em}{\hb@xt@0.7em}{}{}
\providecommand{\keywords}[1]
{
  \small	
  \textbf{\textit{\quad Keywords---}} #1
}
\newcommand{\appendixnumberline}[1]{Appendix\space}
\let\oldappendix\appendix
\renewcommand{\appendix}{%
  \addtocontents{toc}{\let\protect\numberline\protect\appendixnumberline}%
  \renewcommand{\@seccntformat}[1]{Appendix~\csname the##1\endcsname:\ }%
  \oldappendix
}
\let\@fnsymbol\@arabic
\title{On saturated triangulation-free convex geometric graphs}
\author{David Garber\thanks{\ 
			School of Mathematical Sciences, Faculty of Science,
			Holon Institute of Technology, Israel.
			Research partially supported by a grant from the Ariel University - Holon Institute of Technology Research foundation. 
\texttt{garber@hit.ac.il}}
\and Chaya Keller\thanks{\ 
			School of Computer Science,
			Ariel University, Israel.
			Research partially supported by Grant 1065/20 from the Israel Science Foundation, by Grant 2022792 from the Binational US-Israel Science Foundation, and by a grant from the Ariel University - Holon Institute of Technology Research foundation. 
\texttt{chayak@ariel.ac.il}}
		\and 
        Olga Nissenbaum\thanks{\ 
			School of Computer Science,
			Ariel University, Israel.
			Research partially supported by Grant 1065/20 from the Israel Science Foundation, by Grant 2022792 from the Binational US-Israel Science Foundation, and by a grant from the Ariel University - Holon Institute of Technology Research foundation. 
\texttt{olganissenbaum@gmail.com}}
        \and Shimon Aviram\thanks{\ 
	        School of Computer Sciences, Faculty of Science,
			Holon Institute of Technology, Israel.
			Research partially supported by a grant from the Ariel University - Holon Institute of Technology Research foundation. \texttt{shimonav@hit.ac.il}}
}
\date{}
\newtheorem{remark}[theorem]{Remark}
\begin{document}
%\fontsize{15}{17}\selectfont	

\maketitle

\begin{abstract}
A {\it convex geometric graph} is a graph whose vertices are the corners of a convex polygon $P$ in the plane and whose edges are boundary edges and diagonals of the polygon. It is called {\it triangulation-free} if its non-boundary edges do not contain the set of diagonals of some triangulation of $P$. Aichholzer et al.~(2010) showed that the maximum number of edges in a triangulation-free convex geometric graph on $n$ vertices is ${{n}\choose{2}}-(n-2)$, and subsequently, Keller and Stein (2020) and (independently) Ali et al.~(2022) characterized the triangulation-free graphs with this maximum number of edges.

We initiate the study of the saturation version of the problem, namely, characterizing the triangulation-free convex geometric graphs which are not of the maximum possible size, but yet the addition of any edge to them results in containing a triangulation.

We show that, surprisingly, there exist saturated graphs with only $g(n)=O(n\log n)$ edges. Furthermore, we prove that for any $n>n_0$ and any $g(n)\leq t \leq {{n}\choose{2}}-(n-2)$, there exists a saturated graph with $n$ vertices and $t$ edges. In addition, we  
obtain a complete characterization of all saturated graphs whose number of edges is ${{n}\choose{2}}-(n-1)$, which is $1$ less than the maximum.
\end{abstract}

\keywords{geometric graphs,  triangulation-free, saturation problem, saturation spectrum.}
    
%================================	

\section{Introduction}

\textbf{Tur\'{a}n-type problems in geometric settings.} Tur\'{a}n-type problems in graph theory study the maximum number $\ex(n,H)$ of edges in a graph $G$ on $n$ vertices that does not contain a copy of a `forbidden' configuration $H$ (e.g., a triangle) or, more generally, a member of some family $\mathcal{F}$ of `forbidden' configurations (e.g., a spanning tree of the vertex set), see, e.g., the surveys~\cite{Furedi91,Sidorenko95}. Initiated by Tur\'{a}n in 1941~\cite{Turan}, this research direction has become a central branch of extremal graph theory, with numerous works studying $\ex(n,H)$ for various configurations $H$.  One of the sub-directions studies Tur\'{a}n-type problems in a geometric setting, where the vertices of $G$ are points in the plane (sometimes, in convex position) and the edges of $H$ are realized by non-crossing segments. Well-studied examples include cases where $H$ is a set of $k$ pairwise disjoint edges~\cite{K79,KP96}, a non-crossing perfect matching~\cite{KP12}, a non-crossing spanning tree~\cite{Hernando}, a triangulation~\cite{AichholzerCMFHHHW10}, etc. 

Once $\ex(n,H)$ is determined, the next natural step is to characterize the family $\mathrm{Ex}(n,H)$ of extremal $H$-free graphs -- i.e., the graphs with $n$ vertices and $\ex(n,H)$ edges that do not contain a copy of $H$. Such  characterizations gave rise to interesting classes of examples, including caterpillar graphs (see~\cite{Caterpillar,KP12}), combs (see~\cite{KPRU13}) and semi-simple perfect matchings (see~\cite{KP13}), and had applications to the structure of the \emph{flip graphs} of the respective structures (see~\cite{Hernando,HHMMN99}).

\medskip \noindent \textbf{Saturation problems.} The subsequent natural step is to study \emph{saturated} $H$-free graphs, which are $H$-free graphs that are maximal \emph{under inclusion}, meaning that the addition of any edge to the graph would lead to containment of a copy of $H$. This direction was initiated in 1964 by Erd\H{o}s, Hajnal and Moon~\cite{EHM64}, who suggested to study the \emph{saturation number} $\sat(n,H)$ -- the minimum number of edges in a saturated $H$-free graph on $n$ vertices. The saturation problem was studied in dozens of papers, see the survey~\cite{CFFS21}. A further step is to study the \emph{saturation spectrum} -- i.e., the set $S(n,H)$ of possible values $\sat(n,H) \leq m \leq \ex(n,H)$ such that there exists a saturated $H$-free graph with $m$ edges. This direction was initiated in 1995 by Barefoot et al.~\cite{BarefootCFFH95} and was studied for various forbidden configurations $H$, including complete graphs~\cite{AminFGS13}, stars and paths~\cite{BalisterD18,FaudreeFGJT17}, odd cycles~\cite{GouldKK24}, etc. In a geometric setting, such a problem was studied by Kyncl et al.~\cite{KynclPRT15}. A natural question in this context, studied with respect to complete graphs~\cite{AminFGS13} and to paths~\cite{BalisterD18}, is characterizing the saturated $H$-free graphs whose number of edges is close to the extremal value $\ex(n,H)$. In addition to the intrinsic interest in this question, such characterizations were shown to be helpful in determining the saturation spectrum (see~\cite{BalisterD18}).    

\medskip \noindent \textbf{From extremal graphs to blockers.} In settings where the extremal size $\ex(n,H)$ is close to ${{n}\choose{2}}$, it is more convenient to pass to the complement graph and to consider \emph{blockers} -- i.e., sets of edges which intersect each copy of $H$. Clearly, the minimal size of a blocker is ${{n}\choose{2}}-\ex(n,H)$. A blocker is called \emph{saturated} if the deletion of any of its edges will transform it into a non-blocker. A blocker is saturated if and only if its complement is a saturated $H$-free graph, and thus, the maximum size of a saturated blocker is ${{n}\choose{2}}-\sat(n,H)$. The problem of characterizing saturated blockers is thus equivalent to the problem of characterizing saturated $H$-free graphs. In the geometric setting, blockers were characterized in the cases where $H$ is a non-crossing perfect matching~\cite{KP12}, a non-crossing spanning tree~\cite{Hernando}, a non-crossing  Hamiltonian path~\cite{KP16}, etc.

\medskip
\noindent \textbf{Blockers for triangulations in convex geometric graphs.} One of the geometric settings in which blockers were studied is \emph{blockers for triangulations in a convex geometric graph} -- namely, the case where $G$ is a graph whose vertices are the $n$ vertices of a convex polygon $P$ in the plane and the forbidden family $\mathcal{T}$ is the family of sets of diagonals which, together with all the boundary edges, form a triangulation of $P$. Triangulations of convex polygons were studied from various points of view, such as finding the optimal triangulation with respect to different criteria (see ~\cite{KT06,Kli80}) and studying the flip graph $\mathcal{T(G)}$ of triangulations (see~\cite{HN99,Lee89}), whose properties are related to deep results in hyperbolic geometry, as shown in the seminal paper of Sleator, Tarjan and Thurston~\cite{STT88}. For more information on triangulations of a convex polygon, see \cite{DRS10}.

The basic problem of studying the minimum size $\ex_B(n,
\mathcal{T})$ of a blocker for triangulations in a convex geometric graph was introduced by Aichholzer et al.~\cite{AichholzerCMFHHHW10} who showed that $\ex_B(n,\mathcal{T})=n-2$. Keller and Stein \cite{KellerS20} obtained the following complete characterization of the family $\mathrm{Ex}_B(n,\mathcal{T})$ of minimum-sized blockers:
\begin{theorem}[\cite{KellerS20}]\label{Thm:MinBlockerIntro}
Any blocker $B$ of size $n-2$ for triangulations of a convex polygon $P=\langle 0,1,\ldots,n-1 \rangle$ is 
(up to a cyclical rotation of $P$) of the form $B = B_1\cup B_2$, where
\[
B_1 = \{(0, 2),(1, 3),(2, 4), \dots ,(m, m + 2)\}, \mbox{ and }
\]
\[
B_2 = \{(m+3,i_1),(m+4,i_2),
(m+5,i_3), \dots ,(n-1, i_{n-3-m})\}, 
\]
for some $1\leq m \leq n-3$ and $1\leq i_j \leq m+1$, such that if $|i_j-i_k|\geq 2$, then the diagonals $(m+j+2, i_j )$ and $(m + k + 2, i_k)$ do not cross.  
\end{theorem}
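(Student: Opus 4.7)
My plan is to combine degree counting with a direct construction of triangulations avoiding $B$, with an induction handling the converse direction. The starting observation is that every vertex $v$ must be incident to at least one diagonal of $B$, for otherwise the fan triangulation from $v$ (consisting of the boundary edges together with the $n-3$ diagonals emanating from $v$) would be unblocked. Together with the identity $\sum_v d(v) = 2|B| = 2n-4$, where $d(v)$ is the number of diagonals of $B$ incident to $v$, this forces most vertices to satisfy $d(v)=1$.

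The main structural step is to isolate the consecutive short-diagonal block $B_1$. Let $S\subseteq B$ denote the set of short diagonals $(i,i+2)$ in $B$. After a cyclic rotation of $P$, fix a maximal consecutive arc $\{(0,2),(1,3),\ldots,(m,m+2)\}\subseteq S$, so that $(n-1,1)\notin B$ and $(m+1,m+3)\notin B$. The key subclaim is that $S$ equals exactly this arc, and that no further diagonal of $B$ is incident to vertex $0$ or to vertex $m+2$ (so $d(0)=d(m+2)=1$). Both are proved by contradiction: a hypothetical violating diagonal gives rise to an explicit triangulation of $P$ disjoint from $B$, obtained by placing an ear at one of the unblocked vertices $0$ or $m+2$ (using $(n-1,1)$ or $(m+1,m+3)$) and extending inward through the polygon. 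Once this is established, $|B_1|=m+1$ and hence $|B_2|=n-3-m$, which matches $|\{m+3,\ldots,n-1\}|$; combined with the degree lower bound, each outer vertex $m+j+2$ then carries exactly one diagonal of $B$, necessarily in $B_2$.

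The final step pins down the endpoints $i_j$ and establishes the non-crossing condition. To force $i_j \in \{1,\ldots,m+1\}$, I would show that any diagonal emanating from an outer vertex to another outer vertex (or to $0$ or $m+2$) can be sidestepped by a coordinated choice of diagonals from the outer arc that bypasses both $B_1$ and $B_2$. For the non-crossing condition, a short direct calculation shows that for $j<k$ with $i_j,i_k\in\{1,\ldots,m+1\}$, the diagonals $(m+j+2,i_j)$ and $(m+k+2,i_k)$ cross precisely when $i_j<i_k$; thus a crossing pair with $|i_j-i_k|\geq 2$ leaves a vertex strictly between $i_j$ and $i_k$ in $\{1,\ldots,m+1\}$ to which one can reroute the outer vertex $m+k+2$, producing an avoiding triangulation. \textbf{The main obstacle} is the uniform construction of these avoiding triangulations across all cases; a clean inductive peeling -- removing one outer vertex at a time, starting from $n-1$ -- should keep the case analysis manageable. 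Finally, the converse direction (every $B=B_1\cup B_2$ of the stated form is indeed a blocker) follows by induction on $n-m$: any triangulation of $P$ either uses the singleton $B_2$-edge at vertex $n-1$, or else contains a diagonal from $n-1$ that splits off a sub-polygon to which the inductive hypothesis applies.
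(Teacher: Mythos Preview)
This theorem is not proved in the present paper: it is quoted from Keller and Stein~\cite{KellerS20} (and independently Ali et al.~\cite{AliCTK22}) and used throughout as a black box. Consequently there is no ``paper's own proof'' here to compare your proposal against.

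That said, a few remarks on your sketch. The opening degree-counting step is correct and standard (it appears already in Aichholzer et al.~\cite{AichholzerCMFHHHW10}), but note that $\sum_v d(v)=2n-4$ with all $d(v)\ge 1$ leaves an excess of $n-4$, so it does \emph{not} by itself force ``most'' vertices to have degree~$1$; in the actual extremal configurations the interior boundary-net vertices $2,\ldots,m$ each have degree at least~$2$. Your real structural leverage has to come from the triangulation constructions, not from the counting. The step you flag as the ``key subclaim'' --- that the short diagonals form a single contiguous arc and that the endpoints $0,m+2$ carry no further diagonals --- is indeed the heart of the matter, and your one-sentence description (``a hypothetical violating diagonal gives rise to an explicit triangulation\ldots'') hides essentially all of the work; in particular, showing that a \emph{second} arc of ear-covers cannot coexist with the first is not a one-line argument. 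Likewise, the claim that an outer-to-outer diagonal can always be ``sidestepped'' needs an explicit construction that simultaneously avoids $B_1$ and all other beams. Your inductive peeling idea for the converse is sound and is close in spirit to how such statements are typically verified. If you want to see the full argument, consult~\cite{KellerS20} directly.
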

In words, such a blocker $B$ consists of two sets of edges:
\begin{itemize}
\item The first is the {\em boundary net} of the blocker, which is a sequence of $m+1$ consecutive diagonals of the form $(i,i+2)$ (called \emph{ear-covers}), which cover the path $\langle 0,1,\ldots,m+2 \rangle$ on the boundary of the polygon.
\item The second is a set of $n-3-m$ leaf edges (called \emph{beams}) that connect each of the vertices $m+3,m+4,\ldots,n-1$ to an internal vertex of the path $\langle 0,1,\ldots,m+2 \rangle$, such that two beams whose endpoints on the path are not consecutive do not cross each other.
\end{itemize}
\cref{Fig:ExBlockers1} illustrates some examples for such blockers, where the corresponding boundary net is drawn by bold lines. 

\medskip

\begin{figure}[h]
    \centering
    \includegraphics[width=\linewidth]{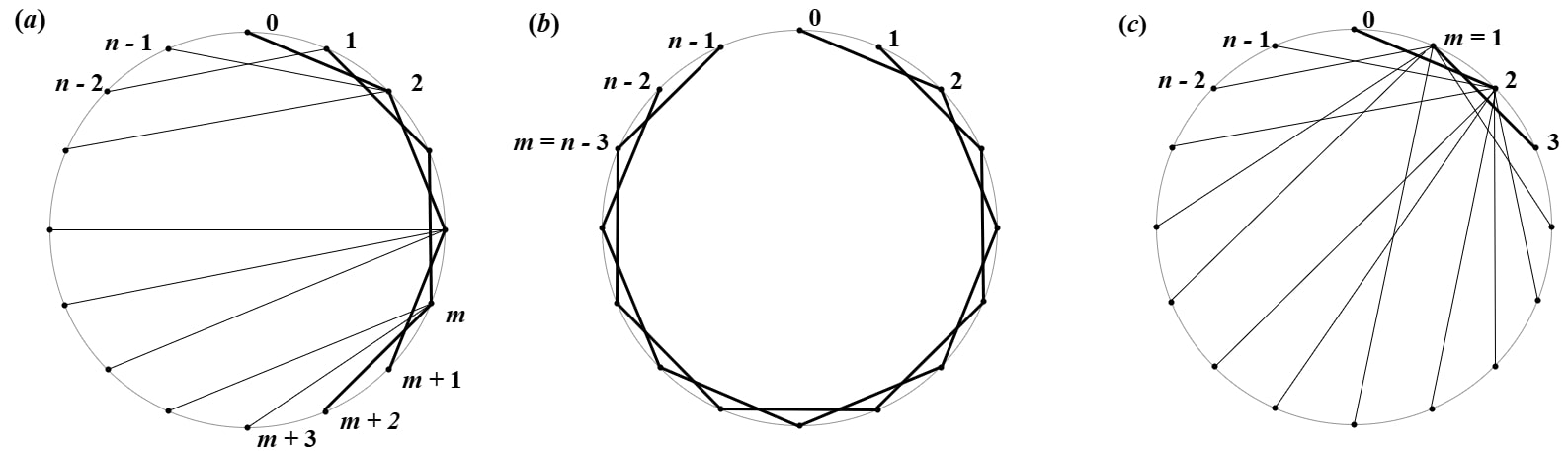}
    \caption{Examples of minimum-sized blockers, where the boundary net is drawn by bold lines. Figure (b) demonstrates a minimum-sized blocker with no beams. Figure (c) demonstrates a minimum-sized blocker with the shortest possible boundary net.} 
 \label{Fig:ExBlockers1}
\end{figure}

\begin{remark}
The same characterization was obtained independently by Ali et al.~\cite{AliCTK22}, who also studied blockers of sizes $n-1$ and $n$. In the characterization of blockers of size $n-1$ in their paper, the saturation condition was not considered. The blockers are divided into two types: the first type contains only blockers which are not saturated, and the second type contains both saturated and non-saturated blockers (see~\cite[Definition 3 and Example 2]{AliCTK22}). 
\end{remark}

\medskip \noindent \textbf{Our results.} In this paper, we initiate the study of the saturation version of this problem -- namely, studying the \emph{saturated blockers} which are not of a minimum size, but are still minimal with respect to inclusion. 

Our first main result concerns the \emph{saturation spectrum} -- namely, the set of values $t$ such that there exists a saturated blocker of size $t$. 
We prove that surprisingly there exist saturated blockers with as many as $g(n)=\binom{n}{2}-O(n\log n)$ edges. This means that the corresponding graph has only $O(n\log n)$ edges, and yet, the addition of any edge to it will make it contain a triangulation. Furthermore, we show that all values between the minimum possible number $n-2$ and $g(n)$ belong to the saturation spectrum, for a sufficiently large $n$. Formally, we show the following:
\begin{theorem}\label{thm:saturation_spectrum}
There exists a universal constant $C>0$ and an integer $n_0$ such that for any integer $n>n_0$ and any $n-2 \leq t \leq \frac{n^2}{2}-C n \log n$, there exists 
a blocker for triangulations in an $n$-gon which is saturated (i.e., minimal with respect to inclusion) and consists of exactly $t$ edges.
\end{theorem}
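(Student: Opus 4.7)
The plan is to construct, for each integer $t$ in the range $[n-2, \binom{n}{2}-Cn\log n]$, an explicit saturated blocker of size exactly $t$. It is convenient to work in the complementary picture: the complement (with respect to all possible diagonals) of a saturated blocker of size $t$ is a saturated triangulation-free convex geometric graph whose number of edges is determined by $t$. So the problem translates into realizing every integer number of edges between the extremal triangulation-free size $\binom{n}{2}-(n-2)$ (achieved by complements of the minimum blockers from \cref{Thm:MinBlockerIntro}) and a sparse endpoint of roughly $Cn\log n$ as a saturated triangulation-free convex geometric graph.

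First I would establish the sparse endpoint by constructing a single saturated triangulation-free convex geometric graph with only $O(n\log n)$ edges. The natural strategy is divide-and-conquer: split the polygon along a diagonal into two nearly-equal halves, recursively construct a sparse saturated structure in each half, and glue the two pieces together with a small number of additional diagonals that jointly block every triangulation that bridges the halves while preserving the property that adding any missing edge completes a triangulation. Summing over the $O(\log n)$ recursion levels yields the $O(n\log n)$ bound, so that its complement is a saturated blocker of size $\binom{n}{2}-O(n\log n)$, as required for the upper endpoint of the spectrum.

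Second, to cover all intermediate sizes, I would design a parameterized family of constructions spanning the interval. A natural idea is to fix a splitting diagonal of the polygon and place a minimum-type blocker (as characterized by \cref{Thm:MinBlockerIntro}) in one sub-polygon and the complement of the sparse saturated triangulation-free construction above in the other, gluing them so that saturation is maintained across the shared diagonal. Varying the position and endpoints of the splitting diagonal gives a coarse scan through many sizes. To fill in the individual integer gaps between successive coarse configurations, I would supplement the family with \emph{local exchange moves} that change the blocker by exactly one edge while keeping it saturated --- for instance, growing or shrinking a dedicated adjustable gadget by one edge, or replacing a single edge by a pair of edges inside a controlled region.

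The principal obstacle I foresee is verifying saturation after every construction and every modification: for each edge $e$ in the blocker one must exhibit a triangulation of the polygon that uses $e$ as its \emph{only} diagonal lying in the blocker. This is especially delicate at the interface between two glued pieces, where the witness triangulations produced on each side must agree along the shared diagonal. Ensuring that such compatible witnesses always exist, and simultaneously that the sizes of the resulting saturated blockers cover every single integer along the path from the sparse extreme down to the minimum blockers of size $n-2$ with no omissions, will be the technical heart of the argument.
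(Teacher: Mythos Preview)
Your plan has the right architecture --- a recursive construction for the extreme and a parameterized family to sweep the middle --- but the specific recursion you propose runs into a concrete obstruction, and you have in fact already put your finger on it. If you split the polygon along a single diagonal $d$ into $P_1,P_2$ and place saturated triangulation-free graphs $G_1,G_2$ in each, then for a missing diagonal $e$ inside $P_1$ the set $G_1\cup\{e\}$ contains a triangulation $T_1$ of $P_1$, but to promote $T_1$ to a triangulation of the whole polygon you need $d$ together with a triangulation of $P_2$ lying in $G_2$ --- and $G_2$ is triangulation-free, so none exists. Any witness for $e$ must therefore use diagonals crossing $d$, which forces many cross-diagonals into $G$; at that point both the $O(n\log n)$ edge count and the triangulation-freeness are in jeopardy. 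The ``gluing'' is not a technical afterthought; with a two-way split it is the whole difficulty, and your proposal offers no mechanism to resolve it.

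The paper sidesteps this with a different decomposition. Working directly with blockers, it partitions the vertices into \emph{four} consecutive arcs $V_R,V_B,V_L,V_T$ and takes as base blocker the union of the two complete bipartite graphs $K(V_R,V_L)\cup K(V_T,V_B)$; this ``quadrilateral'' blocker is saturated and has size $|V_R||V_L|+|V_T||V_B|$. The recursion removes the two extreme $V_R$--$V_L$ edges and nests arbitrary saturated blockers on the sub-polygons over $V_T$ and $V_B$. With $|V_L|=|V_R|=3$ and $|V_T|,|V_B|\approx n/2$ one gets $t(n)\approx (n/2)^2+2\,t(n/2)$, whose solution is $n^2/2-\Theta(n\log n)$; the four-arc geometry is precisely what makes the saturation witnesses on the two sides compatible without ad hoc gluing. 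For the interpolation no local exchange moves are needed either: moving one vertex between adjacent arcs changes $|V_R||V_L|+|V_T||V_B|$ by exactly $1$, so varying arc lengths already yields every integer in a long interval, and a few levels of nesting make these intervals overlap and cover all of $[n-2,\,n^2/2-Cn\log n]$.
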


The proof makes use of several tailor-made families of saturated blockers, obtained by a combination of geometric constructions together with combinatorial fractal structures.

\medskip

Our second main result is a complete characterization of the saturated blockers of size $n-1$, in the spirit of the results of~\cite{AminFGS13,BalisterD18}, which characterized the almost-extremal saturated graphs with respect to forbidden complete graphs and paths, respectively. 

Our characterization involves three mini-subgraphs, which we call \emph{seagull, butterfly}, and \emph{bouquet}, presented in \cref{Fig:Bouquet1}. 

\medskip

\begin{figure}[h]
    \centering
    \includegraphics[width=\linewidth]{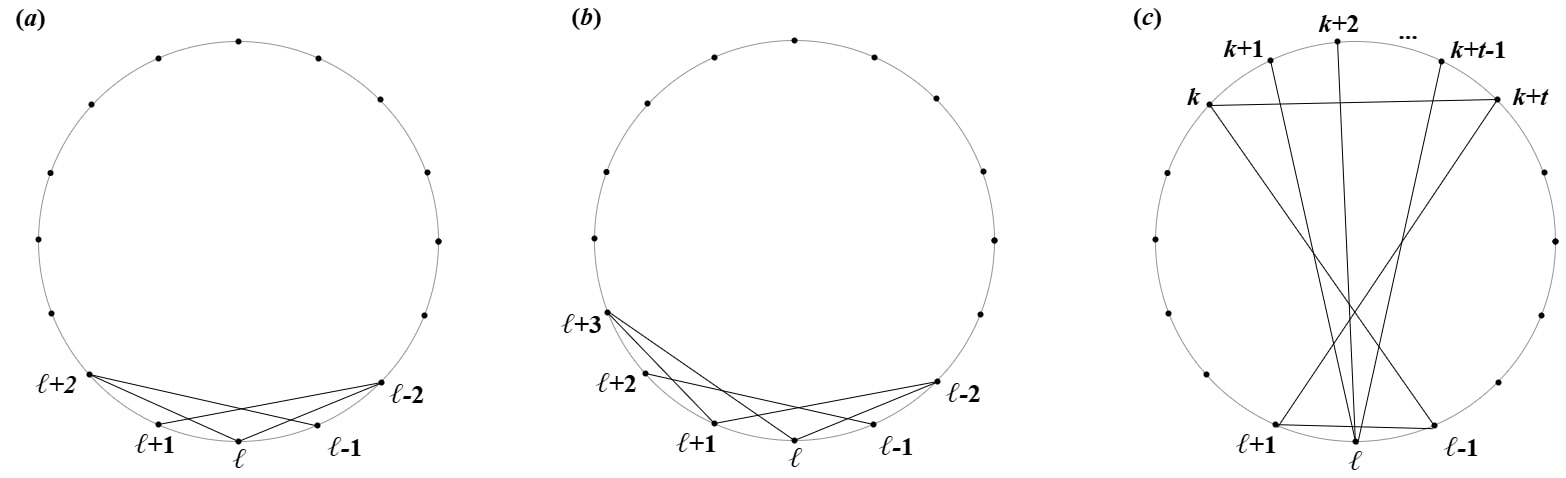}
    \caption{(a) Example of a \emph{seagull} subgraph; (b) Example of a \emph{butterfly} subgraph; (c) Example of a \emph{bouquet} subgraph.} \label{Fig:Bouquet1}
\end{figure}

The complete characterization of the saturated blockers of size $n-1$ is the following:
\begin{theorem}\label{Thm:Main-Intro}
For $n>5$, any blocker for triangulations of a convex $n$-gon having $n-1$ edges
is of one of the following three types (see \cref{Fig:Thm2_Blockers}):
\addtolength{\leftmargini}{2cm}
\begin{itemize}
    \item[\bf{Type 1:}] $B=\Sea\cup B_1\cup B_2$, where for some $2<\ell<m$,
    $$\Sea=\{(\ell-2,\ell),(\ell-2,\ell+1),(\ell-1,\ell+2),(\ell,\ell+2)\}$$  is a ``seagull'' subgraph, $$B_1=\{(0,2),\dots,(\ell-3,\ell-1),(\ell+1,\ell+3),\dots,(m,m+2)\}$$ is an incomplete boundary net, and $B_2 = \{ (m+3,i_1), (m+4,i_2),\dots,(n-1,i_{n-3-m})\}$  is a set of beams for some $4\leq m\leq n-3$, $i_j\in\{1,\dots,\ell-1,\ell+1,\dots,m+1\}$ such that if $|i_{j_1}-i_{j_2}|\geq 2$, then $(m+2+j_1,i_{j_1})$ and $(m+2+j_2,i_{j_2})$ do not cross.

\medskip

    \item[\bf{Type 2:}] $B=\Buf\cup B_1\cup B_2$, where for some $2<\ell<m-1$, $$\Buf=\{(\ell-2,\ell),(\ell-2,\ell+1),(\ell-1,\ell+2),(\ell,\ell+3),(\ell+1,\ell+3)\}$$ is a ``butterfly'' subgraph, $$B_1=\{(0,2),\dots,(\ell-3,\ell-1),(\ell+2,\ell+4),\dots,(m,m+2)\}$$ is an incomplete boundary net, and $B_2 = \{ (m+3,i_1), (m+4,i_2),\dots,(n-1,i_{n-3-m})\}$  is a set of beams for some $5\leq m\leq n-3$, $i_j\in\{1,\dots,\ell-1,\ell+2,\dots,m+1\}$ such that if $|i_{j_1}-i_{j_2}|\geq 2$, then $(m+2+j_1,i_{j_1})$ and $(m+2+j_2,i_{j_2})$ do not cross.

\medskip

    \item[\bf{Type 3:}] $B=\Bouq\cup B_1\cup B_2$, where $$\Bouq=\{(\ell-1,\ell+1),(k,\ell-1),(k+t,\ell+1),(k,k+t), (k+1,\ell),\dots,(k+t-1,\ell)\}$$ is a ``bouquet'' subgraph, $B_1=B_{\rm 1,L}\cup B_{\rm 1,R}$ is an incomplete boundary net,  and $B_2 = B_{\rm 2,L}\cup B_{\rm 2,R}$ is  a set of beams, where:
    \begin{itemize}
    \item[$\bullet$] $0<\ell<m+2< k< k+1< k+t\leq n-1$;
    \item[$\bullet$] $B_{\rm 1,L}=\left\{ \begin{array}{ll}\{(\ell,\ell+2),\dots,(m,m+2)\},& \ell<m+1\\ \emptyset, & \ell=m+1\end{array}\right.$; \vspace{5pt}\\
    $B_{\rm 1,R}=\left\{\begin{array}{ll}\{(0,2),\dots,(\ell-2,\ell)\}, & \ell>1 \\ \emptyset, & \ell=1 \end{array}\right.$;
    
    \item[$\bullet$]  $B_{\rm 2,L}=\left\{ \begin{array}{ll}\{ (m+3,i_1), \dots,(k-1,i_{k-m-3})\}, & k>m+3 \\\emptyset, & k=m+3\end{array}\right.$;\vspace{5pt}\\ 
    $B_{\rm 2,R}=\left\{\begin{array}{ll}\{(k+t+1,i_{k+t-m-1}),\dots,(n-1,i_{n-3-m})\}, & k+t<n-1 \\\emptyset, & k+t=n-1\end{array}\right.$;
    
    \item[$\bullet$] $i_1,\dots,i_{k-m-3}\in\{\ell,\dots,m+1\}$, $i_{k+t-m-1},\dots,i_{n-3-m}\in\{1,\dots,\ell\}$;
    
    \item[$\bullet$] if $|i_{j_1}-i_{j_2}|\geq 2$, then $(m+2+j_1,i_{j_1})$ and $(m+2+j_2,i_{j_2})$ do not cross.
    \end{itemize}
\end{itemize}
\end{theorem}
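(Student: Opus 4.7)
The strategy is to compare $B$ with the characterization of minimum-sized blockers in \cref{Thm:MinBlockerIntro}, exploiting the fact that $|B|=n-1$ is only one more than the extremal size $n-2$. I would first establish a local \emph{ear-blocking constraint}: for each boundary vertex $v$, the only diagonal that blocks triangulations having $v$ as an ear is the short diagonal $(v-1,v+1)$; hence if $(v-1,v+1)\notin B$ then $B$ must contain other edges incident to $v-1$, $v$, or $v+1$ that prevent $v$ from being an ear of any triangulation disjoint from $B$. Using this, I would define the \emph{boundary net} of $B$ as a maximal run of consecutive ear-covers in $B$, and split the analysis according to the ``defect'' in this run — that is, the local way in which $B$ deviates from the boundary-net-plus-beams structure of \cref{Thm:MinBlockerIntro}.

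The case analysis then distinguishes the three types geometrically by the shape of the defect. In the first case, exactly one ear-cover $(\ell-1,\ell+1)$ is missing, with the neighbouring ear-covers $(\ell-2,\ell)$ and $(\ell,\ell+2)$ both present; the triangulations having $\ell$ as an ear must then be blocked by new edges incident to $\ell-1,\ell,\ell+1$, and combined with the size and saturation constraints this forces the seagull subgraph $\Sea$ of Type 1. In the second case, two consecutive ear-covers are missing and the surrounding ear-covers are present, and the minimal saturated replacement producing $|B|=n-1$ turns out to be the butterfly $\Buf$ of Type 2. In the third case, the missing ear-cover $(\ell-1,\ell+1)$ is compensated not by edges near $\ell$ but by a remote fan of edges from a block of vertices $k,k+1,\ldots,k+t$, which together must form the bouquet $\Bouq$ of Type 3. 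In each case, the remaining edges of $B$ outside the defect split into boundary-net segments and beams, with crossing restrictions inherited from \cref{Thm:MinBlockerIntro}.

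For each of the three candidate structures I would verify that it is indeed a saturated blocker of size $n-1$: the edge count is immediate from the description; the blocker property follows from a short case analysis showing that every triangulation of the polygon must use at least one edge of $B$, by cutting the polygon along the diagonals of the defect and reducing to the minimum-blocker situation on each side; the saturation property is verified by constructing, for each edge $e\in B$, an explicit triangulation $T_e$ meeting $B$ only at $e$.

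The hard part is the non-local Case 3: since the bouquet is spatially far from the gap $(\ell-1,\ell+1)$ in the boundary net, one must argue that the only non-local pattern compatible with the size and saturation assumptions is the bouquet, with its particular combination of the ``rim'' edges $(k,\ell-1)$, $(k+t,\ell+1)$, $(k,k+t)$ and the ``fan'' $(k+1,\ell),\dots,(k+t-1,\ell)$. The main technical challenge here is ruling out alternative spread-out configurations — for example, several small fans at different positions, a fan of a different shape, or a fan aimed at the wrong target vertex — that at first glance seem to block the missing ear-cover; tracing through the geometry, one finds that each such alternative either fails to block some triangulation through the gap or contains an edge whose removal still leaves a blocker, contradicting saturation. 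A secondary difficulty is excluding near-minimum local defects that resemble the seagull or butterfly but contain a redundant edge, which would again make $B$ non-saturated.
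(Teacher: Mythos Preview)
Your approach differs substantially from the paper's, and your treatment of Type~3 contains a genuine structural error.

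In Type~3 the ear-cover $(\ell-1,\ell+1)$ is \emph{not} missing: it is the first edge listed in $\Bouq$, and together with $B_{\rm 1,L}\cup B_{\rm 1,R}$ it yields a \emph{complete} run of ear-covers $(0,2),\ldots,(m,m+2)$. The defect in Type~3 lies entirely in the beam region, at the vertices $k,\ldots,k+t$: instead of one beam per vertex one finds the conflicting pair $(k,\ell-1),(k+t,\ell+1)$ together with the ``rim'' edge $(k,k+t)$. Consequently your case split, organised around which ear-covers are missing from the boundary net, cannot detect Type~3 at all, and your ``remote fan compensating a missing ear-cover at $\ell$'' picture is simply not what Type~3 is. A related gap is that you never establish that $B$ possesses a long boundary net to begin with; without a reduction to \cref{Thm:MinBlockerIntro}, this is not automatic from $|B|=n-1$.

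The paper sidesteps both problems by a reduction step you are missing. It first observes (\cref{Obs:Deg2_non-covered,Cor:CanConstruct}) that if an $(n,n-1)$-blocker has a non-covered vertex $v$ with $\deg_B(v)=2$, then $B\setminus\{v\}$ is a minimum-sized blocker of the $(n{-}1)$-gon. Since minimum blockers are completely described by \cref{Thm:MinBlockerIntro}, the whole classification reduces to: given a minimum blocker, where can one insert a new vertex $v$ and two incident edges so that the result is saturated? The four possible positions for $v$ (outside the boundary net, at an end, next to an end, or in the interior) are handled in \cref{Claim:Beam,Claim:EndNeighbours,Claim:NearEndNeighbours,Claim:MiddleBNNeighbours}, and the three Types drop out --- in particular, the bouquet arises precisely when $v$ is inserted \emph{outside} the boundary net. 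The genuinely hard step is then not the bouquet geometry you flag, but proving that such a $v$ always exists (\cref{Lem:NotMin} and the closing argument), i.e.\ ruling out $(n,n-1)$-blockers all of whose non-covered vertices have degree~$1$.
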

In words, each saturated blocker for triangulations of size $n-1$ looks like a modification of a minimum-sized blocker having $n-2$ edges, with some subgraph of $k$ edges replaced by a ``seagull'', a ``butterfly'' or a ``bouquet'' of size $k+1$, see the different parts of \cref{Fig:Thm2_Blockers}. 

\begin{figure}[h]
    \centering
    \includegraphics[width=\linewidth]{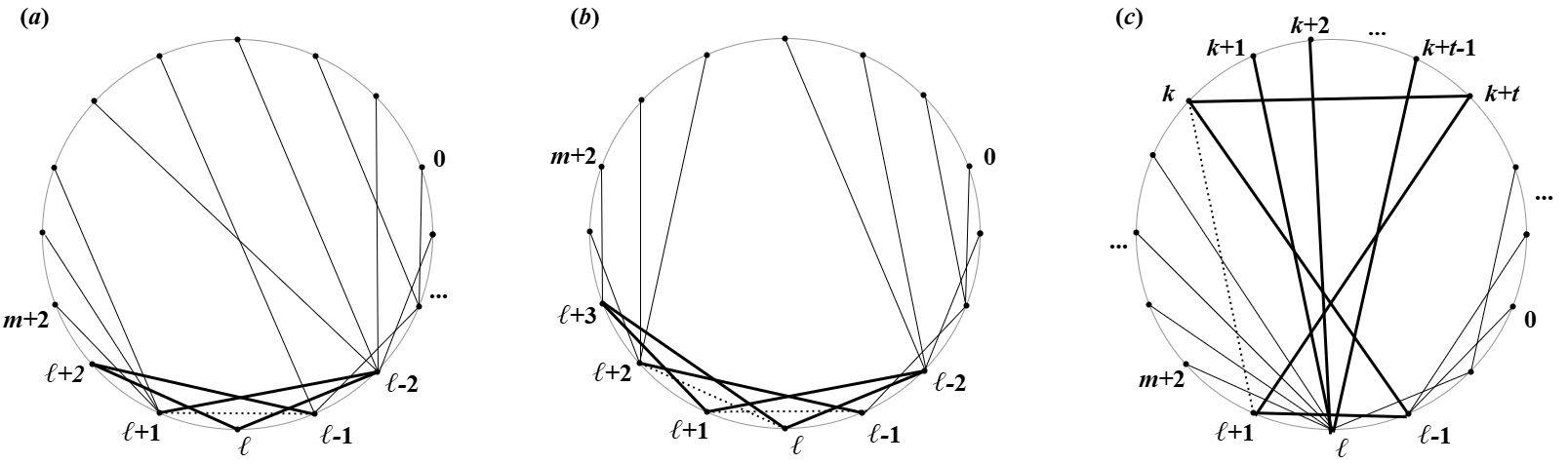}
    \caption{Examples of blockers for triangulations of a convex $n$-gon having $n-1$ edges: Part (a) demonstrates a blocker of Type 1. Part (b) demonstrates a blocker of Type 2. Part (c) demonstrates a blocker of Type 3 with $|B_{\rm 1,L}|=1$, $|B_{\rm 1,R}|=2$, $|B_{\rm 2,L}|=3$, $|B_{\rm 2,R}|=2$. The special subgraphs are drawn by bold lines, and the removed edges (as described after \cref{cor:deleted_edges}) are drawn by dotted lines.  
    } \label{Fig:Thm2_Blockers}
\end{figure}

Our characterization implies a \emph{stability result}, which seems to be of independent interest:
\begin{corollary}\label{cor:deleted_edges}
    Let $B$ be a saturated blocker for triangulations of a convex polygon with $|E(B)|=n-1$. Then there exists a (minimum-sized) blocker $B'$ having $n-2$ edges, such that the symmetric difference of $E(B),E(B')$ satisfies $|E(B) \triangle E(B')| \leq 5$.
\end{corollary}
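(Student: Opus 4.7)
The plan is to leverage the structural description provided by \cref{Thm:Main-Intro}. Given a saturated blocker $B$ with $|E(B)|=n-1$, that theorem forces $B$ to be of Type 1, 2, or 3, and in each case $B$ decomposes as a special subgraph (seagull, butterfly, or bouquet) together with an incomplete boundary net $B_1$ and a set of beams $B_2$. The key observation is that each special subgraph consists of a small number of ear-covers plus a few ``extra'' non-ear-cover edges, and the missing ear-covers of $B_1$ are precisely those needed to complete the net into the form prescribed by \cref{Thm:MinBlockerIntro}. Thus, for each type I will construct $B'$ by deleting the extra edges and re-inserting the missing ear-covers (and, in Type 3, additionally straightening the bouquet beams so that they all terminate at a common endpoint).

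Concretely, for Type 1 the seagull occupies the spot of the single missing ear-cover $(\ell-1,\ell+1)$; replacing the four seagull edges by the three consecutive ear-covers $(\ell-2,\ell),(\ell-1,\ell+1),(\ell,\ell+2)$ amounts to removing $(\ell-2,\ell+1)$ and $(\ell-1,\ell+2)$ and adding $(\ell-1,\ell+1)$, producing a symmetric difference of $3$. For Type 2 the butterfly covers two missing ear-covers $(\ell-1,\ell+1)$ and $(\ell,\ell+2)$; replacing it by the four consecutive ear-covers $(\ell-2,\ell),(\ell-1,\ell+1),(\ell,\ell+2),(\ell+1,\ell+3)$ removes three non-ear-cover edges and adds two ear-covers, giving symmetric difference $5$. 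For Type 3 I will replace the bouquet by the ear-cover $(\ell-1,\ell+1)$ together with the $t+1$ straight beams $(k,\ell),(k+1,\ell),\dots,(k+t,\ell)$; since the bouquet already contains $(\ell-1,\ell+1)$ and the beams $(k+1,\ell),\dots,(k+t-1,\ell)$, this amounts to deleting the three ``extra'' edges $(k,\ell-1),(k+t,\ell+1),(k,k+t)$ and adding the two beams $(k,\ell)$ and $(k+t,\ell)$, again yielding symmetric difference $5$.

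It then remains to verify that each candidate $B'$ is genuinely a minimum-sized blocker as described in \cref{Thm:MinBlockerIntro}. The edge count is immediate in each case ($n-2$), and by construction the boundary net of $B'$ is the full sequence of ear-covers $(0,2),(1,3),\dots,(m,m+2)$. The hard part will be checking the non-crossing condition for the beams of $B'$. For Types 1 and 2 the beam set $B_2$ is inherited unchanged from $B$, so the condition carries over directly. For Type 3 the newly-added straight beams to $\ell$ must be checked against the beams of $B_{\rm 2,L}$ and $B_{\rm 2,R}$: since each beam of $B_{\rm 2,L}$ has both endpoints in $\{\ell,\dots,m+1\}\cup\{m+3,\dots,k-1\}$, both of these endpoints lie on the same arc determined by any diagonal $(v,\ell)$ with $v\in\{k,\dots,k+t\}$, so no crossing can occur; the argument for $B_{\rm 2,R}$ is symmetric. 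Once this geometric check is complete, the bound $|E(B)\triangle E(B')|\leq 5$ follows immediately from the edge counts recorded above.
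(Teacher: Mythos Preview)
Your proof is correct and follows essentially the same approach as the paper's: both derive the corollary directly from the classification in \cref{Thm:Main-Intro} by locally editing the seagull/butterfly/bouquet subgraph into the corresponding portion of a minimum-sized blocker and counting the edges changed. The only minor difference is in Type~3, where the paper keeps the edge $(k+t,\ell+1)$ and replaces just $\{(k,\ell-1),(k,k+t)\}$ by a single beam from $k$, achieving symmetric difference $3$ rather than your $5$---but since the corollary only asserts the bound $\le 5$, your variant is equally valid.
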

Indeed, it can be seen from the characterization of Theorem~\ref{Thm:Main-Intro} that each saturated blocker $B$ having $n-1$ edges is obtained from some minimum-sized blocker $B'$ by one of the following two constructions:
\begin{enumerate}
    \item[(1)] Removing one edge of $B'$ (which is, in the notations of \cref{Fig:Bouquet1}(a,c), $(\ell-1,\ell+1)$ in the case of a seagull, and some beam from the vertex $k$ in the case of a bouquet), and adding two new edges (which are $(\ell-1,\ell+2),(\ell+1,\ell-2)$ in the case of a seagull, and $(k,\ell-1),(k,k+t)$ in the case of a bouquet).

    \item[(2)] Removing two consecutive ear-covers of $B'$ (which are, in the notations of \cref{Fig:Bouquet1}(b), $(\ell,\ell+2),\break(\ell-1,\ell+1) $ in the case of a butterfly), and adding the edges $(\ell,\ell+3),(\ell+2,\ell-1),(\ell+1,\ell-2)$.
\end{enumerate}

\medskip

The paper is organized as follows.  
Section \ref{Sec:Prelim} presents definitions, notations and simple observations that will be used throughout the paper.
In Section \ref{sec:spectrum}, we prove that almost all the range between $n-2$ and $\binom{n}{2}$ is contained in the saturation spectrum of blockers for triangulations. 
In Section \ref{Sec:Full_Char}, we prove \cref{Thm:Main-Intro} which provides a complete characterization of the $(n,n-1)$-blockers for $n>6$. The $(n,n-1)$-blockers for $n \leq 6$ are characterized in \cref{app:small_cases}.

%=============================================

\section{Preliminaries and notations}
\label{Sec:Prelim}
In this section, we present definitions, notations and simple observations that will be used throughout the paper.

\medskip \noindent \textbf{Standard definitions.} Let $G$ be the complete geometric graph on a set $P$ of $n$ vertices, realized in the plane as the vertex set of a convex polygon $C$. 
The \emph{degree} of a vertex $v\in V(G)$, denoted by $\deg(v)$, is the number of edges that emanate from $v$. 
The degree of $v$ with respect to a subgraph $B$ of $G$ is denoted by $\deg_B(v)$. We say that two edges $e_1, e_2 \in E(G)$ {\em cross} or {\em intersect,} if they share an interior point.
The {\em order} of an edge $e=(i,j)\in E(G)$, denoted by $o(e)$, is $o(e)=\min\{|i-j|,n-|i-j|\}$.
Note that the edges of the $n$-gon $C$ are all of order 1. A {\em diagonal} of $C$ is an edge of $G$ of order $\geq 2$. 
A diagonal of $C$ of order $2$ is called an {\em ear-cover}. We say that the ear-cover $(i-1,i+1)$ {\em covers} the vertex $i$.

A \emph{triangulation} of a convex polygon $C$ is a maximal-for-inclusion pairwise non-crossing set of diagonals of $C$.  
Let $\TriF$ denote the family of triangulations of $C$. Note that
any triangulation of an $n$-gon contains $n-3$ diagonals.

\medskip \noindent \textbf{$(n,k)$-Blockers.}
As was described in the introduction, a set $B$ of edges in $G$ is a {\em blocker} for $\TriF$ (or simply, a blocker) if it contains an edge in common with every element of $\TriF$.
We define {\em $(n,k)$-blockers} as the \emph{saturated} blockers of size $k$ for triangulations of a convex $n$-gon $C$. This means that no edge can be removed from an $(n,k)$-blocker such that the remaining set will be a blocker.
Note that the $(n,n-2)$-blockers are the minimum-sized blockers for triangulations of a convex $n$-gon, as was proved in \cite{AichholzerCMFHHHW10}. 

\medskip \noindent \textbf{Vertex deletion.}
We denote by $C\setminus\{i\}$ the polygon obtained from $C$ by deleting the vertex $i$ and adding the
edge $(i-1, i+1)$, and by $B\setminus\{i\}$ the restriction of a blocker $B$ to a blocker of $C\setminus\{i\}$ (i.e,
$B\setminus\{i\}=B\setminus\{(i,j)\mid (i,j)\in E(B),\ 0\leq j\leq n-1\} {\setminus \{(i-1,i+1)\}}$ ).

\medskip \noindent \textbf{Conflicting edges.}
In the graph containing a boundary net $B_1$ as in \cref{Thm:MinBlockerIntro}, we call the beams $(m+2+j,i_{j})$ and $(m+2+k,i_k)$ {\em conflicting}, if $j<k$, and $i_k\geq i_j+ 2$ (see \cref{Fig:Obs13_Def14}(a)).

\begin{figure}[h]
    \centering
    \includegraphics[width=0.6\linewidth]{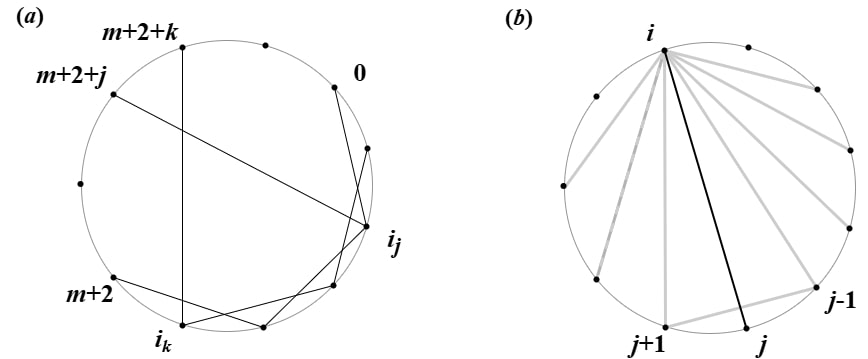}
    \caption{Part (a) demonstrates an example of conflicting edges. Part (b) demonstrates a triangulation $T$ (drawn by gray lines) from \cref{Obs:Covered} for the graph with the edge $(i,j)$, where ${\rm deg}(i)=1$, and $j$ is uncovered.
    } \label{Fig:Obs13_Def14}
\end{figure}

\cref{Thm:MinBlockerIntro} above supplies a complete characterization of all $(n,n-2)$-blockers, which are also the minimum-sized blockers for triangulations.

According to that theorem, in a minimum-sized blocker $B$, there are no conflicting beams. In an $(n,n-1)$-blocker, however, conflicting beams are possible, see e.g. Figure \ref{Fig:Thm2_Blockers}(c).

\medskip 

We shall use the following simple observations. The first two of them were proved in~\cite{AichholzerCMFHHHW10}.

\begin{observation}[\cite{AichholzerCMFHHHW10}]\label{Obs:Isolated}
    No blocker for $\TriF$ contains an isolated vertex.
\end{observation}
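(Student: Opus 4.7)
My plan is to prove the contrapositive by exhibiting a concrete triangulation that is disjoint from any edge set which isolates a vertex. The natural candidate is the \emph{fan triangulation} centered at the isolated vertex.

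Let $B$ be a subgraph of $G$ and suppose, toward a contradiction, that some vertex $v \in V(G)$ satisfies $\deg_B(v) = 0$. I would consider the set
\[
T_v = \{(v,w) : w \in V(G),\ w \neq v,\ w \neq v-1,\ w \neq v+1\},
\]
where indices are taken modulo $n$. This is the set of all diagonals of $C$ emanating from $v$, and it has exactly $n-3$ elements. The edges of $T_v$ are pairwise non-crossing (they share the endpoint $v$ and are otherwise disjoint inside the polygon), and together with the boundary edges they decompose $C$ into $n-2$ triangles of the form $\langle v, i, i+1 \rangle$. Hence $T_v \in \TriF$.

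Because $v$ is isolated in $B$, no edge incident to $v$ belongs to $B$; in particular, $T_v \cap E(B) = \emptyset$. Thus $B$ fails to intersect the triangulation $T_v$, contradicting the assumption that $B$ is a blocker for $\TriF$. This contradiction shows that no blocker can have an isolated vertex.

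I do not foresee any real obstacle here: the only point requiring a (minor) verification is that $T_v$ is indeed a triangulation, i.e.\ a maximal non-crossing set of diagonals, which is immediate from convexity. The argument uses nothing beyond the definitions already given in \cref{Sec:Prelim}.
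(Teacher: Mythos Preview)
Your argument is correct: the fan triangulation $T_v$ at an isolated vertex $v$ is indeed a triangulation of $C$ missed by $B$, and this is exactly the standard proof. The paper itself does not give a proof of this observation but merely cites~\cite{AichholzerCMFHHHW10}; your argument is the same as the one given there.
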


\begin{observation}[\cite{AichholzerCMFHHHW10}]\label{Obs:MinB_n-2}
    The size of each blocker for the family of triangulations $\TriF$ of a convex $n$-gon $C$ is at least $n-2$.
\end{observation}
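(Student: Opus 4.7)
My plan is to prove $|B|\ge n-2$ by induction on $n$. The base case $n=4$ is immediate: the convex $4$-gon has exactly two triangulations, $\{(0,2)\}$ and $\{(1,3)\}$, so any blocker must contain both diagonals, giving $|B|\ge 2=n-2$.

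For the inductive step, let $B$ be a blocker for the convex $n$-gon $C$ with $n\ge 5$. Without loss of generality I may assume that $B$ is saturated (minimal for inclusion), since removing edges only strengthens the claim. By \cref{Obs:Isolated} (equivalently, by blocking the fan triangulation at each vertex) every vertex has positive $B$-degree. I would split into cases according to whether $B$ contains an ear-cover.

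In the \emph{ear-cover case}, suppose $(i-1,i+1)\in B$, and consider $C':=C\setminus\{i\}$ together with the restriction $B':=B\setminus\{i\}$ defined in the Preliminaries; note that $|B'|\le |B|-1$ because the ear-cover itself is stripped away. The key technical step is to verify that $B'$ is still a blocker for $C'$. The useful structural observation is that any triangulation $T'$ of $C'$ admits two canonical extensions to a triangulation of $C$: the extension $T_a=T'\cup\{(i-1,i+1)\}$ using the ear-cover, and the extension $T_b=T'\cup\{(i,j)\}$, where $j$ is the third vertex of the unique triangle of $T'$ incident to the boundary edge $(i-1,i+1)$ of $C'$ (so the quadrilateral $(i-1,i,i+1,j)$ can be re-triangulated via the diagonal $(i,j)$). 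Since $B$ blocks both $T_a$ and $T_b$, one deduces that either $T'\cap B'\ne\emptyset$ or else $(i,j)\in B$; in the latter case, I would use the saturation of $B$ (via witness triangulations for the ear-cover and for $(i,j)$) to derive a contradiction or to pick a different ear-cover for the reduction. Once $B'$ is established as a blocker of $C'$, the induction hypothesis yields $|B'|\ge (n-1)-2=n-3$, and hence $|B|\ge n-2$.

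In the \emph{no-ear-cover case}, every edge of $B$ has order at least $3$, while every vertex still has positive $B$-degree; combined with a double-counting across the $n$ fan triangulations, this structural rigidity should directly force $|B|\ge n-2$. I expect the main obstacle to lie in the ear-cover case, since when an ear-cover is the \emph{sole} blocker edge on some triangulation $T_a$, the naive restriction $B'$ is not a blocker of $C'$; handling this scenario rigorously will require the two-extension argument above together with a careful choice of which ear-cover to delete.
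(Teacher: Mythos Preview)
The paper does not prove this observation; it simply cites \cite{AichholzerCMFHHHW10}. So there is no proof in the paper to compare your attempt against, and I will just assess your argument on its own.

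Your case split is backwards, and this is where the gap lies. You delete a vertex $i$ whose ear-cover $(i-1,i+1)$ \emph{is} in $B$; this is precisely the situation in which the restriction $B'=B\setminus\{i\}$ can fail to be a blocker for $C\setminus\{i\}$, because the ear-cover may be the only edge of $B$ meeting the triangulation $T'\cup\{(i-1,i+1)\}$. You acknowledge the problem and sketch a ``two-extension'' repair together with a vague appeal to saturation (``derive a contradiction or pick a different ear-cover''), but neither step is actually carried out, and it is not clear it can be made to work in general. Your ``no-ear-cover'' case is likewise only a promissory note (``should directly force $|B|\ge n-2$'').

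The clean inductive argument runs in the opposite direction. Either every vertex is covered by an ear-cover of $B$, in which case $B$ contains all $n$ ear-covers and $|B|\ge n>n-2$; or some vertex $i$ has $(i-1,i+1)\notin B$. In the latter case, for any triangulation $T'$ of $C\setminus\{i\}$ the set $T'\cup\{(i-1,i+1)\}$ is a triangulation of $C$, and since $B$ blocks it while $(i-1,i+1)\notin B$, some diagonal of $T'$ lies in $B$ and hence in $B\setminus\{i\}$. Thus $B\setminus\{i\}$ is a blocker for the $(n-1)$-gon, so by induction $|B\setminus\{i\}|\ge n-3$; combined with $\deg_B(i)\ge 1$ from \cref{Obs:Isolated}, this gives $|B|=|B\setminus\{i\}|+\deg_B(i)\ge n-2$. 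Choosing a \emph{non}-covered vertex makes the restriction automatically a blocker and removes all the difficulties you were anticipating.
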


\begin{observation}\label{Obs:Covered}
    If the vertex $i$ is connected only to $j$ in some $(n,k)$-blocker $B$, then $B$ contains the ear-cover $(j-1,j+1)$.
\end{observation}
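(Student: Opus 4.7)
The plan is to exploit the saturation of $B$ by exhibiting a specific triangulation that has $(i,j)$ as its only edge in $B$, and then performing a single edge flip to force a second edge of $B$ to appear. First, I observe that since $B$ is saturated, every edge of $B$ must be a diagonal (a boundary edge is disjoint from every triangulation, hence could be removed without destroying the blocking property), so $(i,j)$ is a diagonal, meaning $j\neq i\pm 1$.

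The key step is to choose a triangulation which uses the edge $(i,j)$ and in which $i$ has maximum degree: the \emph{fan from $i$}, namely
$$T \;=\; \{(i,k) : k \in \{0,1,\dots,n-1\}\setminus\{i-1,i,i+1\}\}.$$
This is a triangulation on $n-3$ diagonals, and every diagonal of $T$ is incident to $i$. Since $\deg_B(i)=1$ with unique neighbor $j$, and $(i,j)\in T$, this immediately gives $T\cap B = \{(i,j)\}$.

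Now I would flip the edge $(i,j)$. In the fan $T$, the two triangles incident to $(i,j)$ are $(i,j-1,j)$ and $(i,j,j+1)$, whose union is the convex quadrilateral $(i,j-1,j,j+1)$. Replacing $(i,j)$ by the other diagonal of this quadrilateral produces a new triangulation
$$T' \;=\; \bigl(T\setminus\{(i,j)\}\bigr)\cup\{(j-1,j+1)\}.$$
Since $B$ is a blocker, $T'\cap B\neq\emptyset$; but the edges of $T'$ other than $(j-1,j+1)$ all lie in $T\setminus\{(i,j)\}$, which is disjoint from $B$. Hence $(j-1,j+1)\in B$, which is precisely the ear-cover covering $j$.

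There is no substantial obstacle; the only point requiring a moment of thought is selecting the fan from $i$ (rather than an arbitrary triangulation guaranteed by saturation), so that the intersection with $B$ is pinned down exactly to $\{(i,j)\}$ and the flip is forced to land on the specific diagonal $(j-1,j+1)$.
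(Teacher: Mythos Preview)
Your proof is correct and essentially the same as the paper's: the triangulation $T'$ you obtain after the flip is exactly the triangulation the paper writes down directly, namely $\{(j-1,j+1)\}\cup\{(i,k):k\neq i-1,i,i+1,j\}$, and both arguments conclude by noting that $B$ must meet this triangulation in its unique non-fan edge $(j-1,j+1)$. Your presentation is a bit more detailed (you make explicit that saturation forces $(i,j)$ to be a diagonal, and you frame the construction as a flip from the fan), but the underlying idea is identical.
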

\begin{proof}
Otherwise, $B$ misses the triangulation $T=\{(j-1,j+1), (i,j+1),(i,j+2),\dots,(i,j-1)\}$, see \cref{Fig:Obs13_Def14}(b).
\end{proof}

%======================
\section{The saturation spectrum}\label{sec:spectrum}

In this section, we prove  Theorem \ref{thm:saturation_spectrum} above, that almost all the range of values between $n-2$ and $\binom{n}{2}$ is contained in the saturation spectrum of blockers for triangulations. We recall its statement: 

\medskip

\noindent \textbf{Theorem 
\ref{thm:saturation_spectrum}.}
    There exists a universal constant $C>0$ and an integer $n_0$ such that for any integer $n>n_0$ and $n-2 \leq t \leq \frac{n^2}{2}-C n \log n$, there exists an $(n,t)$-blocker.

\medskip
 
Clearly, the lower bound is tight by \cref{Obs:MinB_n-2}. The upper bound is only $(C n \log n)$-far from the total number of edges in the complete graph on $n$ vertices.

We prove this theorem constructively, by building an $(n,t)$-blocker for $t$ incrementally growing from $t=n-2$ to $t=\binom{n}{2}-Cn\log n$. 

\subsection{The building blocks}

We first present the two building blocks which we repeatedly use in our constructions. The first building block is a specific type of a saturated blocker, which we call a \emph{quadrilateral} blocker.
\begin{proposition}\label{Obs:Quadrilateral}
In a convex $n$-gon, we divide the set of vertices into four consecutive non-empty subsets: $V=V_R\cup V_B\cup V_L\cup V_T$, such that for some $0<a<b<c<n-1$, $V_R=\{0,\dots,a-1\}$, $V_B=\{a,\dots,b-1\}$, $V_L=\{b,\dots,c-1\}$, and $V_T=\{c,\dots,n-1\}$. Then the graph $B_Q$ whose edge set consists of all possible edges $(i,j)$ where $i\in V_R$, $j\in V_L$, or where $i\in V_T$, $j\in V_B$, is an $(n,m)$-blocker for $m=|V_R|\cdot|V_L|+|V_T|\cdot|V_B|$. 
\end{proposition}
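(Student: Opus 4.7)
The statement bundles three claims: the edge count, the blocker property, and saturation. The size identity is immediate, since the four arcs $V_R,V_B,V_L,V_T$ are pairwise disjoint, so the set of edges joining $V_R$ to $V_L$ and the set joining $V_T$ to $V_B$ are disjoint, giving $|E(B_Q)|=|V_R|\cdot|V_L|+|V_T|\cdot|V_B|$.

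For the blocker property, I plan to argue by induction on $n$, after slightly strengthening the statement to allow $V_R,V_B,V_L,V_T$ to be any four non-empty consecutive arcs (so that the inductive step is closed under the vertex-deletion from Section~\ref{Sec:Prelim}). The base case $n=4$ forces each arc to be a singleton, so $B_Q$ consists of the two diagonals of the quadrilateral, each of which is the unique diagonal of one of the two triangulations. For the inductive step, let $T$ be any triangulation and let $v$ be an ear of $T$. If $v$ is the only vertex of its arc $V_{i(v)}$, then the cyclic order $V_R,V_B,V_L,V_T$ forces the two arcs flanking $V_{i(v)}$ to form an opposite pair (either $V_R,V_L$ or $V_B,V_T$), so the ear-diagonal $(v-1,v+1)$ lies in $B_Q\cap T$ and we are done. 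Otherwise $|V_{i(v)}|\geq 2$, so $v$ can be deleted while keeping all four arcs non-empty; the restricted triangulation $T\setminus\{(v-1,v+1)\}$ must, by induction, meet the smaller $B_Q'$, and since $B_Q'\subseteq B_Q$, this supplies an edge of $T\cap B_Q$.

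For saturation, I plan to exhibit, for each $e\in B_Q$, a triangulation $T_e$ of $C$ whose only $B_Q$-edge is $e$. If $e=(r,l)$ with $r\in V_R$ and $l\in V_L$, the diagonal $e$ splits $C$ into two sub-polygons $P_1$ (the one containing $V_B$) and $P_2$ (the one containing $V_T$). I triangulate $P_1$ as a fan from any vertex of $V_B$ and $P_2$ as a fan from any vertex of $V_T$. Every diagonal of the first fan is incident to a $V_B$-vertex, so it is of type $V_B$-$V_R$, $V_B$-$V_B$, or $V_B$-$V_L$; symmetrically, every diagonal of the second fan is of type $V_T$-$V_R$, $V_T$-$V_T$, or $V_T$-$V_L$. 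None of these types belong to $B_Q$, so $T_e\cap B_Q=\{e\}$. The symmetric case $e=(b,t)$ with $b\in V_B$ and $t\in V_T$ is handled by fans centered at vertices of $V_L$ and $V_R$ inside the two corresponding sub-polygons.

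The main point that requires care is the ``critical ear'' case of the induction: one must check, by running through the four possibilities for $V_{i(v)}$, that when $v$ is the unique element of its arc the diagonal $(v-1,v+1)$ really does join an opposite pair. Combined with the easy observation that deleting a non-critical ear preserves the four-arc structure, this is essentially all that is needed; the rest of the argument is routine bookkeeping of the edge types produced by the fan triangulations.
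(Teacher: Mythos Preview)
Your proof is correct. For saturation you use exactly the paper's fan construction. For the blocker property, however, you take a different route: you argue by induction on $n$ via ear removal, whereas the paper gives a short direct argument --- assuming a triangulation $T_0$ avoids $B_Q$, it observes that $T_0$ must contain a diagonal between two \emph{adjacent} arcs (say $V_B$ and $V_R$), takes the extreme such diagonal $e$, and looks at the triangle of $T_0$ on the far side of $e$; the third vertex of that triangle is then forced into $V_T$ or $V_L$, producing a $B_Q$-edge and a contradiction. The paper's argument is shorter and avoids induction; your approach is equally valid and perhaps more structural, trading the ``extreme edge'' trick for the (standard) existence of ears and a clean two-case split on whether the ear's arc is a singleton.
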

An example of an $(n,m)$-blocker $B_Q$ which is, in fact, a disjoint union of two complete bipartite  graphs, is given in \cref{Fig:Quadrilateral}. 
%We call such a blocker $B_Q$ \emph{quadrilateral}.
\begin{figure}[h]
    \centering
    \includegraphics[scale=0.3]{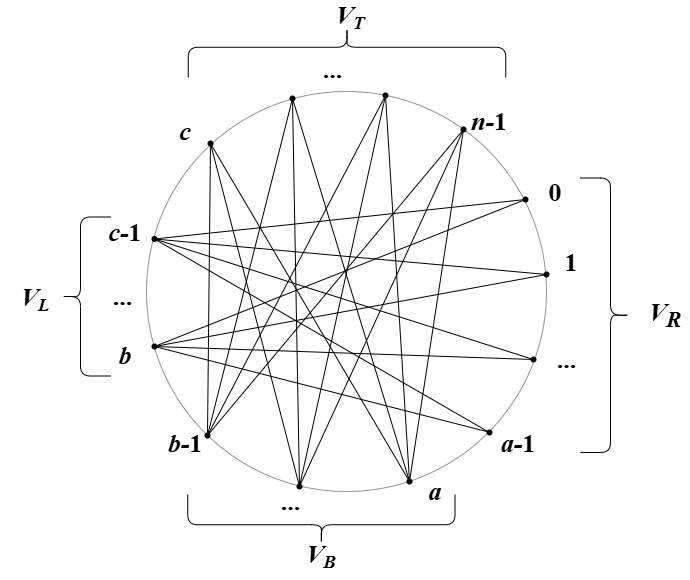}
    \caption{An example of a quadrilateral blocker $B_Q$ considered in \cref{Obs:Quadrilateral}.} 
 \label{Fig:Quadrilateral}
\end{figure}
\begin{proof}[Proof of \cref{Obs:Quadrilateral}]
First, we prove that $B_Q$ is indeed a blocker.
Assume to the contrary that there exists a triangulation $T_0$ avoiding $B_Q$, hence it must contain edges that connect two neighboring subsets, without loss of generality, $V_B$ and $V_R$. Let us denote the leftmost such edge as $e$, and consider the triangle in the triangulation that contains $e$ and is to the left of $e$. As $e$ is the leftmost of all the edges that connect $V_B$ and $V_R$, the third vertex of this triangle belongs to either $V_T$ (and then there is an edge connecting $V_T$ and $V_B$), or $V_L$ (and then there is an edge connecting $V_L$ and $V_R$), and hence $T_0 \cap B_Q\neq \emptyset$, a contradiction. 

Second, we prove that the blocker $B_Q$ is saturated (i.e. minimal with respect to inclusion). Let $(i,j)\in B_Q$ and assume without loss of generality, that $i\in V_T$, $j\in V_B$. We show that the set of edges $B_Q\setminus\{(i,j)\}$ is not a blocker, by explicitly constructing a triangulation it misses. We choose arbitrarily two vertices $k\in V_L$ and $\ell\in V_R$, and construct a triangulation by taking the edge $(i,j)$ and adding to it all possible edges emanating from $k$ to the left of $(i,j)$, and all possible edges emanating from $\ell$ to the right of $(i,j)$ (see Figure~\ref{Fig:Quadril_Proof}). Clearly, this triangulation is not blocked by $B_Q\setminus\{(i,j)\}$. This shows that $B_Q$ is saturated, completing the proof. 
\begin{figure}[h]
    \centering
    \includegraphics[scale=0.3]{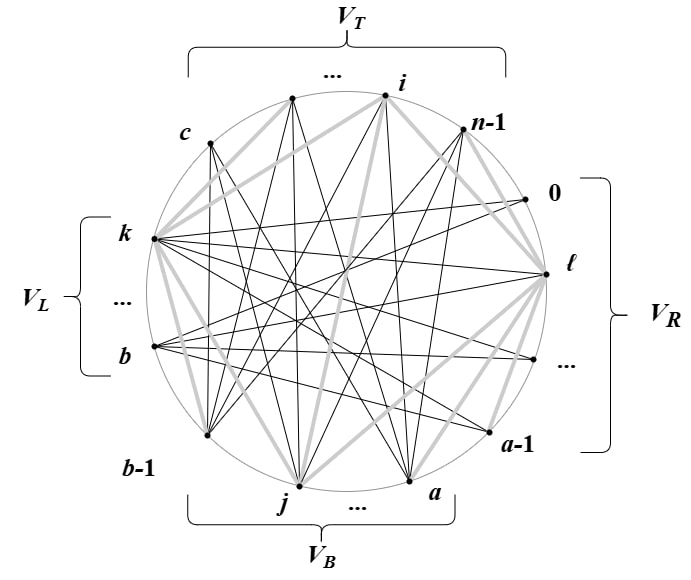}
    \caption{An example of a triangulation (drawn by gray lines) of  $B_Q\setminus\{(i,j)\}$ in the proof of \cref{Obs:Quadrilateral}.} 
 \label{Fig:Quadril_Proof}
\end{figure}
\end{proof}

The second building block allows us to construct larger saturated blockers from given smaller ones:

\begin{proposition}\label{Obs:Matrioshka}
Given a blocker $B_Q$ as in \cref{Obs:Quadrilateral} with $|V_L|\geq 3$, $|V_R|\geq 3$, $|V_T|\geq 2$ and $|V_B|\geq 2$, the graph $B_M$ obtained from $B_Q$ by removing the edges $(0,c-1)$ and $(a-1,b)$ and adding two saturated blockers $B_T$ and $B_B$ for the convex polygons $V_T\cup\{0,c-1\}$, and $V_B\cup\{a-1,b\}$, respectively, is a saturated blocker of size $|V_T||V_B|+|V_L||V_R|+|B_T|+|B_B|-2$ (see an example for a blocker $B_M$ in \cref{Fig:Matrioshka}).

\end{proposition}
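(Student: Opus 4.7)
The plan is to establish three facts about $B_M$ in order: its size, that it is a blocker, and that it is saturated. The size is immediate, since $B_T$ and $B_B$ are supported on disjoint sub-polygons and so are disjoint both from each other and from the $V_T$-$V_B$ and $V_R$-$V_L$ edges of $B_Q$, while the two removed corner edges are distinct from all other edges of $B_Q$. For the blocker property, take any triangulation $T$ of the $n$-gon. By \cref{Obs:Quadrilateral}, $T \cap B_Q \neq \emptyset$; if $T$ meets $B_Q \setminus \{(0,c-1),(a-1,b)\}$ we are done, so assume $T$ contains $(0,c-1)$ or $(a-1,b)$. But the diagonal $(0,c-1)$ carves off the top sub-polygon $V_T \cup \{0,c-1\}$ from the rest of the $n$-gon, and the restriction of $T$ to that region is a triangulation of it which must be blocked by $B_T$. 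Symmetrically, the presence of $(a-1,b)$ in $T$ forces $T$ to meet $B_B$.

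For saturation, I would exhibit, for each $e \in B_M$, a triangulation $T_e$ with $T_e \cap B_M = \{e\}$, splitting into three cases. When $e \in B_Q \setminus \{(0,c-1),(a-1,b)\}$ I start from the two-fan construction in the proof of \cref{Obs:Quadrilateral}, which already hits $B_Q$ only at $e$, and refine it to avoid $B_T \cup B_B$. If $e$ is a $V_T$-$V_B$ edge the fix is easy: pick the fan centers $k \in V_L$ and $\ell \in V_R$ away from the four "corner" vertices $\{0,a-1,b,c-1\}$, which is possible because $|V_L|,|V_R| \geq 3$; every fan edge then has an endpoint outside both $V_T \cup \{0,c-1\}$ and $V_B \cup \{a-1,b\}$, hence lies outside $B_T \cup B_B$. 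If $e=(i,j)$ is a $V_R$-$V_L$ edge, the original fan centers lie inside $V_T$ and $V_B$ and can create diagonals of the two sub-polygons, so I would replace those fans by a construction that inserts auxiliary diagonals $(i,n-1),(j,c)$ in the upper region and $(i,a),(j,b-1)$ in the lower region, splitting each into three sub-polygons whose diagonals can all be chosen with an endpoint outside both $V_T \cup \{0,c-1\}$ and $V_B \cup \{a-1,b\}$; the degenerate cases $i \in \{0,a-1\}$ or $j \in \{b,c-1\}$ are handled by dropping whichever auxiliary diagonal becomes a polygon edge and fanning from the surviving corner.

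When $e \in B_T$, the saturation of $B_T$ supplies a triangulation $T_0$ of the top sub-polygon with $T_0 \cap B_T = \{e\}$. I would set $T_e = \{(0,c-1)\} \cup T_0 \cup T_{\mathrm{rest}}$, where $T_{\mathrm{rest}}$ is a triangulation of the complement polygon $P'$ on vertices $V_R \cup V_B \cup V_L$ (closed off by the boundary edge $(0,c-1)$) containing no $V_R$-$V_L$ diagonal (so as to avoid $B_Q$) and no diagonal with both endpoints in $V_B \cup \{a-1,b\}$ (so as to avoid $B_B$). A concrete such $T_{\mathrm{rest}}$ comes from inserting the two safe diagonals $(0,a)$ and $(a,c-1)$, which split $P'$ into a triangle $\{0,a,c-1\}$ and two sub-polygons $\{0,\ldots,a\}$ and $\{a,\ldots,c-1\}$ that can be fanned from $0$ and $c-1$ respectively; each resulting diagonal can be checked to lie outside $B_Q \cup B_T \cup B_B$. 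The case $e \in B_B$ is symmetric.

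The main obstacle will be producing these "clean" triangulations in the last two cases (and the $V_R$-$V_L$ subcase of the first): in each of them we must use one prescribed edge and avoid an arbitrary saturated blocker of a sub-polygon, about which nothing is known beyond saturation. The pivotal idea is never to let the triangulation carve out a whole sub-polygon, since once it does, its restriction to that sub-polygon is a triangulation of it and must hit the sub-polygon's blocker; instead, the triangulation is routed around the sub-polygon through the opposite arms. This is exactly where the hypotheses $|V_L|,|V_R| \geq 3$ and $|V_T|,|V_B| \geq 2$ are used: they give precisely enough room to place the auxiliary diagonals and to choose the various fan centers away from the four corner vertices.
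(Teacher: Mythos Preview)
Your proposal is correct and follows essentially the same three-case architecture as the paper's proof: the blocker argument is identical, and the saturation witnesses for $e\in B_T$ (respectively $B_B$) and for $e$ a $V_T$--$V_B$ edge coincide with the paper's constructions (the paper also uses $(0,a),(a,c-1)$ with fans from $0$ and $c-1$, and fan centers $1\in V_R$, $b+1\in V_L$). The only real difference is in the $V_R$--$V_L$ subcase: the paper pins the triangulation with the two triangles $\triangle k\ell c$ and $\triangle k\ell a$ (single apex on each side) and then fans from $k$ and $\ell$, handling the corner cases $k\in\{b,c-1\}$ or $\ell\in\{0,a-1\}$ via figure variants, whereas you use four auxiliary diagonals $(i,n-1),(j,c),(i,a),(j,b-1)$ so that the potentially dangerous edges degenerate to boundary edges exactly in the corner cases; both devices achieve the same goal of keeping every new diagonal anchored outside $V_T\cup\{0,c-1\}$ and $V_B\cup\{a-1,b\}$.
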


\begin{figure}[h!]
    \centering
    \includegraphics[scale=0.33]{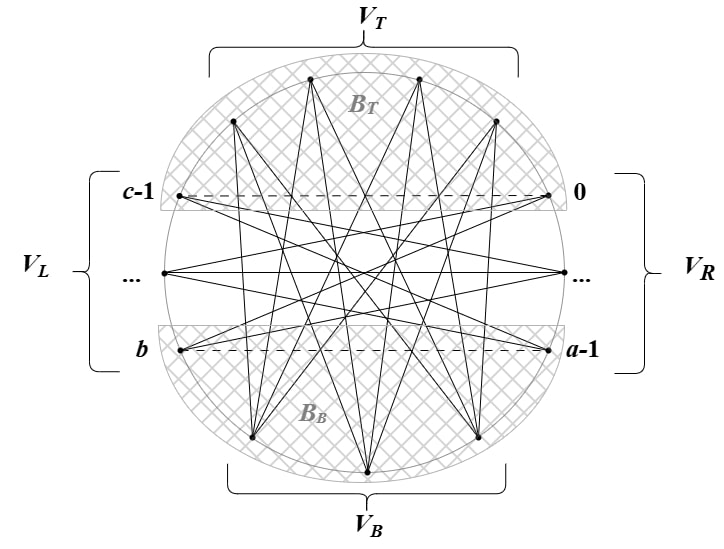}
    \caption{An example of a saturated blocker $B_M$ considered in \cref{Obs:Matrioshka}. The blockers $B_T$ and $B_B$ are drawn schematically in gray. The removed edges are drawn as dashed lines.} 
 \label{Fig:Matrioshka}
\end{figure}

\begin{proof}%[Proof of \cref{Obs:Matrioshka}]
Again, we start with the proof that $B_M$ is a blocker. Note that any triangulation which does not contain any of the edges $(0,c-1)$ and $(a-1,b)$ is blocked by $B_Q$. Any triangulation which contains at least one of the edges $(0,c-1)$, $(a-1,b)$ is blocked by one of the additional blockers $B_T$ or $B_B$, respectively. Indeed, any triangulation containing the edge $(0,c-1)$ should be completed by a triangulation of the convex polygon $V_T\cup\{0,c-1\}$ which is blocked by $B_T$. The same holds for $(a-1,b)$ with respect to $B_B$.

\medskip

Next, we prove that $B_M$ is saturated. We split the proof into three cases, depending on the edge that is removed from $B_M$:

\begin{itemize}
\item[(1)] If the removed edge $e$ is one of the edges of $B_T$, then the set $B_M\setminus \{e\}$ misses a triangulation of $V_T\cup\{0,c-1\}$. This triangulation can be completed to a triangulation of the convex $n$-gon by adding the edges $(0,c-1)$, $(c-1,a)$, $(0,a)$, and all edges emanating from $c-1$ to the vertices of the set $\{a+1,\dots,c-3\}$, and from $0$ to the vertices of $\{2,\dots,a-1\}$ (see \cref{Fig:Matrioshka_Cases_1}(a)).
The case of removing an edge from $B_B$ is symmetric.

\begin{figure}[h!]
\centering
\includegraphics[scale=0.33]{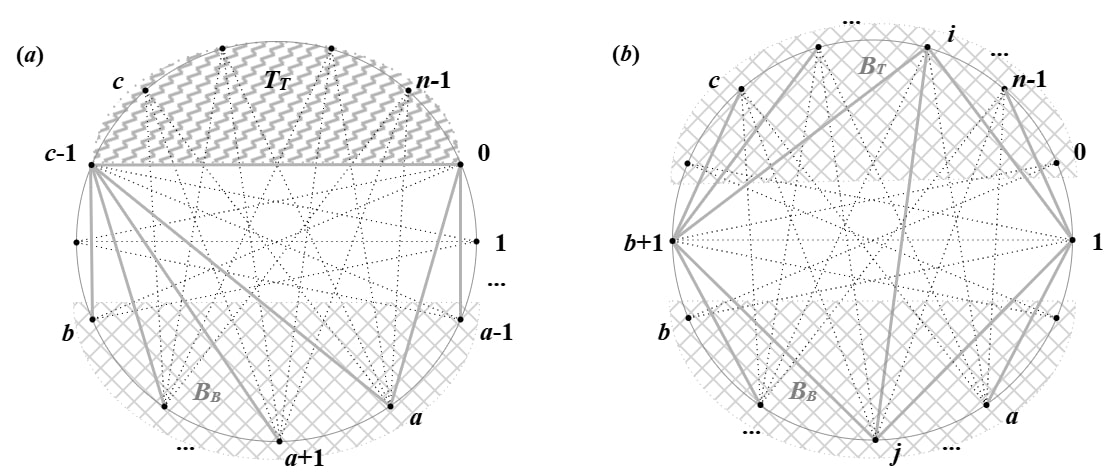}
\caption{Cases of a missing edge in $B_M$ in the proof of \cref{Obs:Matrioshka}: Part (a) illustrates the case when some edge from $B_T$ is missing; Part (b) illustrates the case when some edge connecting $V_T$ and $V_B$ is missing. \\
In both cases, the triangulations are drawn as gray bold lines, the edges of $B_M$ are drawn as dotted lines, $T_T$ is a triangulation of a part of the convex above the edge $(0,c-1)$.} 
\label{Fig:Matrioshka_Cases_1}
\end{figure}

\item[(2)] If an edge of the form $e=(i,j)$ where $i\in V_T$ and  $j\in V_B$ is removed, then the set $B_M\setminus \{e\}$ misses the triangulation which consists of $e$, along with all edges emanating from the vertex $1$ to the vertices to the right of $e$, and all edges emanating from the vertex $b+1$ to vertices to the left of $e$ (see \cref{Fig:Matrioshka_Cases_1}(b)).

\item[(3)] If an edge of the form $e=(k,\ell)$ where $k\in V_L$ and $\ell\in V_R$ is removed, then the set $B_M\setminus \{e\}$ misses the triangulation which consists of $e$, the two triangles $\triangle{k \ell c}$ and $\triangle{k \ell a}$, and all diagonals outside these triangles emanating from $k$ to the vertices placed to the left of $(c,a)$, and from $\ell$ to the vertices placed to the right of $(c,a)$. This triangulation can take one of three possible forms, depending on which of the ends of the edge $e$ belong to either $V_T \cup \{0,c-1\}$ or $V_B\cup \{a-1,b\}$,
see \cref{Fig:Matrioshka_Cases_2}.

\begin{figure}[h!]
\centering
\includegraphics[width=\linewidth]{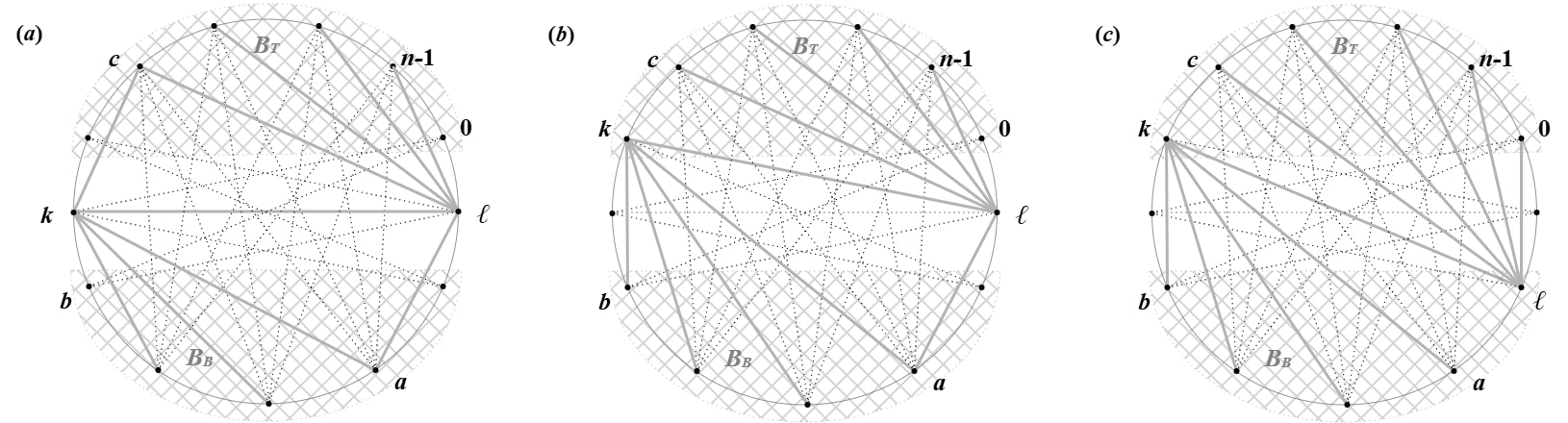}
\caption{The case of a missing edge $e=(k,\ell)$ connecting $V_L$ and $V_R$ in $B_M$ in the proof of \cref{Obs:Matrioshka}: Part (a) presents the case where both ends of $e$ do not belong to either $V_T \cup \{0,c-1\}$ or $V_B \cup \{a-1,b\}$, part (b) presents the case where exactly one of the ends belongs to either $V_T \cup \{0,c-1\}$ or $V_B\cup \{a-1,b\}$, and part (c) presents the case where both ends of $e$ are included in either $V_T \cup \{0,c-1\}$ or $V_B \cup \{a-1,b\}$.    The triangulations are drawn as gray bold lines and the edges of $B_M$ are drawn as dotted lines.} 
\label{Fig:Matrioshka_Cases_2}
\end{figure}

\end{itemize}

In this way, we see that none of the edges of the new blocker $B_M$ can be removed without missing a triangulation, and hence $B_M$ is saturated.
The statement concerning the number of edges in $B_M$ is proved by counting.
\end{proof}

\subsection{Proving the  saturation spectrum of blockers for triangulations}

In order to prove \cref{thm:saturation_spectrum}, we partition the range of the sizes of blockers into four intervals, and handle each of them using a different method. This is done in Lemmas \ref{Lem:First_Spectrum}--\ref{Lem:Third_Spectrum} below.

\subsubsection{The $(n,t)$-blockers for an integer $t$ in the range $\left[n-2, \frac{n^2}{8}+\frac{n}{4}-\frac{11}{8}\right]$}

This range is covered in the following lemma:
\begin{lemma}\label{Lem:First_Spectrum}
For any $n\geq 5$ and all integers $t$ satisfying $n-2\leq t\leq \frac{n^2}{8}+\frac{n}{4}-\frac{11}{8}$, there exists an $(n,t)$-blocker.
\end{lemma}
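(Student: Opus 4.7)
The plan is to exhibit, for each integer $t$ in the range $[n-2,\,n^2/8+n/4-11/8]$, a quadrilateral blocker from \cref{Obs:Quadrilateral} having exactly $t$ edges. The key parametrization is: for each integer $k\ge 1$, partition the $n$ vertices cyclically into four non-empty arcs of sizes $|V_R|=r$, $|V_B|=k+1$, $|V_L|=k$, $|V_T|=n-2k-1-r$, with $r\ge 1$ and $n-2k-1-r\ge 1$. By \cref{Obs:Quadrilateral}, the resulting blocker has
\[
|V_R||V_L|+|V_B||V_T|\;=\;kr+(k+1)(n-2k-1-r)\;=\;(k+1)(n-2k-1)-r
\]
edges, which decreases by $1$ as $r$ increases by $1$. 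Hence as $r$ ranges over $1,2,\ldots,n-2k-2$, the number of edges attains every integer in
\[
R_k \;:=\; \bigl[\,kn-2k^2-k+1,\;\,(k+1)n-2k^2-3k-2\,\bigr].
\]

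Next I would verify that consecutive intervals overlap: a direct computation gives $(\text{right end of }R_k)-(\text{left end of }R_{k+1})=2k\ge 2$, so for every $1\le K\le \lfloor(n-3)/2\rfloor$ the union $\bigcup_{k=1}^{K}R_k$ is the single interval $[n-2,\,f(K)]$, where $f(K):=(K+1)n-2K^2-3K-2$. It then remains to choose $K$ so that $f(K)\ge n^2/8+n/4-11/8$.

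The function $f$ is a downward-opening parabola in $K$, maximized at the real point $K_0=(n-3)/4$ with value $f(K_0)=n^2/8+n/4-7/8$, and it satisfies the identity $f(K_0-d)=f(K_0)-2d^2$. Taking $K^*$ to be the integer nearest to $K_0$, so that $|K^*-K_0|\le\tfrac12$, yields
\[
f(K^*)\;\ge\;f(K_0)-\tfrac12\;=\;\frac{n^2}{8}+\frac{n}{4}-\frac{11}{8},
\]
which is precisely the claimed upper bound of the lemma. A short check using the bounds $K_0\ge\tfrac12$ and $K_0\le(n-3)/2$ confirms that $K^*\in\{1,\ldots,\lfloor(n-3)/2\rfloor\}$ for every $n\ge 5$, so every integer $t$ in the target range lies in some $R_k$, and the corresponding partition produces the desired $(n,t)$-blocker.

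The main obstacle I anticipate is purely bookkeeping at the small values of $n$ (particularly $5\le n\le 8$, where only $k=1$ is available and one has to check by hand that $f(1)=2n-7$ already exceeds $n^2/8+n/4-11/8$), together with verifying that the labeling constraint of \cref{Obs:Quadrilateral} requiring at least one part to have size $\ge 2$ is always satisfiable by a cyclic rotation -- this is automatic here since $|V_B|=k+1\ge 2$ for every $k\ge 1$, so one can always rotate the labels to place $V_B$ (or $V_L$ whenever $k\ge 2$) into the role that carries the size-at-least-$2$ constraint.
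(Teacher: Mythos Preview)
Your proposal is correct and follows essentially the same approach as the paper: both build quadrilateral blockers as in \cref{Obs:Quadrilateral} with two of the four arcs fixed at sizes $k$ and $k+1$ while the other two vary, yielding the identical interval $R_k=[kn-2k^2-k+1,\,(k+1)n-2k^2-3k-2]$ with overlap $2k$ between consecutive intervals. The only cosmetic differences are that the paper fixes $|V_T|=k$, $|V_R|=k+1$ and varies $|V_L|$ (a cyclic relabeling of your choice), and that the paper handles the maximum by a case split on $n\bmod 4$, whereas your parabola identity $f(K_0-d)=f(K_0)-2d^2$ gives the bound $n^2/8+n/4-11/8$ uniformly; your remark about the $|V_T|\ge 2$ constraint in \cref{Obs:Quadrilateral} is a point the paper glosses over.
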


\begin{proof}%[Proof of \cref{Lem:First_Spectrum}]
We prove this lemma by constructing an $(n,t)$-blocker $B_Q$ as in \cref{Obs:Quadrilateral} for any given $t$ in the following manner. 
We fix some $k\geq 1$, and take $B_Q$ where $|V_T|=k$, $|V_R|=k+1$, and $|V_L|$ runs over the values from 1 to $n-2(k+1)$ (hence, $|V_B|$ runs down over the values from $n-2(k+1)$ to 1). As a result, $t$ grows from $$t_{\min}(k)=(k+1)\cdot 1+ k(n-2(k+1))=kn-k+1-2k^2$$ to $$t_{\max}(k)=(k+1)(n-2(k+1))+k \cdot 1=kn+n-3k-2-2k^2,$$ with the increment 1 (which follows from the fact that by removing one vertex from $|V_B|$, we decrease the total number of edges by $k$, and by adding one vertex to $|V_L|$, we increase the number of edges by $k+1$, see \cref{Fig:First_Spectrum}). 

\begin{figure}[h!]
    \centering
    \includegraphics[scale=0.29]{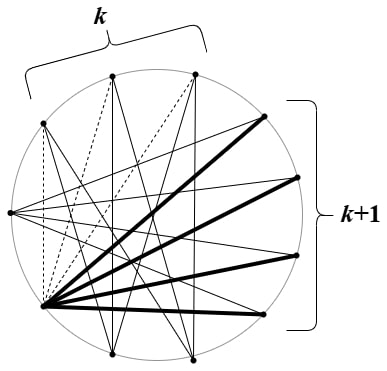}
    \caption{An example of increasing the value of $t$ by 1, for an $(n,t)$-blocker of the form $B_Q$ with a fixed $k$. Removed edges are drawn as dotted lines, added edges are drawn as bold lines.} 
 \label{Fig:First_Spectrum}
\end{figure}

Note that the minimal value of $n$ for which $B_Q$ with $|V_R|=|V_T|+1$ can be constructed is $n=5$. In this case, the only suitable value of $k$ is 1, and we have $|V_B|=|V_L|=1$.

For a fixed $k$, we have: 
$$t_{\max}(k)-t_{\min}(k)=(kn+n-3k-2-2k^2)-(kn-k+1-2k^2)=n-2k-3.$$ 
This difference is positive for all $k<\frac{n-3}{2}$. For any $k>0$, the intervals $\left[t_{\min}(k), t_{\max}(k)\right]$ and\break $\left[t_{\min}(k+1),t_{\max}(k+1)\right]$ overlap, since 
$$t_{\max}(k)-t_{\min}(k+1)=(kn+n-3k-2-2k^2)-((k+1)n-(k+1)+1-2(k+1)^2)=2k>0.$$ 

The minimal possible value of $t$ is $t_{\min}(1)=n-2$. The maximal size of a blocker of the form described above is $t_{\max}(k)$, for $k$ such that $\frac{dt_{\max}(k)}{dk}=n-3-4k=0$, which implies: $k=\frac{n-3} {4}$ (if this number is an integer). This maximal size is: $t_{\max}\left(\frac{n-3}{4}\right)=\frac{n^2}{8}+\frac{n}{4}-\frac{7}{8}$.
If $n\not\equiv 3(\text{mod }4)$, the maximum is obtained for $k=\left[\frac{n-3}{4}\right]$ (where $[\cdot ]$ is the floor function), and it is not less than $\frac{n^2}{8}+\frac{n}{4}-\frac{11}{8}$, which is the value for $n\equiv 1 (\text{mod }4)$.
\end{proof}

\subsubsection{The $(n,t)$-blockers for an integer $t$ in the range $\left[\frac{n^2}{8}+\frac{n}{2}-2,  \frac{n^2}{4}+\frac{n^2}{16}-3n+{12\frac{13}{16}}\right]$}

In order to cover the next interval in the saturation spectrum, we construct in the following lemma the set of blockers $B_M$ defined in \cref{Obs:Matrioshka}, where the sub-blockers $B_T$ and $B_B$ are both of the type $B_Q$ taken from the construction in \cref{Lem:First_Spectrum}. We emphasize that the interval of values of $t$ covered in this case does not overlap with the interval covered in the first case, and the gap between these two intervals will be covered in the next case.

\begin{lemma}\label{Lem:Second_Spectrum}
For any $n\geq 25$ and all integers $t$ satisfying $\frac{n^2}{8}+\frac{n}{2}-2\leq t\leq  \frac{n^2}{4}+\frac{n^2}{16}-3n+12\frac{13}{16}$, there exists an $(n,t)$-blocker.
\end{lemma}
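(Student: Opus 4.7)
The plan is to apply the Matrioshka construction of \cref{Obs:Matrioshka} with both inner blockers $B_T$ and $B_B$ chosen as quadrilateral blockers $B_Q$ produced by \cref{Lem:First_Spectrum} for the respective polygons $V_T\cup\{0,c-1\}$ and $V_B\cup\{a-1,b\}$. This gives two sources of flexibility for the size of the resulting blocker $B_M$: the partition of the $n$ vertices into the four arcs $V_R, V_B, V_L, V_T$, and the freedom in independently choosing $|B_T|$ and $|B_B|$ within the ranges granted by \cref{Lem:First_Spectrum}.

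First I would fix a symmetric partition with $|V_T|=|V_B|=a$ and $|V_L|=|V_R|=b$, where $a+b=n/2$ and $3\le a\le n/2-3$ (the lower bound $3$ is needed so that \cref{Lem:First_Spectrum} applies to the $(a+2)$-gon on which $B_T$ and $B_B$ live, and \cref{Obs:Matrioshka} applies to the outer four-tuple). By \cref{Lem:First_Spectrum}, $|B_T|$ and $|B_B|$ may each independently attain every integer value in $[a,\,T_{\max}(a)]$, where
$$T_{\max}(a)=\left\lfloor\tfrac{(a+2)^2}{8}+\tfrac{a+2}{4}-\tfrac{11}{8}\right\rfloor.$$
Since $|B_M|=a^2+b^2+|B_T|+|B_B|-2$ and a $\pm 1$ change in either $|B_T|$ or $|B_B|$ changes the sum by $\pm 1$, for this fixed partition I realize every integer in
$$I(a):=\bigl[\,a^2+b^2+2a-2,\ a^2+b^2+2T_{\max}(a)-2\,\bigr].$$

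Second I would let $a$ grow from $a_0:=\lceil(n-4)/4\rceil$ (where the left endpoint $a^2+b^2+2a-2$ attains its minimum $\tfrac{n^2}{8}+\tfrac{n}{2}-2$) up to $a_1:=n/2-3$ (where $b=3$ and the right endpoint is maximized), and verify that consecutive intervals $I(a)$ and $I(a+1)$ always overlap. A direct computation (using $b=n/2-a$) shows the gap between them equals
$$f(a+1)-g(a)=-\tfrac{1}{4}a^2+\tfrac{9}{2}a+\tfrac{19}{4}-n,$$
where $f(a),g(a)$ denote the endpoints of $I(a)$. This is non-positive whenever $(a-9)^2\ge 96-4n$, which for $n\ge 25$ holds for every admissible $a\ge 3$; hence the union $\bigcup_{a} I(a)$ is a single contiguous interval of integers.

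Finally I would verify the two endpoints. A short algebraic evaluation (using $(n-4)^2+(n+4)^2=2n^2+32$) gives exactly $f(a_0)=\tfrac{n^2}{8}+\tfrac{n}{2}-2$ when $4\mid n$, while for other residues of $n$ the same identity, together with a mildly asymmetric partition allowing $|V_T|\ne|V_B|$ or $|V_L|\ne|V_R|$ by $1$, produces the same lower endpoint. Similarly, $g(a_1)$ with $b=3$ evaluates to $\tfrac{5n^2}{16}-3n+12\tfrac{13}{16}$ in the worst residue class of $n\bmod 16$ once the floors in $T_{\max}$ are taken into account. The genuinely delicate step will be this bookkeeping of floor adjustments: ensuring that the endpoint constants are realized (and not just approached) in every residue class of $n$, and that consecutive intervals still overlap after the floor corrections; the hypothesis $n\ge 25$ should comfortably absorb all such error terms, since the overlap slack grows quadratically in $a$ while the floor corrections are bounded.
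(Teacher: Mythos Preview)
Your approach is essentially the same as the paper's: build $B_M$ via \cref{Obs:Matrioshka} with the symmetric four-arc partition $|V_T|=|V_B|=a$, $|V_L|=|V_R|=b$ (the paper writes $a=\tfrac{n}{4}+k$) and let both inner blockers range over the outputs of \cref{Lem:First_Spectrum}, then check that the intervals $I(a)$ overlap as $a$ increases. One small slip: completing the square in your overlap inequality gives $(a-9)^2\ge 100-4n$, not $96-4n$; since $n\ge 25$ the right-hand side is still $\le 0$, so the conclusion is unaffected.
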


\begin{proof}%[Proof of \cref{Lem:Second_Spectrum}]
Assume first that $4|n$. For a given $0\leq k\leq \frac{n}{4}-3$, we build a blocker $B_M$ as defined in \cref{Obs:Matrioshka} such that $|V_T|=|V_B|=\frac{n}{4}+k$, $|V_L|=|V_R|=\frac{n}{4}-k$, and $B_T$ and $B_B$ have the structure of $B_Q$ defined in \cref{Obs:Quadrilateral}, for all possible blockers $B_Q$ that were obtained in \cref{Lem:First_Spectrum}.
For a fixed value of $k$, this yields blockers $B_M$ whose sizes cover all the interval $[t_{\min}(k),t_{\max}(k)]$, where\\
\centerline{$t_{\min}(k)=
    %\frac{n^2}{8}+2k^2-2+2\left(\frac{n}{4}+k\right)=
    \left(\frac{n}{4}+k\right)^2+\left(\frac{n}{4}-k\right)^2+2\left(\frac{n}{4}+k\right)-2=
    \frac{n^2}{8}+\frac{n}{2}+2(k^2+k-1),$}

\medskip  

\centerline{$
     t_{\max}(k)=t_{\min}(k)+2\left(\left(\frac{\left(\frac{n}{4}+k+2\right)^2}{8}+\frac{\frac{n}{4}+k+2}{4}-\frac{11}{8}\right)-\left(\frac{n}{4}+k\right)\right)=
     t_{\min}(k)+\frac{\left(\frac{n}{4}+k\right)^2}{4}-\frac{\frac{n}{4}+k}{2}-\frac{3}{4}.$}
The value $t_{\min}(k)$ is achieved where both $B_T$ and $B_B$ are minimum-sized blockers for $\left(\frac{n}{4}+k+2\right)$-gons, and the value $t_{\max}(k)$ is achieved where these sub-blockers reach the maximal possible size  achieved in \cref{Lem:First_Spectrum}. 
Both $t_{\min}(k)$ and $t_{\max}(k)$ grow for any $k>0$, since $\frac{dt_{\min}(k)}{dk}=2(2k+1)>0$, and $\frac{dt_{\max}(k)}{dk}=\frac{n}{8}+\frac{9k}{2}+\frac{3}{2}>0$. We claim that these intervals overlap for consecutive values of $k$ and a sufficiently large $n$. Indeed, consider
{\scriptsize \begin{eqnarray*}
t_{\max}(k)-t_{\min}(k+1)&=& \left(\left(\frac{n^2}{8}+\frac{n}{2}+2(k^2+k-1)\right)+\frac{\left(\frac{n}{4}+k\right)^2}{4}-\frac{\frac{n}{4}+k}{2}-\frac{3}{4}\right)-\left(\frac{n^2}{8}+\frac{n}{2}+2((k+1)^2+(k+1)-1)\right)\\
&=&\frac{\frac{n}{4}+k}{2}\left(\frac{\frac{n}{4}+k}{2}-1\right)-\frac{19}{4}-4k.
\end{eqnarray*}}
For $k\geq 4$, the last expression is positive for all $n$, and for $0\leq k\leq 3$, it is positive for all $n\geq 25$, as can be directly checked by treating the corresponding quadratic inequalities obtained by setting $k:=0,1,2,3$. Hence, for $n\geq 25$, all the intervals overlap.

The size of the minimal possible blocker built in this way is: $t_{\min}(0)=\frac{n^2}{8}+\frac{n}{2}-2$, and the size of the maximal one is:\\ 
\centerline{%\footnotesize
$t_{\max}\left(\frac{n}{4}-3\right)=\left(\frac{n^2}{8}+\frac{n}{2}+2\left(\left(\frac{n}{4}-3\right)^2+\left(\frac{n}{4}-3\right)-1\right)\right)+\frac{\left(\frac{n}{4}+\frac{n}{4}-3\right)^2}{4}-\frac{\frac{n}{4}+\frac{n}{4}-3}{2}-\frac{3}{4}= \frac{n^2}{4}+\frac{n^2}{16}-3n+13.$}

\medskip

If $4  {\not|}  \, n$, we set $|V_T|=\lfloor{\frac{n}{4}}\rfloor+k$, $|V_L|=|V_R|=\lceil{\frac{n}{4}}\rceil-k$, $|V_B|=n-|V_T|-|V_L|-|V_R|$, and a direct calculation shows that for all possible residues, the interval of `covered' values contains the interval $\left[\frac{n^2}{8}+\frac{n}{2}-2,\frac{n^2}{4}+\frac{n^2}{16}-3n+12\frac{13}{16}\right]$.
\end{proof}

\subsubsection{The $(n,t)$-blockers for an integer $t$ in the range $\left[\frac{n^2}{8}+\frac{n}{4}-\frac{11}{8}, \frac{n^2}{8}+\frac{n}{2}-2\right]$
}

There is a gap between the intervals covered in Lemma \ref{Lem:First_Spectrum} and Lemma \ref{Lem:Second_Spectrum}. We cover this interval by the following lemma:

\begin{lemma}\label{Lem:Spectrum_Gap}
 For any $n\geq 21$ and all integers $t$ satisfying $\frac{n^2}{8}+\frac{n}{4}-\frac{11}{8}\leq t\leq  \frac{n^2}{8}+\frac{n}{2}-2$, there exists an $(n,t)$-blocker.
\end{lemma}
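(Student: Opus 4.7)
The plan is to fill the gap by introducing a new construction, which I call a \emph{one-sided matrioshka}. Starting from a quadrilateral blocker $B_Q$ as in \cref{Obs:Quadrilateral} with $|V_T|\geq 2$ and $|V_L|,|V_R|\geq 3$, remove only the single edge $(0,c-1)$ and insert a saturated blocker $B_T$ on the top sub-polygon with vertex set $V_T\cup\{0,c-1\}$, keeping the rest of $B_Q$ (in particular, the edge $(a-1,b)$) unchanged. The resulting graph has size $|V_T|\cdot|V_B|+|V_L|\cdot|V_R|+|B_T|-1$, giving one additional degree of freedom (through $|B_T|$) compared with $B_Q$ alone.

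I would first prove as an auxiliary proposition that this construction is a saturated blocker. The blocker property is immediate: any triangulation using the chord $(0,c-1)$ restricts to a triangulation of the top sub-polygon, which must meet $B_T$; any triangulation not using $(0,c-1)$ would avoid $B_Q$ entirely, contradicting \cref{Obs:Quadrilateral}. For saturation I would proceed edge by edge, mirroring the case analysis from the proof of \cref{Obs:Matrioshka}. For edges in $B_T$, I would combine a triangulation of the top polygon missing $B_T\setminus\{e\}$ with the ``ear'' triangulation of the bottom polygon based on a triangle $(0,v,c-1)$ with $v\in V_B$ and two fans from $0$ and $c-1$; such a triangulation avoids $B_Q\setminus\{(0,c-1)\}$, exactly as in the proof of \cref{Obs:Matrioshka}. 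For edges in $B_Q\setminus\{(0,c-1)\}$, I would reuse the fan triangulations from the proof of \cref{Obs:Quadrilateral}, taking care to choose the fan centers so that no fan edge enters the interior of the top sub-polygon.

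Equipped with this proposition, I would close the gap by a two-parameter sweep. Fix $k=\lfloor(n-3)/4\rfloor$, and let $B_Q$ range over all \cref{Lem:First_Spectrum} blockers with $|V_T|=k$, $|V_R|=k+1$, so that $|B_Q|$ takes every integer value in $[t_{\min}(k),t_{\max}(k)]$; simultaneously, let $B_T$ range over the saturation spectrum of the $(k+2)$-gon, which by induction via \cref{Lem:First_Spectrum} applied to the smaller polygon contains every integer in $\bigl[k,\,\tfrac{(k+2)^2}{8}+\tfrac{k+2}{4}-\tfrac{11}{8}\bigr]$. Since both ranges are intervals of consecutive integers, their Minkowski sum $|B_Q|+|B_T|-1$ hits every integer in the corresponding sum interval. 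A routine computation shows that, for $k\approx n/4$, this sum interval comfortably contains the required range $\bigl[\tfrac{n^2}{8}+\tfrac{n}{4}-\tfrac{11}{8},\,\tfrac{n^2}{8}+\tfrac{n}{2}-2\bigr]$ whenever $n\geq 21$.

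The main obstacle I expect is the saturation argument for the one-sided matrioshka when the removed edge is a $V_R$--$V_L$ edge $e\neq(0,c-1)$. The natural fan triangulation from \cref{Obs:Quadrilateral} has its centers in $V_T$ and $V_B$, and the $V_T$-centered fan introduces chords internal to the top sub-polygon that might coincide with edges of $B_T$. The observation to get around this is that such a triangulation does not use $(0,c-1)$, so it does not induce a full triangulation of the top polygon on its own; the internal chords inside the top polygon can then be locally replaced by alternative edges incident to $0$ or $c-1$, chosen to avoid any specified edge of $B_T$, without disturbing the avoidance of $B_Q\setminus\{(0,c-1),e\}$ elsewhere.
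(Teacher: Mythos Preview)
Your route is different from the paper's, and the difference matters. The paper introduces no new construction here: it reuses the two-sided matrioshka of \cref{Obs:Matrioshka} with one fixed asymmetric shape, $|V_T|=\tfrac{n}{2}-1$, $|V_B|=\tfrac{n}{4}-2$, $|V_L|=3$, $|V_R|=\tfrac{n}{4}$, takes $B_B$ to be a minimum-sized blocker, and lets only $B_T$ range over the interval supplied by \cref{Lem:First_Spectrum} for the $(\tfrac{n}{2}+1)$-gon. A single computation then shows the resulting sizes sweep the whole gap; no new saturation proof is needed.

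Your one-sided matrioshka can also be made to work, but the argument you sketch for the ``main obstacle'' is wrong. Your key claim---that the $V_T$-centered fan ``does not induce a full triangulation of the top polygon on its own''---is false. For any $w\in V_T$, the neighbours of $w$ in the $V_T$-side polygon $P_T$ coincide with its neighbours in $V_T\cup\{0,c-1\}$, so the diagonals $(w,j)$ with $j\in V_T\cup\{0,c-1\}$ are precisely the fan triangulation of that $(|V_T|+2)$-gon from $w$, and $B_T$ blocks it. ``Local replacement by edges incident to $0$ or $c-1$'' cannot help, since those edges are still diagonals of the top polygon and any resulting set of diagonals there is again blocked by $B_T$. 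The correct fix is to abandon the $V_T$-centered fan altogether: since $e=(k,\ell)\neq(c-1,0)$, $P_T$ contains a vertex $v^*\notin V_T\cup\{0,c-1\}$ (take $v^*=c-2$ if $k\le c-2$, or $v^*=1$ if $\ell\ge1$); draw the chord $(v^*,n-1)$ (resp.\ $(v^*,c)$), fan from $v^*$ on the side containing $V_T$, and fan from $n-1$ (resp.\ $c$) on the other side. Every diagonal produced has at least one endpoint outside $V_T\cup\{0,c-1\}$, hence misses $B_T$, and a direct check shows none is a $V_R$--$V_L$ edge. With this correction your construction is saturated and your two-parameter sweep does cover the gap, but the paper's approach gets there without the extra proposition.
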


\begin{proof}%[Proof of \cref{Lem:Spectrum_Gap}]
We prove this lemma by an explicit construction. 

Again, we first assume that $4|n$. Consider a blocker $B_M$ defined as in \cref{Obs:Matrioshka}, where $|V_T|=\frac{n}{2}-1$, $|V_B|=\frac{n}{4}-2$, $|V_L|=3$, $|V_R|=\frac{n}{4}$,  the sub-blocker $B_T$ takes all the values given in \cref{Lem:First_Spectrum} (where we substitute $\frac{n}{2}+1$ for $n$), and $B_B$ takes the minimal possible size for a convex $(|B_B|+2)$-gon, which is $\frac{n}{4}-2$.
The size of the blocker $B_M$ then runs over all the integers in the range from \\
\centerline{$t_{\min}=\left(\frac{n}{2}-1\right)\left(\frac{n}{4}-2\right)+3\cdot \frac{n}{4} +\left(\left(\frac{n}{2}+1\right)-2\right)+\left(\frac{n}{4}-2\right)-2=\frac{n^2}{8}+\frac{n}{4}-3$} 
to\\ 
\centerline{$t_{\max}=\left(\frac{n}{2}-1\right)\left(\frac{n}{4}-2\right)+3\cdot \frac{n}{4} +\left(\frac{\left(\frac{n}{2}+1\right)^2}{8}+\frac{\frac{n}{2}+1}{4}-\frac{11}{8}\right)+\left(\frac{n}{4}-2\right)-2=\frac{n^2}{8}+\frac{n^2}{32}-3.$}%\frac{3n}{8}-3.$

Now, we find the condition on $n$, under which 
%the interval   
$[t_{\min},t_{\max}]$ fills the gap $\left[\frac{n^2}{8}+\frac{n}{4}-\frac{11}{8},  \frac{n^2}{8}+\frac{n}{2}-2\right]$. It is obvious that $t_{\min}<\frac{n^2}{8}+\frac{n}{4}-\frac{11}{8}$. To check the upper bound, we solve the inequality
$t_{\max}>\frac{n^2}{8}+\frac{n}{2}-2$. This inequality is equivalent to the quadratic inequality $\frac{n^2}{32}-\frac{n}{2}-1>0,$
which holds for all $n\geq 18$. Hence, the sizes of the blockers constructed using this strategy cover the interval $\left[\frac{n^2}{8}+\frac{n}{4}-\frac{11}{8},  \frac{n^2}{8}+\frac{n}{2}-2\right]$ for all $n\geq 18$.

\medskip

For the case $4  {\not|}  \, n$, we construct $B_M$ with  $|V_B|=\lfloor\frac{n}{4}\rfloor-2$, $|V_L|=3$, $|V_R|=\lfloor\frac{n}{4}\rfloor$, $|V_T|=n-|V_B|-|V_L|-|V_R|$. A simple calculation shows that in all cases, the covered interval  $[t_{\min},t_{\max}]$ fills the gap $\left[\frac{n^2}{8}+\frac{n}{4}-\frac{11}{8},  \frac{n^2}{8}+\frac{n}{2}-2\right]$ for all $n\geq 21$.
\end{proof}

\subsubsection{The $(n,t)$-blockers for an integer $t$ in the range $\left[\frac{n^2}{4}-2n+10 , \frac{n^2}{2}-C{n\log n}\right]$}

The following lemma completes the proof of \cref{thm:saturation_spectrum} by nesting blockers defined in Lemmas \ref{Lem:First_Spectrum}-\ref{Lem:Spectrum_Gap}, in a fractal manner, inside a specific blocker $B_M$ defined as in \cref{Obs:Matrioshka}. 

\begin{lemma}\label{Lem:Third_Spectrum}
There exist constants $C>0$ and $n_0 \in \mathbb{N}$ such that for all $n>n_0$ and all integers $t$ satisfying $\frac{n^2}{4}-2n+10 \leq t\leq  \frac{n^2}{2}-C{n\log n}$, there exists an $(n,t)$-blocker.
\end{lemma}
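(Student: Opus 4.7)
The plan is to prove \cref{Lem:Third_Spectrum} by a recursive (``fractal'') application of the Matrioshka construction of \cref{Obs:Matrioshka}: at each level, fix $|V_L|=|V_R|=3$, split the remaining $n-6$ vertices nearly evenly between $V_T$ and $V_B$, and construct the sub-blockers $B_T,B_B$ (on sub-polygons of size roughly $n/2$) by the same recursion. After about $\log_2 n$ levels the sub-polygons become small enough that their blockers are supplied directly by \cref{Lem:First_Spectrum,Lem:Second_Spectrum,Lem:Spectrum_Gap}, which at those small scales cover the entire achievable range.

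By \cref{Obs:Matrioshka} the resulting blocker has size $t=|V_T|\cdot|V_B|+7+|B_T|+|B_B|$. Setting $|V_T|=\lceil(n-6)/2\rceil$ and $|V_B|=\lfloor(n-6)/2\rfloor$, and letting $f(n)$ denote the maximum $t$ attainable by this recursion, we obtain $f(n)\geq \lfloor(n-6)^2/4\rfloor+7+2f\!\left(\lceil(n-2)/2\rceil\right)$; a standard expansion shows $f(n)=\tfrac{n^2}{2}-O(n\log n)$, with the hidden constant determined by the $\Theta(n)$ ``overhead'' incurred per level. Dually, taking $B_T,B_B$ to be minimum-sized blockers (each of size $\approx n/2$) gives the minimum attainable $t\approx\tfrac{n^2}{4}-2n+O(1)$, comfortably below the target lower endpoint $\tfrac{n^2}{4}-2n+10$. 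Choosing any $C>4$ (say $C=5$) then makes the construction cover an interval of the desired shape for all sufficiently large $n$.

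The heart of the proof — and the main obstacle — is showing that the resulting set of blocker sizes is not just an interval of reals but contains \emph{every integer} in that interval. The inductive hypothesis must be strengthened accordingly: for every $m\geq m_0$, an $(m,s)$-blocker exists for \emph{every} integer $s\in[m-2,f(m)]$. This property is preserved by the recursion, because the sum of two intervals of integers is again an interval of integers: if $|B_T|$ and $|B_B|$ each range over all integers in their respective intervals, then $|B_T|+|B_B|$ ranges over all integers in the sum interval, hence $t$ hits every integer of the predicted range. Propagating the ``every integer'' property requires that at every level the range supplied by \cref{Lem:First_Spectrum,Lem:Second_Spectrum,Lem:Spectrum_Gap} (up to $\tfrac{5m^2}{16}+O(m)$) and by the inductive case of \cref{Lem:Third_Spectrum} itself (from $\tfrac{m^2}{4}-2m+10$ upward) overlap, which a short calculation confirms. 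Parity issues and small-$n$ base cases (where the target interval is empty) are absorbed by perturbing $|V_T|,|V_B|$ by $1$ and choosing $n_0$ large enough.
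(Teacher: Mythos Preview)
Your proposal is correct and takes essentially the same approach as the paper: a recursive Matrioshka construction with $|V_L|=|V_R|=3$ and the earlier lemmas as the base case, together with the same two checks (overlap of the recursive range with that of \cref{Lem:First_Spectrum,Lem:Second_Spectrum,Lem:Spectrum_Gap}, and the fact that sums of integer intervals are integer intervals). The only difference is presentation: the paper unfolds the recursion as ``Step $i$'' and tracks $t^i_{\max}=a_in^2+b_in+c_i$ via explicit linear recurrences in the depth $i$ (solved in closed form, yielding $C\approx\tfrac{19}{4}$ at depth $\log n-\log\log n$), whereas you phrase the same argument as strong induction on the polygon size.
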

\begin{proof}
We first assume that $n$ is even. In order to prove the lemma, we consider the blocker $B_M$ defined in \cref{Obs:Matrioshka} with $|V_L|=|V_R|=3$ and $|V_T|=|V_B|=\frac{n}{2}-3$.
   
To construct the sequence of fractal-type nested blockers, we apply the following algorithm:

\medskip

\begin{itemize}

\item[{\bf{Step 0:}}] We consider the blocker $B_M$ that makes use of the
sub-blockers $B_T$ and $B_B$ (as defined in \cref{Obs:Matrioshka}), where $B_T$ and $B_B$ run over the set of blockers defined in Lemmas \ref{Lem:First_Spectrum}-\ref{Lem:Spectrum_Gap}, each starting from the minimum-sized blocker for an $\left(\frac{n}{2}-1\right)$-gon. We increase alternately the sizes of $B_T$ and $B_B$ by one until both of them reach the maximal size defined in \cref{Lem:Second_Spectrum}. 
    
The minimal possible size of $B_M$ in this step is 
$$t_{\min}=\left(\frac{n}{2}-3\right)\left(\frac{n}{2}-3\right)+3 \cdot 3 +\left(\frac{n}{2}-3\right)+\left(\frac{n}{2}-3\right)-2=\frac{n^2}{4}-2n+10,$$  obtained by taking both $B_T$ and $B_B$ to be minimum-sized blockers. For $n\geq 13$, this value is smaller than the maximal value $\frac{n^2}{4}+\frac{n^2}{16}-3n+{12\frac{13}{16}}$ reached in \cref{Lem:Second_Spectrum}. Hence, for all $n \geq 13$, there will be an overlap between the interval covered by \cref{Lem:Second_Spectrum} and the interval covered by this lemma. 
       
The maximal size is: 
{\footnotesize\begin{eqnarray*}
t^0_{\max}=a_0n^2+b_0n+c_0&=&\left(\frac{n}{2}-3\right)\left(\frac{n}{2}-3\right)+3 \cdot 3 +2\left(\frac{\left(\frac{n}{2}-1\right)^2}{4}+\frac{\left(\frac{n}{2}-1\right)^2}{16}-3\left(\frac{n}{2}-1\right)+12\frac{13}{16}\right)-2\\
&=&\frac{13}{32}n^2-\frac{53}{8}n+\frac{193}{4},
\end{eqnarray*}} 
so we have: $a_0=\frac{13}{32}$, $b_0=-\frac{53}{8}$, $c_0=\frac{193}{4}$. 
Therefore, after step 0, the range $\left[n-2,\frac{13}{32} n^2-\frac{53}{8}n+\frac{193}{4}\right]$ is covered, assuming ${\frac{n}{2}-1} \geq 25$ (this assumption is required for using the results of \cref{Lem:Second_Spectrum}). %49

\medskip

\item[{\bf{Step 1:}}] 
We proceed similarly, but now instead of taking $B_T$ and $B_B$ as in Lemmas \ref{Lem:First_Spectrum}-\ref{Lem:Spectrum_Gap}, we take them to be the blockers that were constructed in step 0. As in the previous step, we increase their sizes by one in every iteration. The maximal size of $B_M$ in this step is 
\begin{eqnarray*}
t^1_{\max}=a_1n^2+b_1n+c_1 &=& \left(\frac{n}{2}-3\right)\left(\frac{n}{2}-3\right)+3 \cdot 3 + 2\left(a_0\left(\frac{n}{2}-1\right)^2+b_0\left(\frac{n}{2}-1\right)+c_0\right) -2 \\
&=&\frac{n^2}{4}-3n+16+2\left(a_0\left(\frac{n}{2}-1\right)^2+b_0\left(\frac{n}{2}-1\right)+c_0\right).\end{eqnarray*}

Hence, we get that:
$a_1=\frac{1}{4}+\frac{a_0}{2}, \ b_1=-2a_0+b_0-3 \mbox{ and } c_1=2a_0-2b_0+2c_0+16.$ 
It is easy to verify that after steps 0 and 1, the range 
$\left[n-2,a_1n^2+b_1n+c_1\right]$ is covered.

\medskip

\item[{\bf{Step $i$:}}] Proceeding as in previous step, we continue the saturation spectrum up to the size 
$t^i_{\max}=a_{i}n^2+b_{i}n+c_{i}$,  where
%\end{equation}  
\begin{equation}
\label{Eq:Recurrent_Coefficients}
a_i=\frac{1}{4}+\frac{a_{i-1}}{2},\qquad b_i=-2a_{i-1}+b_{i-1}-3,\qquad c_{i}=2a_{i-1}-2b_{i-1}+2c_{i-1}+16. 
\end{equation}

\end{itemize}

Solving the linear recursions \eqref{Eq:Recurrent_Coefficients} for $a_i$, we get: 
\begin{equation}\label{Eq:explicit_ai}
a_i=\frac{1}{2}-\frac{3}{2^{i+5}}.
\end{equation} 
Next, we have from Recursions \eqref{Eq:Recurrent_Coefficients} for $b_i$ that:
$b_i-b_{i-1}=-2a_{i-1}-3$. Summing these equations up, we get:
$$b_i-b_0=-\sum\limits_{j=0}^{i-1}(2a_j+3)=-\sum\limits_{j=0}^{i-1}\left(1-\frac{3}{2^{j+4}}+3\right) = -4i -\frac{3}{2^{i+3}}+ \frac{3}{8},$$ and hence we have: 
\begin{equation}\label{Eq:explicit_bi}
b_i=-4i-\frac{3}{2^{i+3}}-\frac{25}{4}.\end{equation} 
Substituting 
the explicit expressions we obtained for $a_i$ and $b_i$ in the recursion for $c_i$ in Recursions \eqref{Eq:Recurrent_Coefficients}, and solving the linear recursion for $c_i$, we get:
\begin{equation}\label{Eq:explicit_ci}
c_i= {\frac{689}{8}}\cdot2^i-8i-\frac{75}{2}-\frac{3}{2^{i+3}}.
\end{equation}

Note that while $a_i=\frac{1}{2}-\frac{3}{2^{i+5}}$ increases with $i$ and approaches $\frac{1}{2}$, which seems to get us as close as we wish to $\frac{1}{2}n^2$, the negative term $b_i=-4i-\frac{3}{2^{i+3}}-\frac{25}{4}$ decreases with $i$ and bounds the closeness to $\frac{1}{2}n^2$ we can obtain in this way. Specifically, an easy calculation shows that for any $i$, we have $t^i_{\mathrm{max}}=a_i n^2+b_i n+c_i
\leq \frac{1}{2}n^2-\Omega(n \log n)$. 

\medskip

We choose to perform the algorithm for $k=\log n-\log\log n$ steps, for which this closeness is obtained, where we assume for simplicity that $\log n - \log \log n$ is an integer. For the maximal size of blocker that can be obtained after $\log n - \log \log n$ steps, the explicit expressions for $a_i,b_i$ and $c_i$ obtained in Equations \eqref{Eq:explicit_ai}, \eqref{Eq:explicit_bi} and \eqref{Eq:explicit_ci}
above imply that: 

{\footnotesize\begin{equation*}
a_{\log n - \log \log n}=\frac{1}{2}-\frac{3\log n}{4n},\quad b_{\log n - \log \log n}=-4\log n+4\log\log n-\frac{3\log n}{8n}-\frac{25}{4},\quad
c_{\log n - \log \log n}={\frac{689}{8}}\cdot\frac{n}{\log n}+o(n),\end{equation*}}
and therefore:
\begin{equation}\label{eq:asymp}
t^{\log n - \log \log n}_{\max}=a_{\log n - \log \log n} n^2 + b_{\log n - \log \log n} n +c_{\log n - \log \log n}=\frac{n^2}{2}-\frac{19}{4}\cdot n\log n+o(n\log n).
\end{equation}
Hence, there exists a constant $C>0$ and $n_0>0$ such that for any $n>n_0$, the upper bound of the saturation spectrum is $t_{\max}\geq \frac{n^2}{2}-Cn\log n$.

\medskip 

When $n$ is odd, the above analysis is similar, where the sizes of $V_T$ and $V_B$ are slightly changed to $|V_T|=\frac{n-1}{2}-3$ and $|V_B|=\frac{n+1}{2}-3$ respectively. A direct calculation shows that the change
influences only constants in the $o(n\log n)$ term in Equation \eqref{eq:asymp}. \end{proof}

Lemmas \ref{Lem:First_Spectrum}--\ref{Lem:Third_Spectrum} together complete the proof of \cref{thm:saturation_spectrum}.

%====================================

\section{The complete characterization of the $(n,n-1)$-blockers} \label{Sec:Full_Char}

In this section, we prove \cref{Thm:Main-Intro} which provides a complete characterization of the $(n,n-1)$-blockers. All the saturated blockers for an $n$-gon where $n \leq 6$ (including the $(n,n-1)$-blockers) are presented in \cref{app:small_cases}, and thus, we can assume in the sequel that $n>6$. 

We start with some observations 
in \cref{subsec:obs}. Then, in Section \ref{subsec:(n,n-1)}, which is the heart of Section \ref{Sec:Full_Char}, we characterize the $(n,n-1)$-blockers containing a non-covered vertex of degree 2. Finally, in Section \ref{subsec:other-case}, we show that the characterization of Section \ref{subsec:(n,n-1)} actually exhausts all $(n,n-1)$-blockers, as any $(n,n-1)$-blocker must contain a non-covered vertex of degree 2.

\subsection{Some observations}\label{subsec:obs}
 
We start with several observations, which will be helpful in the complete characterization of $(n,n-1)$-blockers. 

\begin{observation}\label{Obs:DegAtMost2}
If an ear-cover $e=(i-1,i+1)$ does not belong to the $(n,n-1)$-blocker $B$ for a convex $n$-gon $C$, then $\deg_B(i)\leq 2$. 
\end{observation}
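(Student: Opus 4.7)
The plan is to use the vertex deletion operation $B \setminus \{i\}$ defined in the Preliminaries, together with the minimum blocker size bound from Observation \ref{Obs:MinB_n-2}, to derive the degree bound by a simple counting argument.

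First I would verify that $B \setminus \{i\}$ is a blocker for the family of triangulations of the convex $(n-1)$-gon $C \setminus \{i\}$. Recall that $B \setminus \{i\}$ is obtained from $B$ by deleting all edges incident to $i$ together with the edge $(i-1,i+1)$; in our setting the latter removal has no effect since $(i-1,i+1) \notin B$ by hypothesis. Given any triangulation $T'$ of $C \setminus \{i\}$, the set $T = T' \cup \{(i-1,i+1)\}$ is a triangulation of $C$, because $(i-1,i+1)$ is a diagonal of $C$ that cuts off the ear at vertex $i$ and $T'$ triangulates the remaining convex region. Since $B$ is a blocker for $\TriF$, it meets $T$; as $(i-1,i+1) \notin B$, the intersection must lie in $T'$, and this intersection consists of diagonals of $C \setminus \{i\}$, none of which touches $i$ nor equals $(i-1,i+1)$. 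Hence such edges belong to $B \setminus \{i\}$, so $B \setminus \{i\}$ is indeed a blocker for triangulations of $C \setminus \{i\}$.

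Next I would count. Since $(i-1,i+1) \notin B$, the only edges removed in passing from $B$ to $B \setminus \{i\}$ are the $\deg_B(i)$ edges of $B$ incident to $i$, so
\[
|B \setminus \{i\}| = |B| - \deg_B(i) = (n-1) - \deg_B(i).
\]
Applying Observation \ref{Obs:MinB_n-2} to the $(n-1)$-gon $C \setminus \{i\}$ yields $|B \setminus \{i\}| \geq (n-1) - 2 = n-3$. Combining these two facts gives $(n-1) - \deg_B(i) \geq n-3$, i.e.\ $\deg_B(i) \leq 2$, as claimed.

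There is no real obstacle here: the only point that requires a moment's care is the verification that the deletion of the ``boundary vertex'' $i$ preserves the blocker property when its associated ear-cover is absent, which is exactly what makes the extension $T = T' \cup \{(i-1,i+1)\}$ well-defined and forces the intersection with $B$ to survive in the smaller polygon.
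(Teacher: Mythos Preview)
Your proof is correct and follows essentially the same approach as the paper's: both use that $B\setminus\{i\}$ must block triangulations of the $(n-1)$-gon $C\setminus\{i\}$ (via the extension $T'\mapsto T'\cup\{(i-1,i+1)\}$) and then invoke the minimum blocker size bound of Observation~\ref{Obs:MinB_n-2}. The paper phrases this as a short proof by contradiction, while you give the direct counting version with a bit more detail, but the underlying argument is identical.
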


\begin{proof}
Assume towards a contradiction that $\deg_B(i)\geq 3$. Then $B \setminus \{i\}$ is of size at most $(n-1)-3=n-4$, which means that it cannot block all triangulations of the convex $(n-1)$-gon $C \setminus \{i\}$ (by Aichholzer et al.~\cite{AichholzerCMFHHHW10}). Let $T$ be a triangulation of $C \setminus \{i\}$ which it misses. Then $T \cup \{(i-1,i+1)\}$ is a triangulation of $C$ missed by $B$, a contradiction. 
\end{proof}

\begin{observation}\label{Obs:Deg2_non-covered}
If an ear-cover $e=(i-1,i+1)$ does not belong to the $(n,n-1)$-blocker $B$ for a convex $n$-gon $C$ and $\deg_B(i)=2$, then $B\setminus\{i\}$ is a minimum-sized blocker for the $(n-1)$-gon $C\setminus\{i\}$.  
\end{observation}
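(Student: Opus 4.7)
The proof should be short and parallel to that of Observation 3.1. My plan has three ingredients: a size count, a lifting argument to show the blocker property, and an appeal to Observation 2.5 to conclude minimality.

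First I would compute the size of $B \setminus \{i\}$. By the definition of $B \setminus \{i\}$ given in Section 2, we discard all edges of $B$ incident to $i$, and we also discard the ear-cover $(i-1,i+1)$. Since $(i-1,i+1) \notin B$ by hypothesis, only the edges incident to $i$ are actually removed, and there are exactly $\deg_B(i) = 2$ of them. Therefore
\[
|B \setminus \{i\}| = |B| - 2 = (n-1) - 2 = n-3,
\]
which equals $(n-1)-2$, the minimum blocker size on a convex $(n-1)$-gon according to \cref{Obs:MinB_n-2}.

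Next I would verify that $B \setminus \{i\}$ is a blocker for the family of triangulations of $C \setminus \{i\}$. I argue by contrapositive, following the template of \cref{Obs:DegAtMost2}. Suppose $T$ is a triangulation of $C \setminus \{i\}$ that avoids $B \setminus \{i\}$. Then $T' := T \cup \{(i-1,i+1)\}$ is a triangulation of $C$: the edge $(i-1,i+1)$ is the unique new diagonal needed to triangulate the ear at vertex $i$. Every diagonal of $T$ avoids vertex $i$, so it lies in $B$ iff it lies in $B \setminus \{i\}$; by assumption this does not happen. And the added edge $(i-1,i+1)$ is not in $B$ by hypothesis. Thus $T'$ avoids $B$, contradicting that $B$ is a blocker of $C$.

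Combining the two points, $B \setminus \{i\}$ is a blocker of $C \setminus \{i\}$ of size exactly $(n-1)-2$, hence a minimum-sized blocker by \cref{Obs:MinB_n-2}. No real obstacle arises in the proof; the only point to be careful about is the bookkeeping in the definition of $B \setminus \{i\}$, where the hypothesis $(i-1,i+1) \notin B$ is what makes the size drop by exactly $2$ rather than by $3$.
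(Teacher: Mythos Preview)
Your proof is correct and essentially identical to the paper's: both compute $|B\setminus\{i\}|=n-3$ using $\deg_B(i)=2$ and $(i-1,i+1)\notin B$, then lift an arbitrary triangulation of $C\setminus\{i\}$ to $C$ by adding the ear-cover $(i-1,i+1)$ to show $B\setminus\{i\}$ blocks it. The only cosmetic difference is that you phrase the lifting step as a contrapositive while the paper states it directly.
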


\begin{proof}
The set $B\setminus\{i\}$ contains $(n-1)-2=n-3$ edges, which is the minimum possible size of a blocker for an $(n-1)$-gon, as was shown by Aichholzer et al.~\cite{AichholzerCMFHHHW10}. Let $T'$ be a triangulation of $C \setminus \{i\}$. Then $T=T' \cup \{i-1,i+1\}$ is a triangulation of $C$, and hence is it blocked by $B$. As $B$ does not contain the edge $\{i-1,i+1\}$ and $T$ does not contain edges emanating from $i$, this implies that $B \setminus \{i\}$ blocks $T'$. Hence, $B \setminus \{i\}$ is a minimum-sized blocker for $C \setminus \{i\}$, as asserted.
\end{proof}

\begin{corollary}\label{Cor:CanConstruct}
If an $(n,n-1)$-blocker $B'$ for an $n$-gon $C'$ has at least one vertex $v$ satisfying $\deg_{B'}(v)=2$, which is not covered by an ear-cover, then $B'$ can be obtained from a minimum-sized $(n-1,n-3)$-blocker $B$ for an $(n-1)$-gon $C$ by adding the vertex $v$ and the two edges emanating from it.
\end{corollary}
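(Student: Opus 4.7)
The statement is essentially a direct consequence of \cref{Obs:Deg2_non-covered}, so the plan is to make the reconstruction explicit and check that all parameters match. Given the $(n,n-1)$-blocker $B'$ on $C'$ and the vertex $v$ with $\deg_{B'}(v)=2$ which is not covered by an ear-cover of $B'$, I would set $C := C'\setminus\{v\}$ and $B := B'\setminus\{v\}$. Note that because $v$ is not covered, the edge $(v-1,v+1)$ does not belong to $B'$, so the operation $B'\setminus\{v\}$ only removes the two edges of $B'$ that are incident to $v$. Thus $|B| = (n-1)-2 = n-3$, and $C$ has $n-1$ vertices.

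Next I would invoke \cref{Obs:Deg2_non-covered}, which asserts precisely that under these hypotheses $B$ is a minimum-sized blocker for the $(n-1)$-gon $C$. Since every minimum-sized blocker is saturated (by \cref{Obs:MinB_n-2}, as otherwise one could remove an edge and obtain a blocker of size below the proven minimum $n-3$), $B$ is an $(n-1,n-3)$-blocker.

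Finally, the reconstruction is straightforward: $B'$ is obtained from $B$ by re-inserting the vertex $v$ between $v-1$ and $v+1$ (which splits the boundary edge $(v-1,v+1)$ of $C$ into the two boundary edges $(v-1,v)$ and $(v,v+1)$ of $C'$) and then adding back the two edges of $B'$ incident to $v$ that were removed when forming $B$. No other edges need to be restored, since $(v-1,v+1)\notin B'$ by the non-covered assumption. This matches the claimed construction and completes the proof.

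There is no real obstacle here; the only subtlety is confirming that ``minimum-sized'' yields ``saturated,'' which is immediate from the extremal value $n-3$ established in \cref{Obs:MinB_n-2} applied to $C$.
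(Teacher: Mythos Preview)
Your proposal is correct and follows the same approach as the paper, which simply says the corollary ``follows immediately from \cref{Obs:Deg2_non-covered}.'' Your write-up merely unpacks that immediacy, including the small extra observation that a minimum-sized blocker is automatically saturated (so that $B$ really is an $(n-1,n-3)$-blocker in the paper's terminology); this is a harmless elaboration of the same argument.
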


\begin{proof}
Follows immediately from \cref{Obs:Deg2_non-covered}. 
\end{proof}

\begin{observation}\label{Obs:non-covered}
For $k\geq n-2$, any $(n,k)$-blocker has at least two vertices which are not covered by ear-covers. 
\end{observation}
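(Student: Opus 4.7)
The plan is to argue by contradiction. Suppose $B$ is an $(n,k)$-blocker with $k\geq n-2$ having at most one uncovered vertex. Since for $n\geq 5$ each ear-cover covers a unique vertex, this forces $B$ to contain at least $n-1$ of the $n$ ear-covers of $C$, so in particular $|B|\geq n-1$. This handles the case $k=n-2$ immediately (any $(n,n-2)$-blocker has at most $n-2$ ear-covers, hence at least two uncovered vertices), so it suffices to show that $B$ cannot be saturated when it contains all $n$ ear-covers or exactly $n-1$ of them.

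The combinatorial input I will use is the classical fact that every triangulation of a convex polygon with at least four vertices contains at least two \emph{ears} -- triangles with two boundary edges -- each contributing an ear-cover of $C$ as its diagonal. This follows because the dual graph of a triangulation is a tree, whose leaves correspond to ears, and a tree with at least two nodes has at least two leaves. As a consequence, each triangulation uses at least two distinct ear-cover diagonals of $C$.

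If $B$ contains all $n$ ear-covers, I remove any ear-cover $e\in B$. For every triangulation, at most one of its (at least two) ear-cover diagonals can equal $e$, so at least one lies in $B\setminus\{e\}$. Hence $B\setminus\{e\}$ is still a blocker, contradicting the saturation of $B$.

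If $B$ contains exactly $n-1$ ear-covers, with the single missing one being $(v-1,v+1)$ for the unique uncovered vertex $v$, I remove the adjacent ear-cover $e=(v-2,v)\in B$. The key point is that the ear-covers at consecutive vertices $v-1$ and $v$, namely $(v-2,v)$ and $(v-1,v+1)$, cross each other. Hence no triangulation can use both, so at least one of each triangulation's ear-cover diagonals must lie outside $\{(v-2,v),(v-1,v+1)\}$ and therefore belong to $B\setminus\{e\}$. This shows that $B\setminus\{e\}$ is still a blocker, contradicting saturation and completing the proof. The only mild technicality is to verify that $(v-2,v)$ is a genuine edge distinct from $(v-1,v+1)$, which is immediate for $n\geq 5$.
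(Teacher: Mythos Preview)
Your proof is correct but follows a different route from the paper's. The paper argues in one line: a saturated blocker can contain at most $n-2$ ear-covers, because otherwise it would contain (a rotation of) the minimum-sized blocker $\{(0,2),(1,3),\dots,(n-3,n-1)\}$ as a proper subset and hence fail to be saturated; since (for $n\geq 5$) each ear-cover covers exactly one vertex, at least two vertices are left uncovered.

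You instead avoid invoking the structure of minimum-sized blockers and rely on the classical ``two ears'' fact for triangulations. Your Case~1 (all $n$ ear-covers present) and Case~2 ($n-1$ present, the missing one adjacent to the removed one, hence the two absent ear-covers cross) are both handled cleanly: in each case, every triangulation retains at least one ear-cover in $B\setminus\{e\}$, so saturation fails. The crossing check for $(v-2,v)$ and $(v-1,v+1)$ is correct for $n\geq 5$, and your remark that the distinctness of these edges needs $n\geq 5$ is appropriate (the observation is only used in the paper for $n>6$, with small cases treated separately).

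The trade-off: the paper's argument is shorter because it leans on a known example of a minimum blocker, while yours is more self-contained, deriving the non-saturation directly from the ear structure of triangulations without appealing to any prior classification. Both are perfectly valid; yours would be preferable in a context where Theorem~1.1 (or the specific blocker of Figure~1(b)) has not yet been established.
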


\begin{proof}
Any saturated blocker of a convex $n$-gon contains at most $n-2$ ear-covers, otherwise it contains the minimum-sized blocker $B=\{(0,2),(1,3),\dots,(n-3,n-1)\}$ as in \cref{Fig:ExBlockers1}(b) (or a rotation of it) as a subgraph, and hence it is not saturated. Therefore, the number of vertices is at least two more than the number of ear-covers.
\end{proof}
 
Observations \ref{Obs:Isolated},  \ref{Obs:DegAtMost2} and \ref{Obs:non-covered}  imply that in an $(n,n-1)$-blocker there exist at least two vertices that are not covered by ear-covers, and the degree of each such non-covered vertex is either $1$ or $2$. Therefore, we first characterize all $(n,n-1)$-blockers which include at least one non-covered vertex of degree $2$ in a constructive way using \cref{Cor:CanConstruct}, and then we complete the characterization of $(n,n-1)$-blockers by considering the case where all the non-covered vertices in an $(n,n-1)$-blocker are of degree $1$.

\subsection{The $(n',n'-1)$-blockers containing a non-covered vertex of degree~$2$}
\label{subsec:(n,n-1)}

Let $n'=n+1$. In this subsection, we characterize all $(n',n'-1)$-blockers $B'$ of a convex $n'$-gon $C'$ which can be obtained from a minimum-sized $(n,n-2)$-blocker $B$ for a convex $n$-gon $C$ by inserting a new vertex $v$ into $C$ and adding to $B$ two edges $e_1$ and $e_2$ emanating from $v$. The new vertex $v$ will indeed be a non-covered vertex in $C'$ of degree $2$. 

We consider in Propositions \ref{Claim:Beam}--\ref{Claim:MiddleBNNeighbours} below all the possible locations of this new vertex between two consecutive vertices of $C$, which gives us all possible appearances of $B'$.

The structure of the minimum-sized blocker $B$ satisfies the characterization shown in  \cref{Thm:MinBlockerIntro}, and we may assume without loss of generality that the boundary net of $B$ starts at vertex $0$ and ends at vertex $m+2$, while all the beams (if any) emanate from the vertices $m+3,\dots,n-1$. In general, we can insert the new vertex $v$ in one of the following four locations: 
\begin{enumerate}
\item[(1)] outside the boundary net; 
\item[(2)] between the two leftmost or the two rightmost vertices of the boundary net (in this case, the last ear-cover in the boundary net of $B$ becomes a beam in $B'$); 
\item[(3)] between the second and third vertices of a boundary net from the left or from the right (in this case, the two first/last ear-covers in $B$ become intersecting but not conflicting beams in $B'$);
\item[(4)] between any other consecutive vertices in the boundary net (which is possible only if the boundary net in $B$ is at least four edges long).
\end{enumerate}

We consider these cases one by one in the following four subsections (Sections \ref{sec:add_vertex_outside}-\ref{sec:add_vertex_inside}). In each case, we construct all possible $(n',n'-1)$-blockers $B'$ that can be constructed in this way, if there are such.

\subsubsection{First case: adding a new vertex outside the boundary net}\label{sec:add_vertex_outside}

Let $n'=n+1$. In the following proposition, we consider the case of adding the new vertex outside the boundary net: 

\begin{proposition}\label{Claim:Beam}
Given an $(n,n-2)$-blocker $B$, it is possible to obtain an $(n',n'-1)$-blocker $B'$ by inserting a vertex $v$ between two consecutive vertices {$i$, $i+1$ from the set $\{m+2,\dots,n-1,0\}$ and adding two edges $e_1$ and $e_2$ emanating from $v$} in {\em exactly} one of following two situations:
\begin{itemize}

\item[(1)] The beam $(i,\ell)\in B$ is intersected by a unique beam $(k,\ell-1)\in B$ for some $k<i$. In this case, the added edges will be  $e_1=(v,k)$ and $e_2=(v,\ell+1)$. 

\item[(2)] The beam $(i+1,\ell)\in B$ is intersected by a unique beam $(k,\ell+1)\in B$ for some $k>i+1$. In this case, the added edges will be $e_1=(v,k)$ and $e_2=(v,\ell-1)$. 
\end{itemize} 

\end{proposition}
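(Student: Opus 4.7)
The proof splits into two directions. For sufficiency, I will show that $B' = B \cup \{e_1, e_2\}$ as specified in situation~(1) is a saturated $(n', n'-1)$-blocker; situation~(2) follows by the symmetry exchanging the roles of $i$ and $i+1$ (and of $\ell-1$ and $\ell+1$). For necessity, I will show that any valid choice of two edges emanating from $v$ must match situation~(1) or situation~(2).

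To show that $B'$ is a blocker of $C'$ in situation~(1), classify a triangulation $T'$ of $C'$ by its fan at $v$. If $T'$ contains the ear-cover $(i, i+1)$, then $T' \setminus \{(i, i+1)\}$ is a triangulation of $C$, so $T'$ is blocked by $B \subseteq B'$. Otherwise, $v$ has $d \geq 1$ diagonals $(v, u_1), \ldots, (v, u_d)$, with $u_1, \ldots, u_d$ in order along the outer arc from $i+1$ to $i$. If $d = 1$, then $T' \setminus \{(v, u_1)\}$ is already a triangulation of $C$ (the two fan triangles at $v$ merge into $(i, i+1, u_1)$), so $T'$ is blocked by $B$. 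If $d \geq 2$, set $T_0 = T' \setminus \{(v, u_j) : j = 1, \ldots, d\}$ and let $P_v$ denote the $(d+2)$-gon on vertices $i, i+1, u_1, \ldots, u_d$ (in cyclic order). Combining $T_0$ with any triangulation of $P_v$ yields a triangulation of $C$, which is hit by $B$; hence if $T_0 \cap B = \emptyset$, then $B \cap E(P_v)$ is itself a blocker of $P_v$, so $|B \cap E(P_v)| \geq d$ by Observation~\ref{Obs:MinB_n-2}. Since the only $B$-edge incident to the beam vertex $i$ is $(i, \ell)$, and the $B$-edges incident to $i+1$ are either a single beam or the ear-cover $(0,2)$ in the case $i+1 = 0$, an enumeration of which diagonals of $P_v$ can lie in $B$, combined with the hypothesis that $(k, \ell-1)$ is the unique beam crossing $(i, \ell)$ (so in particular no beam $(j, \ell+1)$ with $j > i$ exists in $B$), forces either $k$ or $\ell+1$ to be among $\{u_1, \ldots, u_d\}$. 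Thus $T'$ contains $(v, k)$ or $(v, \ell+1)$, so $T' \cap B' \neq \emptyset$.

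For saturation, I will exhibit for each $e \in B'$ a triangulation $T'_e$ of $C'$ with $T'_e \cap B' = \{e\}$. For $e \in B \setminus \{(i,\ell), (k,\ell-1)\}$, I apply saturation of $B$ to obtain a triangulation $T$ of $C$ with $T \cap B = \{e\}$, and split the triangle of $T$ bordering the side $(i, i+1)$ via the new vertex $v$, verifying that the inserted diagonal is neither $(v, k)$ nor $(v, \ell+1)$. For $e \in \{(i, \ell), (k, \ell-1)\}$, which participate in the crossing pair, I construct a fan at $v$ that uses $e$ while bypassing the other crossing beam. For $e = (v, k)$, I construct the specific problematic fan identified in the blocker analysis together with an ambient $T_0$ avoiding $B$; the crossing-beam pair $(i, \ell), (k, \ell-1)$ and the uniqueness of this crossing make this construction possible. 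The case $e = (v, \ell+1)$ is analogous.

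For necessity, suppose $B' = B \cup \{e_1, e_2\}$ is a saturated $(n', n'-1)$-blocker with $v$ inserted between $i$ and $i+1$ outside the boundary net. Repeating the fan analysis with $\{e_1, e_2\}$ unknown, I find that every problematic fan forces $\{e_1, e_2\}$ to contain one of its diagonals $(v, u_j)$; the saturation condition then rules out edges that are not strictly needed. This pins $\{e_1, e_2\}$ down to the specific pair from situation~(1) or situation~(2), and simultaneously forces $B$ to contain the crossing-beam configuration with the stipulated uniqueness. The main obstacle in the entire argument is the fan analysis for $d \geq 2$: the polygon $P_v$ can be arbitrarily large, and one must carefully combine the constraint $|B \cap E(P_v)| \geq d$ with the limited $B$-edges incident to $i$ and $i+1$ and with the uniqueness/absence-of-crossings hypothesis of situation~(1) to deduce that a fan vertex must coincide with $k$ or $\ell+1$.
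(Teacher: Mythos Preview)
Your overall plan is reasonable, but the heart of both directions is left as a promise rather than a proof, and the route you chose makes that promise hard to keep.

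\textbf{Sufficiency.} Your fan analysis reduces the question to: if $T_0\cap B=\emptyset$, then the restriction of $B$ to the fan polygon $P_v$ is a blocker of $P_v$, hence has at least $d$ edges, and ``an enumeration'' then forces $k$ or $\ell+1$ among the $u_j$. You do not do this enumeration, and you yourself flag it as the main obstacle. It is not a routine count: the $u_j$ may include boundary-net vertices as well as beam vertices, so many ear-covers and beams of $B$ can appear as diagonals of $P_v$, and the degree-one information at $i$ and $i+1$ constrains only two of the $d+2$ vertices. One would have to invoke the full structure theorem for minimum blockers of $P_v$ together with the ``unique crossing beam'' hypothesis to push this through, and you have not indicated how. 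The paper bypasses this difficulty completely with a single splitting edge: observe that $(k,\ell)\notin B'$, and split according to whether a triangulation contains $(k,\ell)$. If it does not, it is blocked by the minimum-sized blocker $B'\cup\{(k,\ell)\}\setminus\{(k,\ell-1),(v,k)\}$ (redirecting the beam from $k$ to $\ell$ and keeping $(v,\ell+1)$ as a non-conflicting beam); if it does, one restricts to the sub-polygon $C'\setminus\{\ell+1,\ldots,k-1\}$, where the restriction of $B'$ is again a minimum blocker. No fan bookkeeping is needed.

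\textbf{Necessity.} Here your proposal is only a sentence (``repeating the fan analysis with $\{e_1,e_2\}$ unknown''), and it inherits the same unresolved obstacle. The paper's argument is structurally different and much shorter: it cases on whether an added edge ends outside the boundary net, at an endpoint, or at an interior vertex. The key device is Observation~\ref{Obs:Deg2_non-covered}: once $e_1=(v,k)$ with $k$ a beam vertex, the vertex $k$ is uncovered of degree~$2$ in $B'$, so $B'\setminus\{k\}$ must itself be a minimum blocker. The characterization of minimum blockers then pins down $e_2$ immediately, and saturation forces the conflict between $e_2$ and the beam $(k,\ell-1)$. The remaining cases (both added edges into interior boundary-net vertices, or one to an endpoint) are disposed of by exhibiting explicit missed triangulations or by showing $B'$ contains a minimum blocker. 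You never use this deletion trick, which is what makes the paper's argument short.

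In short: your fan framework is a legitimate alternative viewpoint, but as written it is a sketch with its hardest step missing; the paper's splitting-edge and vertex-deletion ideas are exactly the tools that remove the obstacle you identified.
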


Examples of the transformations defined by this proposition are presented in \cref{Fig:Claim_Beam_explanation}. Parts (a) and (b) in the figure correspond to case (1) and case (2) in the formulation of the proposition, respectively, where the added edges are drawn as bold lines. \begin{figure}[h]
    \centering
    \includegraphics[scale=0.3]{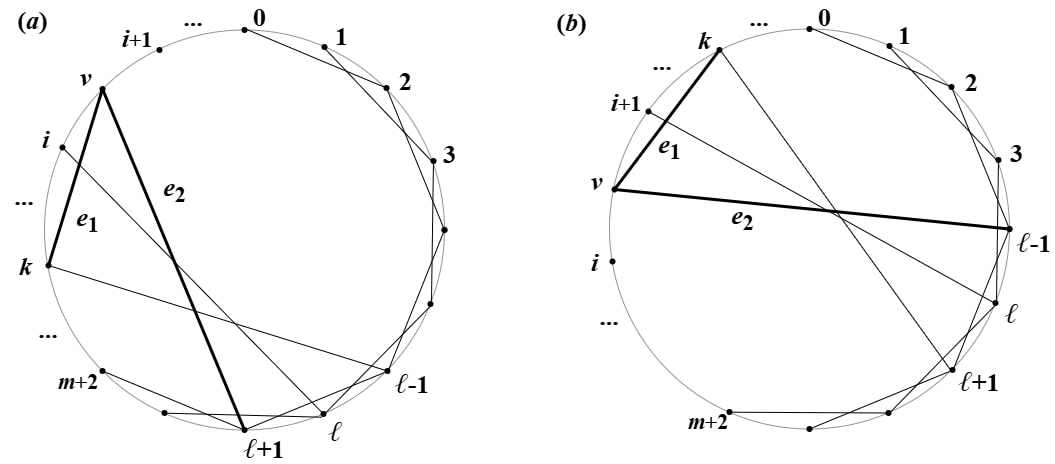}
    \caption{Examples of transforming $B$ to $B'$ in \cref{Claim:Beam} by inserting a vertex $v$ outside the boundary net, and adding two edges emanating from $v$. In both parts, the added edges $e_1$ and $e_2$ are drawn as bold lines.} 
 \label{Fig:Claim_Beam_explanation}
\end{figure}

\begin{proof}%[Proof of \cref{Claim:Beam}]
We prove this proposition by constructing all possible $(n',n'-1)$-blockers $B'$ from $B$ by adding a new vertex $v$ outside the boundary net and two edges {$e_1$ and $e_2$} emanating from $v$.
\medskip

For the added edge $e_1$, the following three options are possible for its other endpoint:

\begin{enumerate}
\item[(1)]{ The edge $e_1$ ends outside the boundary net, i.e., $e_1=(v,k)$ for $k>m+2$. By the construction of $B$, there exists a beam $(k,s)\in B$ for some $0<s<m+2$. There are two possibilities for $k$: either $k<i$ or $k>i+1$. We consider the case where $k<i$, and the other case (which leads to the second case in the formulation of Proposition \ref{Claim:Beam}) follows by symmetry.}

{If $k<i$, then we have that ${\rm deg}_{B'}(k)=2$ and $k$ is not covered by an ear-cover (as $k$ is not in the boundary net). 
Consider a blocker $B'\setminus\{k\}$ of the convex $n$-gon $C'\setminus\{k\}$. It is a (minimum-sized) $(n,n-2)$-blocker, where the other added edge $e_2$ can be either an ear-cover or a beam. In both cases, $e_2$ ends in an internal point of the boundary net of $B'$, without conflicting with any other beam, due to the characterization in \cref{Thm:MinBlockerIntro}.}

{We now split our treatment into two sub-cases, depending on the mutual position of $e_2$ and the edge $(k,s)$:}
\begin{itemize}
\item[(1a)]  {If $e_2$ does not conflict with $(k,s)$, then $B'$ is not saturated, since $B'\setminus \{e_1\}$ is also a (minimum-sized) $(n',n'-2)$-blocker.}
     
\item[(1b)]  {If $e_2$ conflicts only with $(k,s)$ (see \cref{Fig:Claim_Beam}(a)), then all the beams emanating from the set $\{k+1,\dots,i\}$ should have the same endpoint $\ell$ in between the end of $e_2$ and $s$ (as otherwise, they would conflict with $e_2$ or with $(k,s)$ in the minimum-sized $(n,n-2)$-blocker $B'\setminus\{k\}$ of the convex $n$-gon $C'\setminus\{k\}$). Moreover, for not causing conflicts apart from that between $e_2$ and $(k,s)$, the distance between their endpoints must be exactly 2, i.e., $(k,s)=(k,\ell-1)$ and $e_2=(v,\ell+1)$.}
\end{itemize}

\begin{figure}[h]
\centerline{\includegraphics[scale=0.3]{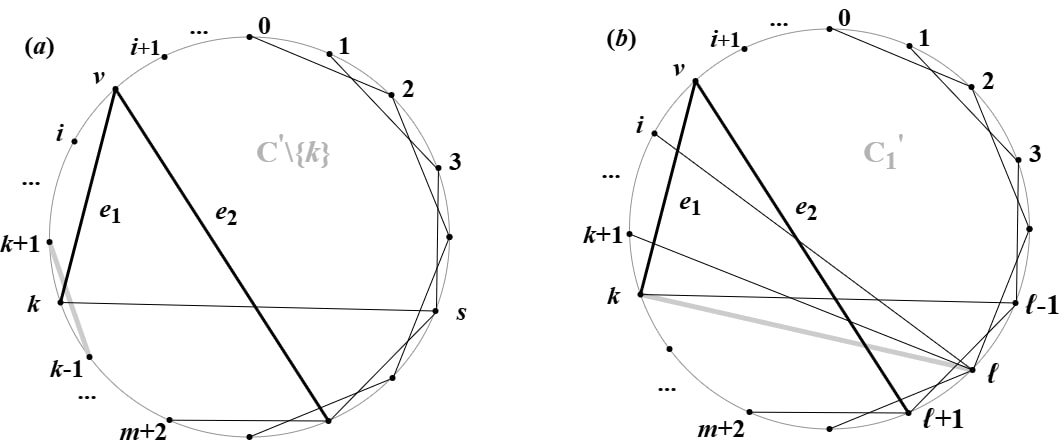}}
\caption{{(a) The construction of $B'$ for Case (1) in the proof of \cref{Claim:Beam}; (b) An illustration for the proof that $B'$ is a blocker for Case (1) in the proof of \cref{Claim:Beam}. In both parts, the added edges $e_1$ and $e_2$ are drawn as bold lines and the edges of the triangulation are drawn in gray.} }
\label{Fig:Claim_Beam}
\end{figure}

\medskip

{Now, we prove that $B'=B\cup\{e_1,e_2\}$ for $e_1=(v,k)$ and $e_2=(v,\ell+1)$ is indeed an $(n',n'-1)$-blocker. First, note that $(k,\ell)\notin B'$ (see \cref{Fig:Claim_Beam}(b)). Any triangulation that does not contain $(k,\ell)$ is blocked by the minimum-sized blocker $B'\cup \{(k,\ell)\}\setminus\{(k,\ell-1),(v,k)\}$, which satisfies the characterization in \cref{Thm:MinBlockerIntro}. Any triangulation that contains $(k,\ell)$ contains also a triangulation of the polygon $C'_1=C'\setminus\{\ell+1,\dots,k-1\}$. That triangulation of $C'_1$ is blocked by a restriction of $B'$ to $C'_1$ which is a minimum-sized blocker, where $(k,\ell-1)$ and $(k+1,\ell)$ turn into ear-covers, and as so, they are a part of the boundary net in the restricted blocker.}
      
{Furthermore, the blocker $B'$ is saturated, since any removal of an edge from $B'$ does not yield a minimum-sized $(n',n'-2)$-blocker, according to the characterization of  \cref{Thm:MinBlockerIntro}.}

\medskip

\item[(2)] The edge $e_1$ ends in an endpoint of the boundary net, i.e., $e_1=(v,0)$ or $e_1=(v,m+2)$. We consider the case  $e_1=(v,0)$ for some $v<n-1$, and the other case will follow by symmetry.

Let $e_1=(v,0)$ for some $v<n-1$. The other added edge, $e_2$, also emanates from $v$, and hence the vertex $0$ is not covered by an ear-cover in $B'$. We consider a triangulation of $C'$ containing the edge $(n-1,1)$ (see \cref{Fig:Claim_Beam_2}). Any triangulation of the $n$-gon $C'\setminus\{0\}$ is blocked by the blocker $B'_1=B'\setminus\{0\}$ which has $n-2$ edges. $B'_1$ is then a (minimum-sized) $(n,n-2)$-blocker which satisfies \cref{Thm:MinBlockerIntro}. Hence, the second endpoint of the last added edge $e_2 \in B_1'$ is one of the internal vertices of the boundary net, with no conflict with any of the existing edges of $B$. In such a case, $B'$ is not saturated, as we remove $(v,0)$ from it without damaging the blocking property, and hence one cannot construct an $(n',n'-1)$-blocker $B'$ when 
the edge $e_1$ ends in an endpoint of the boundary net.

\begin{figure}[h]
\centerline{\includegraphics[scale=0.3]{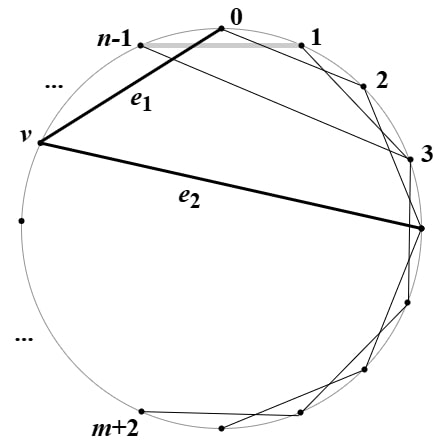}}
\caption{{The construction of $B'$ for Case (2) in the proof of \cref{Claim:Beam}. The added edges $e_1$ and $e_2$ are drawn as bold lines and the edges of the triangulation are drawn in gray.} }
\label{Fig:Claim_Beam_2}
\end{figure}

\medskip

\item[(3)] {The last case is when both edges $e_1$ and $e_2$ end in internal vertices of the boundary net, since if $e_2$ ends outside the boundary net, or in one of its endpoints, we could swap the roles of $e_1$ and $e_2$, and get one of the above cases. We split this case into three sub-cases:}
  
\begin{itemize}
\item[(3a)] {If at least one of the edges $e_1$, $e_2$ does not conflict with any other beam in $B'$, then $B'$ is not saturated, since removing the other added edge from $B'$ would result in a minimum-sized $(n',n'-2)$-blocker as in \cref{Thm:MinBlockerIntro}.}

\item[(3b)] {So we can assume that both edges conflict with some beam in $B'$. We first assume that both edges $e_1$ and $e_2$ conflict with the {\it same} beam $(k,s) \in B'$, where $k>m+2$ (see \cref{Fig:Claim_Beam_3}(a)). Without loss of generality, we can set $v>k$, and then note that $(k,s+1)\notin B'$ (by the characterization of \cref{Thm:MinBlockerIntro}). We claim that in this sub-case, one can construct a triangulation containing $(k,s+1)$ that is not blocked by $B'$. Indeed, in the upper part of $B'$, the vertex $v$ is isolated, and in its lower part, the vertex $k$ is isolated. This allows the construction of a triangulation of $C'$ that misses $B'$, thus showing that $B'$ is not a blocker in this sub-case.}

\begin{figure}[ht]
\begin{center}
\includegraphics[width=0.65\linewidth]{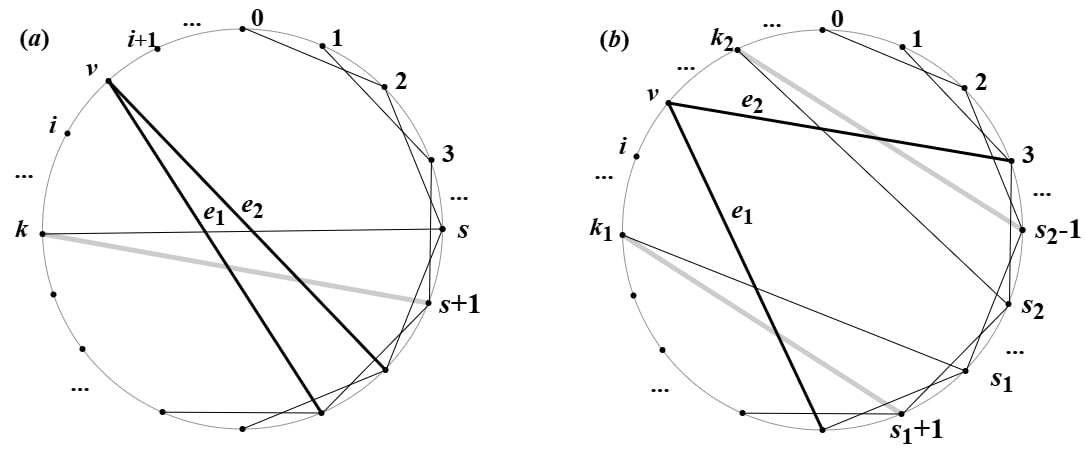}
\end{center}
\caption{{(a) The construction of $B'$ for case (3b) in the proof of \cref{Claim:Beam}, when both added edges conflict with the same beam $(k,s)$; (b) The construction of $B'$ for case (3c) in the proof of \cref{Claim:Beam}, when the added edges conflict with different beams $(k_1,s_1)$ and $(k_2,s_2)$. In both parts, the added edges $e_1$ and $e_2$ are drawn as bold lines and the edges of the triangulation are drawn in gray.} }
\label{Fig:Claim_Beam_3}
\end{figure}

\item[(3c)]{In the last sub-case, we assume that the edge $e_1$ conflicts with a beam $(k_1,s_1)\in B_1'$, and the edge $e_2$ conflicts with a different beam $(k_2,s_2) \in B'$, where $k_1>m+2$ and $k_2>m+2$ (see \cref{Fig:Claim_Beam_3}(b)). Moreover, without loss of generality, we can assume that $k_1<v<k_2$ (i.e., the vertices $k_1$ and $k_2$ are on opposite sides of $v$), as otherwise, one of these beams $(k_j,s_j)$ would conflict with both edges $e_1$ and $e_2$, and we return to Case (3b).  Now, similar to the previous case, note that $(k_1,s_1+1),(k_2,s_2-1)\notin B'$ (by the characterization of \cref{Thm:MinBlockerIntro}), and these two edges do not intersect even if $(k_1,s_1)$ intersects $(k_2,s_2)$ (as these two beams intersect only if $s_1-s_2=1$). Thus, there exists a triangulation containing $(k_1,s_1+1)$ and $(k_2,s_2-1)$ that is missed by $B'$. Indeed,  the middle part of $B'$ contains the isolated vertex $v$, and the lower and upper parts contain the isolated vertices $k_1$ and $k_2$, respectively. Hence, in this sub-case as well, $B'$ is not a blocker.}
\end{itemize}

{Therefore, one cannot construct an $(n',n'-1)$-blocker $B'$ in the case when 
the edges $e_1$ and $e_2$ ends in internal vertices of the boundary net.}
\end{enumerate}

\vspace{-25pt}\end{proof}

\subsubsection{Second case: adding a new vertex between the two last vertices of the boundary net}

Let $n'=n+1$. The following proposition characterizes the $(n',n'-1)$-blockers which can be obtained from a blocker $B$ by inserting the new vertex $v$ of degree $2$ in between the two rightmost or the two leftmost vertices of the boundary net: 
\begin{proposition}\label{Claim:EndNeighbours}
Given an $(n,n-2)$-blocker $B$, whose boundary net starts at vertex $0$ and ends at vertex $m+2$. 
\begin{enumerate}
\item[(a)] It is possible to obtain an $(n',n'-1)$-blocker $B'$ by inserting a new vertex $v$ between the two vertices $0$ and $1$ and adding two edges $e_1$ and $e_2$ emanating from $v$ if and only if $\deg_B(1)=2$. In this case, the added edges will be $e_1=(v,3)$ and $e_2=(v,k)$, where $k>m+2$ and $(k,1)\in B$. 
\item[(b)] It is possible to obtain an $(n',n'-1)$-blocker $B'$ by inserting a new vertex $v$ between the two vertices $m+1$ and $m+2$ and adding two edges $e_1$ and $e_2$ emanating from $v$ if and only if $\deg_B(m+1)=2$. In this case, the added edges will be $e_1=(v,m-1)$ and $e_2=(v,k)$, where $k>m+2$ and $(k,m+1)\in B$.
\end{enumerate}  
\end{proposition}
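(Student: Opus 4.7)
By symmetry, it suffices to prove part (a); the argument for (b) follows by reflecting around the middle of the boundary net. \cref{Cor:CanConstruct} reduces the task to determining which pairs $(e_1,e_2)$ of edges incident to $v$ extend $B$ to a saturated blocker of $C'$. Both directions hinge on analyzing the pentagon $P=\{v,1,2,3,4\}$ in $C'$: the chord diagonals $(1,3)$ and $(2,4)$ of $P$ lie in $B$ (either as ear-covers of the boundary net, or as beams in the edge case $m=1$ forced by \cref{Obs:Isolated}), so the essentially unique triangulation of $P$ avoiding $B$ is the fan $\{(v,2),(v,3)\}$.

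For the sufficiency direction, given $\deg_B(1)=2$ with the unique beam $(k,1)\in B$, I verify that $B'=B\cup\{(v,3),(v,k)\}$ is a blocker by splitting on whether a triangulation $T$ of $C'$ contains the ear-cover $(0,1)$ of $C'$: if so, then $T\setminus\{(0,1)\}$ is a triangulation of $C$, blocked by $B$; otherwise, if $(v,4)\in T$ the pentagon argument yields $(v,3)\in T$, so $e_1$ blocks. The remaining sub-case $(v,4)\notin T$ is handled by tracing the triangle incident to the boundary edge $(v,1)$: its third vertex $p$ leads to a forced chain where $p=k$ produces $(1,k)=(k,1)\in B\cap T$, $p=3$ produces $(v,3)\in T$, and the remaining values of $p$ force sub-polygon triangulations that necessarily contain an ear-cover of $B$. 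For saturation, for each $f\in B'$ I exhibit a triangulation of $C'$ intersecting $B'$ only at $f$: for $f=(v,3)$, a triangulation with pentagon-part $\{(v,2),(v,3)\}$ and a fan from $0$ on the right works; for $f=(v,k)$, a triangulation routing through the chord $(2,k)$ and the edge $(v,k)$ avoids $B$ while containing $(v,k)$; and for $f\in B$, the saturation certificates of the minimum blocker $B$ furnished by \cref{Thm:MinBlockerIntro} lift to triangulations of $C'$.

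For the necessity direction, I first rule out the possibility that either $e_1$ or $e_2$ is one of the two ear-covers of $C'$ incident to $v$ (namely $(v,2)$ and $(v,n-1)$). If $e_1=(v,2)$ (WLOG), the pentagon argument implies every triangulation of $C'$ avoiding $B$ contains $(v,2)$, so $B\cup\{(v,2)\}$ is already a blocker and $e_2$ is redundant, contradicting saturation; the case $e_1=(v,n-1)$ is handled analogously using $\deg_B(0)=1$. Consequently, by \cref{Obs:DegAtMost2} applied at vertex $1$, we have $\deg_B(1)\leq 2$, and the pentagon argument combined with the exclusion of $(v,2)$ forces $e_1=(v,3)$. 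If $\deg_B(1)=1$, every triangulation of $C'$ avoiding $B$ still contains $(v,3)=e_1$, making $e_2$ redundant; hence $\deg_B(1)=2$. Finally, saturation at $e_2$ requires a triangulation $T$ with $e_2\in T$, $T\cap B=\emptyset$, and $(v,3)\notin T$; since no sub-triangulation of $P$ with $(v,3)\notin T$ avoids $B$, we must have $(v,4)\notin T$, and tracing the triangles at $v$ and $1$ then shows that avoiding $B$ requires both the chord $(2,k)$ and the edge $(v,k)$, pinning $e_2=(v,k)$.

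The main technical hurdle lies in the sub-case $(v,4)\notin T$: here $P$ is not a self-contained sub-triangulation of $T$, so the constraints must be propagated by a careful case analysis on the triangles at $v$ and $1$ along the diagonals emanating from these vertices. The analysis relies on the non-crossing structure of the beams of $B$ (\cref{Thm:MinBlockerIntro}) and the uniqueness of the beam at vertex $1$ (from $\deg_B(1)=2$) to conclude that any such triangulation either contains an edge of $B$ or contains the added edge $e_2=(v,k)$.
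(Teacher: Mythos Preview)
Your proposal has a genuine error in the necessity direction. You claim to rule out the possibility that either added edge equals $(v,n-1)$, arguing this ``is handled analogously using $\deg_B(0)=1$.'' But this is false: the proposition explicitly allows $e_2=(v,k)$ for any $k>m+2$ with $(k,1)\in B$, and $k=n-1$ is a perfectly valid value. Concretely, take $n=6$, $m=1$, $B=\{(0,2),(1,3),(4,2),(5,1)\}$; then $\deg_B(1)=2$ via the beam $(5,1)$, and the proposition produces the $(7,6)$-blocker $B'=B\cup\{(v,3),(v,5)\}$ in which $e_2=(v,5)=(v,n-1)$ \emph{is} the ear-cover of $0$ in $C'$. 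Your asserted analogy would require $B\cup\{(v,n-1)\}$ to already be a blocker of $C'$ (so that the other edge becomes redundant), but it is not: unlike $(v,2)$, the edge $(v,n-1)$ does not extend the boundary net, and one readily finds triangulations of $C'$ avoiding $B\cup\{(v,n-1)\}$. The situations of $(v,2)$ and $(v,n-1)$ are simply not symmetric, because $1$ is an internal vertex of the boundary net while $0$ is its endpoint.

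A second, less fatal issue is that your ``pentagon argument'' is repeatedly asked to do more than it can. It only constrains triangulations containing $(v,4)$; you cannot conclude from it alone that every triangulation of $C'$ avoiding $B$ contains $(v,2)$ (the claim is true, but the real reason is that $B\cup\{(v,2)\}$ is a minimum blocker of $C'$ in the sense of \cref{Thm:MinBlockerIntro}, with $(v,2)$ extending the boundary net and $(0,2)$ becoming a non-conflicting beam). The paper's proof avoids all of your triangle-chasing by using precisely this observation, together with the fact that $B'\setminus\{1\}$ is a minimum blocker of $C'\setminus\{1\}$; splitting on whether $(v,2)\in T$ then handles both the blocking and the saturation verifications cleanly, and the forced values of $e_1,e_2$ fall out of the characterization in \cref{Thm:MinBlockerIntro} applied to $B'\setminus\{1\}$.
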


Examples of the two $(n', n'-1)$-blockers whose construction is described in \cref{Claim:EndNeighbours} are shown in \cref{Fig:Claim_End_Explanation}: part (a) illustrates Case (a) of the proposition, and part (b) illustrates Case (b) of the proposition.

Note that all the beams emanating from vertices between $k$ and $v$ are connected to the same endpoint which is vertex $2$ in Case (a) (or $m$ in Case (b)), as otherwise they would conflict with the beam emanating from $k$ in the initial blocker $B$. 

Below we prove Case (a) of \cref{Claim:EndNeighbours}, and Case (b) follows by symmetry, when replacing the triple of vertices $0,1,3$ by the triple $m+2,m+1,m-1$ respectively. 
%The proof of \cref{Claim:EndNeighbours} appears in \cref{app:Full_Char} as well.

\begin{figure}[h]
\centering
\includegraphics[scale=0.33]{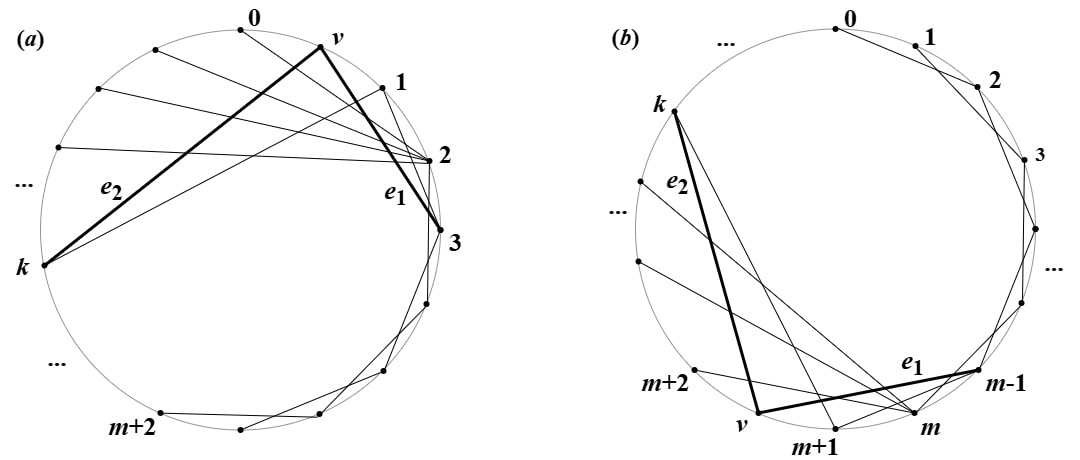}
\caption{Examples of transforming $B$ to $B'$ by inserting a new vertex $v$ in between two end vertices of a boundary net, and adding two edges emanating from $v$, as in Case (a) of \cref{Claim:EndNeighbours} (part (a)) and in Case (b) of \cref{Claim:EndNeighbours}  (part (b)). In both parts, the added edges $e_1$ and $e_2$ are drawn as bold lines.} 
\label{Fig:Claim_End_Explanation}
\end{figure}

\begin{proof}[Proof of Case (a) of \cref{Claim:EndNeighbours}]
We start with the observation that the edge $(v,2)$ cannot be one of the added edges, as otherwise it will be the new last ear-cover of the boundary net in $B'$, and $(0,2)$ is the new beam which does not conflict with any other beam inherited from $B$. Hence, $B'$ is not saturated, as it is possible to remove the other added edge from $B'$.

Moreover, if $\deg_B(1)>2$, then $B'$ is not a blocker, since $B'\setminus\{1\}$ has too few edges ($\leq n-3$) to block all possible triangulations of $C'\setminus\{1\}$ containing $(v,2)\notin B'\setminus\{1\}$.

\medskip

Next, we consider the following two remaining cases for $\deg_B(1)$:

\begin{enumerate}
\item[(1)] If $\deg_B(1)=1$, then there are two sub-cases:
\begin{itemize}
\item[(1a)] If $e_1=(v,3) \in B_1'$, then $e_1$ is a new beam, which does not conflict with any other beam in $B'$ including $(0,2)$, and thus, $B'$ is not saturated, since $B'\setminus \{e_2\}$ for any edge $e_2$ is an $(n',n'-2)$-blocker according to the characterization in   \cref{Thm:MinBlockerIntro}. 
\item[(1b)]
{If $(v,3) \notin B'$, then we consider a triangulation of $C'$ which includes $(v,3)$, and which should be blocked by $B'$.} It follows that either any triangulation of $C'_1=C'\setminus\{1,2\}$ is blocked by $B'_1=B'\setminus\{1,2\}$, or any triangulation of $C'_2=C'\setminus\{0,4,5,\dots,n-1\}$ is blocked by $B'_2=B'\setminus\{0, 4,5,\dots,n-1\}$. In $B'_1$, the vertex $0$ is isolated (since $(v,0)\notin B'$), and $C'_2$ cannot be blocked by $B'_2$ as it is a quadrilateral, and its diagonal $(v,2)$ is not in $B'$ (see \cref{Fig:Claim_End}(a)). Hence, $B_1'$ is not a blocker. 
\end{itemize}

Hence, if  $\deg_B(1)=1$, it is not possible to construct an $(n',n'-1)$-blocker.

\begin{figure}[h]
\centering
\includegraphics[scale=0.33]{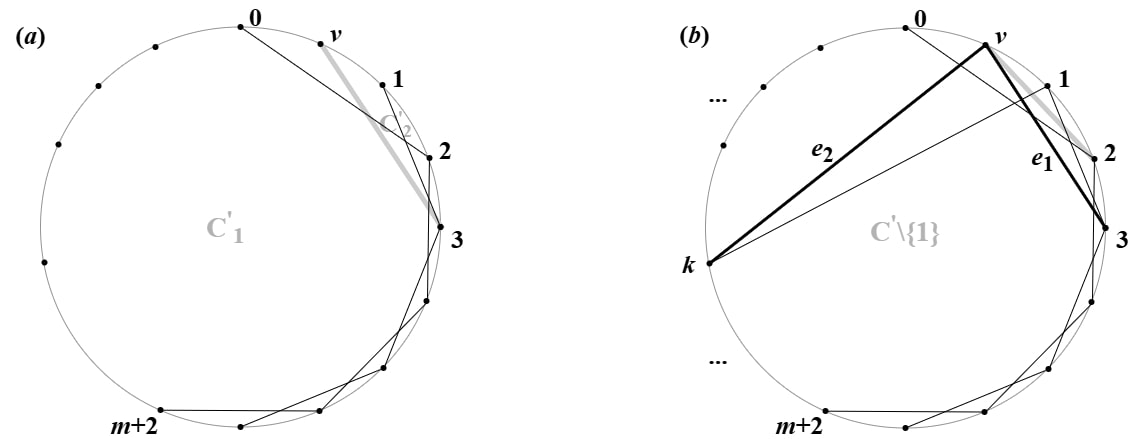}
\caption{Transforming $B$ to $B'$ by inserting a new vertex $v$ in between two end vertices of a boundary net, and adding two edges emanating from $v$ as in \cref{Claim:EndNeighbours}: (a) in the case $\deg_B(1)=1$; (b) in the case $\deg_B(1)=2$. In part (b), the added edges $e_1$ and $e_2$ are drawn as bold lines. In both parts, the edges of the triangulation are drawn in gray.} 
\label{Fig:Claim_End}
\end{figure}

\medskip

\item[(2)] If $\deg_B(1)=2$, then there exists an edge $(k,1)\in B'$. 
By the observation in the beginning of the proof, the choice $e_1=(v,2)$ yields a blocker which is not saturated. %\delete{Therefore, we fix $e_1=(v,3)$.} 
{Therefore, we can consider a triangulation of $C'$ which contains $(v,2)$ (see \cref{Fig:Claim_End}(b)). This triangulation is blocked by $B'\setminus\{1\}$ which is a (minimum-size) $(n,n-2)$-blocker. The only way to complete such a blocker, which satisfies the characterization of \cref{Thm:MinBlockerIntro}, by adding two edges emanating from $v$, is to add edges $e_1=(v,3)$ and $e_2=(v,k)$. The reason for that is that in the $(n,n-2)$-blocker $B'\setminus\{1\}$, the edge $e_2$ cannot conflict with any other beam that $(1,k)$ was not conflicting with in $B$ and the edge $e_1$ completes the boundary net. }

The resulting $B'$ is indeed a saturated $(n',n'-1)$-blocker: any triangulation which includes $(v,2)$ is blocked by $B'\setminus\{1\}$. Any triangulation which does not include $(v,2)$ is blocked by $B'$, as $B\subset B'$ and $B\cup\{(v,2)\}$ is a minimum-sized $(n', n'-2)$-blocker for $C'$. Also, $B'$ is saturated by its construction. 
\end{enumerate}

\vspace{-12pt}\end{proof}

\subsubsection{Third case: adding a new vertex between the second and the third vertices of the boundary net}

Let $n'=n+1$. In the following proposition, we assume that the boundary net in the initial blocker $B$ has at least $3$ edges, as otherwise, inserting the new vertex in between the second and the third vertex of the boundary net is equivalent to inserting it between the two last vertices of the other side of the boundary net, which is Case (b) of \cref{Claim:EndNeighbours} above: 

\begin{proposition}\label{Claim:NearEndNeighbours}
Given an $(n,n-2)$-blocker $B$, whose boundary net starts at vertex $0$ and ends at vertex $m+2$, where we assume that $m\geq 2$. 
\begin{enumerate}
\item[(a)] It is possible to obtain an  $(n',n'-1)$-blocker $B'$ by inserting a new vertex $v$  between the two vertices $1$ and $2$ and adding two edges $e_1$ and $e_2$ emanating from $v$ if and only if $\deg_B(2)=2$. In this case, the added edges will be $e_1=(v,0)$ and $e_2=(v,4)$.  

\item[(b)] It is possible to obtain an  $(n',n'-1)$-blocker $B'$ by inserting a new vertex $v$  between the two vertices $m$ and $m+1$ and adding two edges $e_1$ and $e_2$ emanating from $v$ if and only if $\deg_B(m)=2$. In this case, the added edges will be $e_1=(v,m-2)$ and $e_2=(v,m+2)$.  
\end{enumerate}

\end{proposition}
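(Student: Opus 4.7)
The plan is to prove Case (a); Case (b) then follows by the reflection that swaps the triple $(0,1,3)$ with $(m+2,m+1,m-1)$. Following the template of the proof of \cref{Claim:EndNeighbours}, my approach is a case analysis on where the two added edges $e_1,e_2$ can land, ruling out every possibility except $\{(v,0),(v,4)\}$ and then verifying sufficiency.

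The analysis proceeds in four short steps. First, $(v,1)$ and $(v,2)$ are boundary edges of $C'$, so they are not candidates for $e_1$ or $e_2$. Second, I would rule out the ear-cover $(v,3)$ of vertex $2$ in $C'$: if $e_1=(v,3)$, then $B\cup\{(v,3)\}$ has the canonical structure of \cref{Thm:MinBlockerIntro} on $C'$---its boundary net becomes $(v,3),(2,4),\dots,(m,m+2)$, and the former BN ear-covers $(0,2),(1,3)$ of $B$ re-interpret as non-conflicting beams from the outside vertices $0$ and $1$ to the internal BN vertices $2$ and $3$---hence $B'\setminus\{e_2\}$ is already a minimum-sized blocker, contradicting the saturation of $B'$. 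Third, with $(v,3)\notin B'$, vertex $2$ is non-covered in $C'$, so \cref{Obs:DegAtMost2} forces $\deg_{B'}(2)\leq 2$; combined with the two edges $(0,2),(2,4)\in B$ this gives $\deg_B(2)=2$, establishing necessity. Fourth, assuming $\deg_B(2)=2$, \cref{Obs:Deg2_non-covered} applied to vertex $2$ says that $B'\setminus\{2\}$ is a minimum-sized blocker of $C'\setminus\{2\}$. Reading off its structure via \cref{Thm:MinBlockerIntro}, the inherited ear-covers $(1,3),(3,5),\dots,(m,m+2)$ form two disjoint contiguous ear-cover segments in $C'\setminus\{2\}$ separated exactly by the would-be ear-cover $(v,4)$, so contiguity of the BN forces $(v,4)\in B'$; likewise, deleting $2$ strips vertex $0$ of its only old edge $(0,2)$, so to avoid isolating $0$ in $B'\setminus\{2\}$, \cref{Obs:Isolated} forces $(v,0)\in B'$ as well. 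Since $v$ has exactly two edges in $B'$, this pins down $\{e_1,e_2\}=\{(v,0),(v,4)\}$. Sufficiency is then verified by splitting triangulations of $C'$ according to whether they use the ear $(v,3)$ at vertex $2$ and handling each sub-case via \cref{Thm:MinBlockerIntro}, and by exhibiting explicit fan-triangulations to witness that each edge of $B'$ is necessary for the blocking property.

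The step I expect to be the main obstacle is the reduction in the second step, namely the claim that $B\cup\{(v,3)\}$ actually sits in the canonical form of \cref{Thm:MinBlockerIntro}. The subtle point is that any beam of $B$ ending at vertex $1$ has its endpoint outside the internal vertices $\{2,\dots,m+1\}$ of the reconfigured boundary net, so it is no longer an admissible beam there; one must argue either that such beams are ruled out under the hypothesis of the proposition or that the resulting $B'$ is already accounted for by a cyclic re-framing belonging to \cref{Claim:MiddleBNNeighbours}, so that no $(n',n'-1)$-blockers are missed by the present proposition.
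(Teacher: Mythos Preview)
Your approach is essentially the same as the paper's: both rule out $(v,3)$ by observing that $B\cup\{(v,3)\}$ is already a minimum-sized blocker (so $B'$ would not be saturated), then use \cref{Obs:DegAtMost2}/\cref{Obs:Deg2_non-covered} at the now-uncovered vertex $2$ to force $\deg_B(2)=2$ and to read off $e_1=(v,0)$, $e_2=(v,4)$ from the structure of $B'\setminus\{2\}$. The subtlety you flag about beams of $B$ ending at vertex $1$ is exactly the point the paper addresses in the remark preceding its proof, where it argues such beams are excluded because they would conflict with the beam emanating from vertex $n-1$ in the canonical framing of $B$.
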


Examples of the two $(n', n'-1)$-blockers whose construction is described in \cref{Claim:NearEndNeighbours} are shown in \cref{Fig:Claim_NEnd_Explanation}: part (a) illustrates Case (a) of the proposition and part (b) illustrates Case (b) of the proposition.

\begin{figure}[h]
\centering
\includegraphics[scale=0.3]{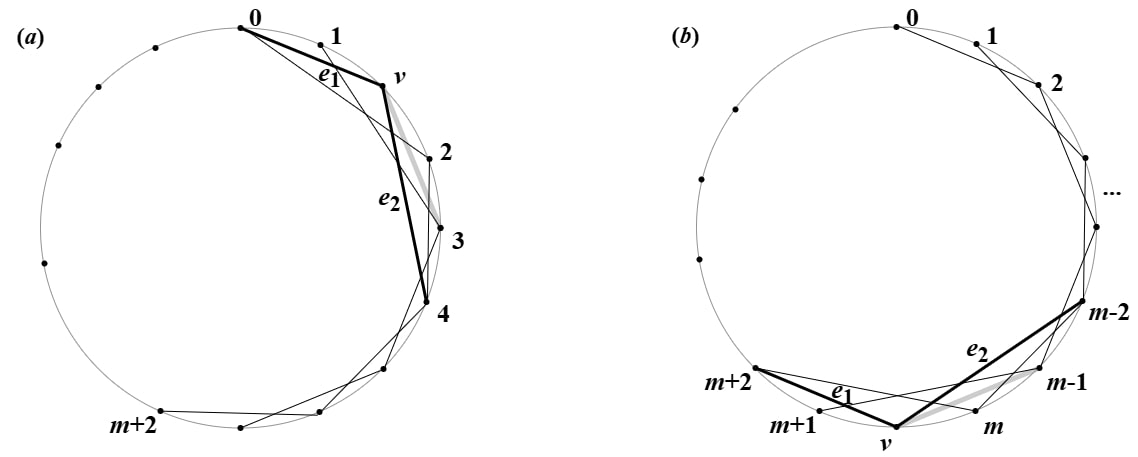}
\caption{Examples of transforming $B$ to $B'$ by inserting a new vertex $v$ in between vertices: (a) $1$ and $2$; (b) $m$ and $m+1$ of the boundary net, and adding two edges emanating from $v$, as in \cref{Claim:NearEndNeighbours}. In both parts, the added edges $e_1$ and $e_2$ are drawn as bold lines and the edge of the triangulation is drawn in gray.} 
\label{Fig:Claim_NEnd_Explanation}
\end{figure}

Note that there are no beams connected to vertex $1$ in Case (a) (resp., $m+1$ in Case (b)), otherwise they conflict with the beam emanating from $n-1$ in Case (a) (resp., $m+3$ in Case (b)) in the initial blocker $B$.

Below we prove Case (a) of \cref{Claim:NearEndNeighbours}, and Case (b) follows by symmetry, when replacing  the vertices $(0,1,2,3,4)$ by $(m+2,m+1,m,m-1,m-2)$. 

\begin{proof}[Proof of Case (a) of \cref{Claim:NearEndNeighbours}]
We start with the observation that the edge $(v,3)$ cannot be one of the added edges, as otherwise $B'$ is not saturated, since $B'\setminus\{e_2\}$ satisfies \cref{Thm:MinBlockerIntro} for any edge $e_2$. So we consider a triangulation of $C'$ which contains $(v,3)$, and we conclude the following (see \cref{Fig:Claim_NEnd_Explanation}): 
\begin{itemize}
\item $\deg_B(2)=2$: $\deg_B(2)\geq 2$, as the vertex $2$ is an internal vertex of the boundary net and hence two ear-covers emanate from $2$. Also,  $\deg_B(2)\leq 2$, as otherwise $B'\setminus\{2\}$ has less than $n-2$ edges and cannot be a blocker to $C'\setminus\{2\}$. Therefore, $B'\setminus\{2\}$ is a minimum-sized $(n,n-2)$-blocker.

\item It is necessary that $e_1=(v,0)$, as otherwise the vertex $0$ would be isolated in $B'\setminus\{2\}$.

\item It is necessary that $e_2=(v,4)$: the vertex $4$ satisfies exactly one of the following two conditions: 
\begin{enumerate}
\item[(1)] The vertex $4$ is an internal vertex of the boundary net of the blocker $B$, and then according to \cref{Thm:MinBlockerIntro}, the minimum-sized $(n,n-2)$-blocker $B'\setminus\{2\}$ should contain the ear-cover $e_2=(v,4)$.
\item[(2)] The vertex $4$ is an endpoint of the boundary net of the blocker $B$, and therefore the vertex $4$ would be an  isolated vertex in the $(n,n-2)$-blocker $B'\setminus\{2\}$, unless it is, following \cref{Thm:MinBlockerIntro}, connected to vertex $v$ or to the vertex $1$. As the added edge $e_2$ should emanate from $v$, the only possibility is: $e_2=(v,4)$.    
\end{enumerate} 
\end{itemize}
     
Therefore, the added edges in this case must be $e_1=(v,0)$ and $e_2=(v,4)$. 

Now, we have to show that $B'=B\cup\{(v,0),(v,4)\}$ is indeed a saturated $(n',n'-1)$-blocker. 
Indeed, any triangulation which includes $(v,3)$ is blocked by $B'\setminus\{2\}$. Any triangulation which does not include $(v,3)$ is blocked by $B\cup\{(v,3)\}$, which is a minimum-sized $(n',n'-2)$-blocker for $C'$. Also, $B'$ is saturated by its construction. 
\end{proof}

\subsubsection{Fourth case: adding a new vertex in the middle of the boundary net}
\label{sec:add_vertex_inside}

Let $n'=n+1$. The following proposition completes all possible ways to construct an $(n',n'-1)$-blocker with a non-covered vertex of degree $2$. Here, we consider the case of inserting the new vertex $v$ in the middle of the boundary net of an $(n,n-2)$-blocker $B$, such that there are at least $3$ vertices of the boundary net from each side of $v$. This case is only possible if the boundary net of the original blocker $B$ consists of at least $4$ ear-covers (i.e. $m\geq 3$). 

\begin{proposition}\label{Claim:MiddleBNNeighbours}
Given an $(n,n-2)$-blocker $B$, whose boundary net starts at vertex $0$ and ends at vertex $m+2$, where we assume that $m\geq 3$. 

It is possible to obtain an $(n',n'-1)$-blocker $B'$ by inserting a new vertex $v$ in between two consecutive vertices $i, i+1$ from the set $\{2,3,\dots,m\}$, and adding two edges $e_1$ and $e_2$ emanating from $v$, in exactly one of the following three ways (see \cref{Fig:Claim_Middle_Explanation}):
\begin{enumerate}
\item[(1)] If the added edges are $e_1=(v,i-1)$ and $e_2=(v,i+2)$.
\item[(2)] If $\deg_B(i+1)=2$, then the added edges are $e_1=(v,i-1)$ and $e_2=(v,i+3)$.
\item[(3)] If $\deg_B(i)=2$, then the added edges are $e_1=(v,i-2)$ and $e_2=(v,i+2)$.
\end{enumerate}
\end{proposition}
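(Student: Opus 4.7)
The plan is to follow the template of the proofs of Propositions~\ref{Claim:Beam}, \ref{Claim:EndNeighbours}, and \ref{Claim:NearEndNeighbours}: perform a case analysis on which of the two potential ``local ear-covers'' $(v,i-1)$ and $(v,i+2)$ of $C'$ belong to $B'$. The resulting four sub-cases will correspond to the three claimed configurations plus one impossible sub-case.

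For the necessity direction, the sub-case $(v,i-1),(v,i+2)\in B'$ immediately gives configuration~(1), since only two edges are added. For the sub-case $(v,i-1)\in B'$, $(v,i+2)\notin B'$, I would consider any triangulation $T'$ of $C'$ using the diagonal $(v,i+2)$; this splits $C'$ into the triangle $(v,i+1,i+2)$ and a large piece on $n$ vertices. Since $(v,i+2)\notin B'$, the blocker property of $B'$ forces the restriction $\tilde B$ of $B'$ to the large piece, after relabeling $v\to i+1$, to be a blocker for $C$. An edge count gives $|\tilde B|=(n-2)-\deg_B(i+1)+2$, and $|\tilde B|\geq n-2$ forces $\deg_B(i+1)=2$; then $\tilde B$ is a minimum-sized blocker of $C$ and must match \cref{Thm:MinBlockerIntro}. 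Writing $e_1=(v,a)$, $e_2=(v,b)$, one observes that after removing the two ear-covers at $i+1$, the boundary net of $\tilde B$ has precisely two gaps, at $(i-1,i+1)$ and $(i+1,i+3)$, which must be filled by the relabels $(i+1,a),(i+1,b)$; this forces $\{e_1,e_2\}=\{(v,i-1),(v,i+3)\}$, giving configuration~(2). Configuration~(3) follows by the symmetric argument on the other side (swap $i\leftrightarrow i+1$, $(v,i+2)\leftrightarrow(v,i-1)$, and relabel $v\to i$). The final sub-case, with both $(v,i-1),(v,i+2)\notin B'$, is impossible, since the structures forced by the arguments for configurations~(2) and~(3) cannot coexist.

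For the sufficiency direction, I would verify in each of configurations~(1)--(3) that $B'$ is both a blocker and saturated. The blocker property follows by case analysis on a triangulation $T'$ of $C'$: if $(i,i+1)\in T'$ the triangulation reduces to one of $C$ blocked by $B\subseteq B'$; otherwise $v$ has at least one diagonal in $T'$, and either this diagonal equals $e_1$ or $e_2$, or the sub-polygon $S_L\subseteq C'$ cut off on the left by the extremal $v$-diagonal $(v,y_L)$ with $y_L\leq i-2$ (resp.\ the symmetric sub-polygon $S_R$ on the right) has all its ear-covers lying in $B$'s boundary net, so the fact that the complete ear-cover set of any polygon is a blocker forces at least one ear-cover of $B$ into $T'|_{S_L}$ (or $T'|_{S_R}$). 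For saturation, given $e\in B$, I would use the saturation of the minimum-sized blocker $B$ to pick a triangulation $T$ of $C$ with $T\cap B=\{e\}$ and extend it to $T'=T\cup\{(i,i+1)\}$, which misses $B'\setminus\{e\}$ since $(i,i+1)\notin B'$ and $e_1,e_2$ involve $v\notin V(T)$; for $e\in\{e_1,e_2\}$, cut $C'$ along $e$, note via a similar edge count that the restriction of $B'\setminus\{e\}$ to the large piece has fewer than $n-2$ edges and hence is not a blocker, and combine a triangulation of the large piece missing this restriction with an appropriately chosen diagonal in the small piece that lies outside $B'$.

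The main obstacle is the sub-polygon ear-cover argument in the blocker verification: one must carefully identify which sub-polygon of $C'$ to analyze based on where the extremal $v$-diagonals land, handle cleanly the case in which $v$ has diagonals on only one side, and verify that the relevant sub-polygon's ear-covers all lie in $B$'s boundary net. In the necessity argument, the delicate step is confirming that the contiguity condition of \cref{Thm:MinBlockerIntro} uniquely determines $e_1$ and $e_2$ in configurations~(2) and~(3), as any other choice of the endpoint would leave an unfillable gap in the boundary net of $\tilde B$.
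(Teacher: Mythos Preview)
Your necessity argument and your saturation argument are essentially the paper's, with only cosmetic reorganization: the paper first shows that at least one of $(v,i-1),(v,i+2)$ must be added (via the same edge count on $B'\setminus\{i,i+1\}$ that your ``fourth sub-case'' hides), then branches on the other added edge; you branch on the full $2\times2$ table. Both routes reduce to \cref{Thm:MinBlockerIntro} in the same way and yield the same degree constraints $\deg_B(i+1)=2$ resp.\ $\deg_B(i)=2$.

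The genuine gap is in your blocker verification. Your plan is to cut $C'$ by the extremal $v$-diagonal $(v,y_L)$ (or $(v,y_R)$) and argue that the resulting sub-polygon $S_L$ ``has all its ear-covers lying in $B$'s boundary net'', so that the complete ear-cover blocker of $S_L$ sits inside $B$. This claim is false once the fan of $v$ in $T'$ lands outside the boundary net, i.e.\ at some $y\in\{m+3,\dots,n-1\}$, and nothing in configurations~(1)--(3) prevents that. For a concrete failure, take $n=10$, $m=3$, beams $(6,4),(7,4),(8,4),(9,4)$, insert $v$ between $2$ and $3$ in configuration~(1), and let $T'$ have the single $v$-diagonal $(v,7)$. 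There is no $y_L\le i-2=0$, so only $S_R$ is available, and the relevant sub-polygon on the right is the pentagon $\{3,4,5,6,7\}$; of its consecutive ear-covers $(3,5),(4,6),(5,7)$ only $(3,5)$ lies in $B$'s boundary net. The triangulation \emph{is} blocked, but only because the beams $(6,4),(7,4)$ happen to sit in this pentagon --- something your ear-cover argument never invokes. On the other side, the hexagon $\{7,8,9,0,1,2\}$ meets $B$ only in $(0,2)$ and is not blocked at all. So this is not a matter of ``carefully identifying which sub-polygon to analyze'': any argument here must use the beams of $B$, not just its boundary net.

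The paper sidesteps this entirely with a one-line reduction. For configuration~(1) it observes that $B'\cup\{(i,i+1)\}\setminus\{(i-1,i+1),(i,i+2)\}$ is a minimum-sized $(n',n'-2)$-blocker of $C'$ (its boundary net is the old one with the two order-$3$ edges replaced by the three new ear-covers $(i-1,v),(i,i+1),(v,i+2)$, and the beams are unchanged), so any $T'$ with $(i,i+1)\notin T'$ is already blocked by $B'$. Configuration~(2) adds one more such reduction using $(v,i+2)$ in place of $(i,i+1)$. Replacing your sub-polygon argument by this observation fixes the proof with no other changes.
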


\medskip

\begin{figure}[h]
\centering
\includegraphics[width=\linewidth]{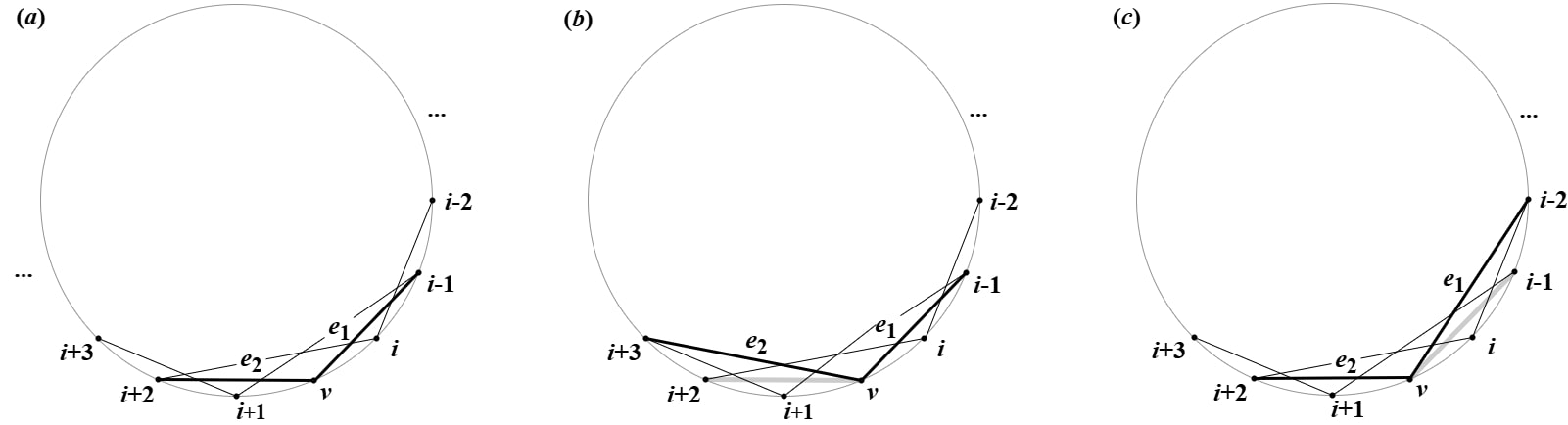}
\caption{Possible transformations of $B$ to $B'$ by inserting a new vertex $v$ in between internal vertices of the boundary net and adding two edges $e_1$ and $e_2$ emanating from $v$ as in \cref{Claim:MiddleBNNeighbours}.  The edges $\{(i-2,i)$, $\dots$, $(i+1,i+3)\}$ are part of the boundary net in $B$, including the cases where $i=2$ or $i=m-1$. In all parts, the added edges $e_1$ and $e_2$ are drawn as bold lines, and the edge of the triangulation is drawn by gray.} 
\label{Fig:Claim_Middle_Explanation}
\end{figure}

\medskip

\begin{proof}%[Proof of \cref{Claim:MiddleBNNeighbours}]
We start by noting that $\deg_B(i)\geq 2$ and $\deg_B(i+1)\geq 2$ (as the vertices $i$ and $i+1$ are in the middle of the boundary net, and hence two ear-covers emanate from each of them). Hence, at least one of added edges $e_1$ or $e_2$ must be an ear-cover, as otherwise, it is not possible to block all triangulations of $C'$ which include $(v,i-1)$ and $(v,i+2)$ by $B'$, since $B'\setminus\{i,i+1\}$ contains at most $n'-5$ edges, which are too few edges to block all triangulations of the $(n'-2)$-gon $C'\setminus\{i,i+1\}$. 

Therefore, we fix $e_1=(v,i-1)$ to be an ear-cover (where the other case $e_1=(v,i+2)$ follows by symmetry). We consider three possible cases to choose the endpoint of the edge $e_2$ that emanates from $v$:
\begin{enumerate}
\item[(1)] $e_2=(v,i+2)$ (see \cref{Fig:Claim_Middle_Explanation}(a)). Then $B'=B \cup \{e_1,e_2\}$ is an $(n',n'-1)$-blocker. Indeed, any triangulation which includes the diagonal $(i,i+1)$ is blocked by the original blocker $B$, and any triangulation which does not include this diagonal is blocked by $B'$, since $B'\cup\{(i,i+1)\}$ contains the minimum-sized $(n',n'-2)$-blocker $B'\cup\{(i,i+1)\}\setminus\{(i,i+2),(i-1,i+1)\}$, which satisfies the characterization of \cref{Thm:MinBlockerIntro}. 

Also, $B'$ is saturated, since any removal of an edge from $B'$ gives a graph which does not satisfy the characterization of \cref{Thm:MinBlockerIntro}. Hence, Case (1) of the proposition is proven.

\medskip

\item[(2)] $e_2\neq(v,i+2)$ (see \cref{Fig:Claim_Middle_Explanation}(b)). This case is possible only if $\deg_B(i+1)=2$, otherwise $i+1$ is a non-covered vertex satisfying $\deg_B(i+1)>2$, which is impossible in an $(n',n'-1)$-blocker by Observation \ref{Obs:DegAtMost2}.

So we assume that $e_2\neq(v,i+2)$ and $\deg_B(i+1)=2$. Then, we consider a triangulation of $C'$ which includes the diagonal $(v,i+2)$. This triangulation 
%is completed by 
contains a triangulation of $C'\setminus\{i+1\}$ and is blocked by $B'\setminus\{i+1\}$ which is a minimum-sized $(n,n-2)$-blocker. This blocker should satisfy the characterization of \cref{Thm:MinBlockerIntro}, which is possible only if $e_2=(v,i+3)$. Indeed, the vertex $i+3$ belongs to the boundary net in $B$, and is either the endpoint of the boundary net (and then it is isolated in  $B'\setminus\{i+1\}$, unless it is connected to $v$), or its middle point (and then it should be a middle point of the boundary net also in $B'\setminus\{i+1\}$ due to the characterization of \cref{Thm:MinBlockerIntro}). 

We claim that the set of edges $B'=B\cup\{(v,i-1),(v,i+3)\}$ is a saturated $(n',n'-1)$-blocker. Indeed, any triangulation which includes the diagonal $(i,i+1)$ is blocked by the original blocker $B$, any triangulation which includes the diagonal $(v,i+2)$ is blocked by $B'$ as it contains $B'
\setminus\{i+1\}$ which is a blocker by construction, and any triangulation which does not include both diagonals $(i,i+1)$ and $(v,i+2)$ is blocked by $B'$, since $B'\cup\{(i,i+1),(v,i+2)\}$ contains the minimum-sized $(n',n'-2)$-blocker $B'\cup\{(i,i+1), (v,i+2)\}\setminus\{(i-1,i+1),(i,i+2),(v,i+3)\}$, which satisfies the characterization of  \cref{Thm:MinBlockerIntro}. 

Furthermore, $B'$ is saturated, since any removal of an edge from $B'$ gives a set of edges which does not satisfy the characterization of \cref{Thm:MinBlockerIntro}. Hence, Case (2) of the proposition is also proven.
\end{enumerate}

Case (3) of the proposition follows by symmetry from Case (2), when replacing the
vertices $(0,1,2,\dots,$ $n-1)$ by $(m+2,m+1,m,\dots,m+3)$ (see \cref{Fig:Claim_Middle_Explanation}(c)).
\end{proof}

\subsubsection{Complete  characterization of $(n,n-1)$-blockers having at least one non-covered vertex of degree $2$ using the subgraphs ``seagull'', ``butterfly'' and ``bouquet''}

Now, we are ready to characterize all  $(n,n-1)$-blockers, which contain at least one non-covered vertex of degree $2$. By Corollary \ref{Cor:CanConstruct},
we have that any such blocker $B'$ of the $n$-gon $C'$ can be obtained from some minimum-sized $(n-1,n-3)$-blocker $B$ by adding a new vertex $v$ to it and two edges emanating from this new vertex $v$.
Propositions \ref{Claim:Beam}--\ref{Claim:MiddleBNNeighbours} give us three possible subgraphs, called ``seagull'', ``butterfly'' and ``bouquet'', defined in the formulation of Theorem \ref{Thm:Main-Intro} (see Figure~\ref{Fig:Bouquet1}), which appear in (saturated) $(n,n-1)$-blockers containing a non-covered vertex of degree $2$ and obtaining from a minimum-sized blocker by adding a vertex $v$ to it and two edges emanating from it, and do not appear in minimum-sized $(n,n-2)$-blockers:

\medskip

\begin{remark}\label{obs:3types}
    Any $(n,n-1)$-blocker, constructed using Propositions \ref{Claim:Beam}--\ref{Claim:MiddleBNNeighbours} above, 
    %which contains at least one non-covered vertex of degree $2$, 
    includes either a single ``bouquet'' subgraph, a single ``butterfly'' subgraph, or a single ``seagull'' subgraph:   

\begin{itemize}
\item \cref{Claim:Beam} provides a ``bouquet'' subgraph (see Figure~\ref{Fig:Bouquet1}(c)) with an arbitrary positive number of beams, and a base which can be any ear-cover in the boundary net of the initial blocker.
Note that if there is more than one beam in a ``bouquet'' subgraph, then all of the beams start from a single vertex $\ell$ covered by a base. This is inevitable, as otherwise, by the construction from the initial minimum-sized blocker, the added beam (here, $(k,\ell-1)$ or $(k+t,\ell+1)$) would conflict with more than one other beam. 

\item \cref{Claim:EndNeighbours} (resp., \cref{Claim:NearEndNeighbours}) provides a ``bouquet'' subgraph with a base (i.e., the edge $(\ell-1,\ell+1)$) being the last ear-cover in the boundary net, and with an arbitrary number, greater than $1$, of beams (resp., exactly $2$ beams).

\item A ``seagull'' subgraph (see Figure~\ref{Fig:Bouquet1}(a)) appears only in Case (1) of \cref{Claim:MiddleBNNeighbours}. In this case, both vertices $\ell-2$ and $\ell+2$ are covered by ear-covers in $B'$. 

\item Cases (2) and (3) of  \cref{Claim:MiddleBNNeighbours} give a ``butterfly'' subgraph (see Figure~\ref{Fig:Bouquet1}(b)), where at least one of the vertices $\ell-2$ and $\ell+3$, or both, are covered by an ear-cover in $B'$. Also, if the vertices $\ell-2$ or $\ell+3$ is not covered by an ear-cover, then this ``butterfly'' subgraph is equivalent to a ``bouquet'' subgraph with a single beam $(\ell-1,\ell+2)$.
\end{itemize}

\end{remark}

\begin{remark}\label{remark butterfly bouquet}
Note that a ``butterfly'' subgraph can be also considered as a ``bouquet'' subgraph with the single beam inside its ``vase'' (i.e., inside the subgraph $\{(\ell-1,\ell+1),(k,\ell-1),(k+t,\ell+1), (k,k+t)\}$). To avoid ambiguities, we classify an $(n,n-1)$-blocker with a ``butterfly'' subgraph as a blocker of Type 3, if one of the end vertices of the ``butterfly'' (i.e., either $\ell-2$ or $\ell+3$) is not covered by an ear-cover.
\end{remark}

Based on Corollary  \ref{Cor:CanConstruct}, we can now supply a complete characterization for $(n,n-1)$-blockers which contain at least one non-covered vertex of degree $2$: 
\begin{theorem}\label{Thm:Result_deg2}
%For any $n\geq 7$, 
Any $(n,n-1)$-blocker which contains at least one non-covered vertex of degree $2$ is of one of the three types appearing in the formulation of \cref{Thm:Main-Intro}.

\end{theorem}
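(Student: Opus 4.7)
The plan is to use \cref{Cor:CanConstruct} as the main reduction tool. Given an $(n,n-1)$-blocker $B'$ with a non-covered vertex $v$ of degree $2$, \cref{Cor:CanConstruct} says that $B' = B \cup \{e_1,e_2\}$, where $B$ is a minimum-sized $(n-1,n-3)$-blocker on $C' \setminus \{v\}$ and $e_1,e_2$ are the two edges of $B'$ emanating from $v$. By \cref{Thm:MinBlockerIntro}, $B$ has the explicit form of a boundary net (which, after rotation, runs from vertex $0$ to vertex $m+2$) together with non-conflicting beams attached to the interior vertices of the net. The task is therefore to classify all pairs $(B,\{e_1,e_2\})$ that yield a saturated blocker and translate each into one of the three types of \cref{Thm:Main-Intro}.

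The next step is to stratify the possibilities by the position of $v$ relative to the boundary net of $B$. There are exactly four mutually exclusive cases for the pair of vertices of $B$ between which $v$ is inserted: (i) both lie outside the boundary net; (ii) they are the two extreme vertices of the boundary net; (iii) they are the second and third vertices from an end of the boundary net; (iv) they are internal vertices of the net at distance at least $2$ from both endpoints. These four cases are handled, respectively, by \cref{Claim:Beam}, \cref{Claim:EndNeighbours}, \cref{Claim:NearEndNeighbours} and \cref{Claim:MiddleBNNeighbours}, each of which enumerates the few possible choices of $(e_1,e_2)$ that actually produce a saturated blocker.

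From each such concrete description, I would read off the local subgraph around $v$ and identify it, via \cref{obs:3types} and \cref{remark butterfly bouquet}, with a seagull, butterfly, or bouquet as in \cref{Fig:Bouquet1}. Specifically: \cref{Claim:Beam} produces a bouquet whose apex lies on a base ear-cover of $B$, with the common-apex structure forced because otherwise the added beam would conflict with more than one existing beam; \cref{Claim:EndNeighbours} and \cref{Claim:NearEndNeighbours} give bouquets whose base is an extreme ear-cover of the boundary net; Case~(1) of \cref{Claim:MiddleBNNeighbours} produces a seagull; and Cases~(2)--(3) of \cref{Claim:MiddleBNNeighbours} produce a butterfly, except when one of its endpoint vertices (the $\ell-2$ or $\ell+3$ of the butterfly) is not covered by an ear-cover in $B'$, in which case \cref{remark butterfly bouquet} tells us to reclassify the configuration as a Type~3 bouquet with a single internal beam. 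The remaining edges of $B'$, i.e.\ those outside the local seagull/butterfly/bouquet, are precisely the incomplete boundary net $B_1$ and the set of beams $B_2$ described in each type, since they are inherited unchanged from $B$ via \cref{Thm:MinBlockerIntro}.

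The main technical obstacle is the bookkeeping: one must verify that every output of \cref{Claim:Beam}--\cref{Claim:MiddleBNNeighbours} matches \emph{exactly} one of the three formal types of \cref{Thm:Main-Intro}, with the correct index ranges, the correct non-crossing condition on pairs of beams $(m+2+j_1,i_{j_1}),(m+2+j_2,i_{j_2})$ with $|i_{j_1}-i_{j_2}|\geq 2$, and the correct treatment of degenerate cases in which $B_{1,L}$, $B_{1,R}$, $B_{2,L}$ or $B_{2,R}$ is empty. The convention of \cref{remark butterfly bouquet} is essential at this stage, since a butterfly with an uncovered endpoint admits two equivalent descriptions and must be assigned to only one of Type~2 and Type~3. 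Once this matching is carried out case by case, the theorem follows because \cref{Cor:CanConstruct} combined with the four propositions exhausts all ways in which a non-covered degree-$2$ vertex can sit inside an $(n,n-1)$-blocker.
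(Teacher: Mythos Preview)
Your proposal is correct and follows essentially the same route as the paper: reduce via \cref{Cor:CanConstruct} to a minimum-sized blocker plus two added edges, split according to where $v$ lands relative to the boundary net, invoke \cref{Claim:Beam}--\cref{Claim:MiddleBNNeighbours} for the four positions, and then match each output to a seagull, butterfly or bouquet using \cref{obs:3types} and the convention of \cref{remark butterfly bouquet}. The only ingredient you omit is the paper's separate treatment of the small cases $n\leq 6$ (needed because the four propositions each impose a lower bound on the size of the underlying $(n-1)$-gon), but this is a routine verification that fits naturally into the bookkeeping you already flag.
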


\begin{proof}
We first verify the theorem for $n\leq 6$: 
\begin{itemize}
\item For $n=4$, the only possible $(4,2)$-blocker contains both diagonals, and there are no $(4,3)$-blockers.
\item For $n=5$, any edge is an ear-cover, and there are no $(5,4)$-blockers, see \cref{app:small_cases}.
\item For $n=6$ (a case which was analyzed in detail in Appendix~\ref{app:small_cases}), the only $(n,n-1)$-blocker, shown in \cref{Fig:Thm2_Mincases}(a) below,  is of Type 3.
\end{itemize}  
Hence, in the sequel, we can assume that $n\geq 7$. Let $B'$ be an $(n,n-1)$-blocker of the $n$-gon $C'$, which contains an uncovered vertex $v$ satisfying $\deg_{B'}(v)=2$. By  Corollary \ref{Cor:CanConstruct}, there exists a minimum-sized $(n-1,n-3)$-blocker  $B=B'\setminus\{v\}$, satisfying that $B'$  can be obtained from $B$ by adding to it a new vertex $v$ and two edges emanating from this new vertex $v$.
All the possible ways to insert into convex $(n-1)$-gon $C$ a new vertex $v$ and adding two edges $e_1$ and $e_2$ to an $(n-1,n-3)$-blocker $B$ emanating from this new vertex $v$ in order to obtain an $(n,n-1)$-blocker $B'$ of a convex $n$-gon $C'$ are already described in Propositions \ref{Claim:Beam}--\ref{Claim:MiddleBNNeighbours}. 
Before considering the resulting $(n,n-1)$-blockers by the transformations presented in these propositions, we verify that all these propositions are indeed applicable to construct an $(n,n-1)$-blocker for all $n \geq 7$:
\begin{itemize}
\item  For applying \cref{Claim:Beam}, the initial $(n-1,n-3)$-blocker $B$ should have at least two ear-covers and two intersecting beams, and thus, it should have at least six vertices (i.e., $n-1 \geq 6$). 
\item For applying \cref{Claim:EndNeighbours}, the blocker $B$ should again have at least two ear-covers and two beams, and hence, once again it should have at least six vertices  (i.e., $n-1 \geq 6$). 
\item For \cref{Claim:NearEndNeighbours}, the blocker $B$ should have at least three ear-covers, and hence $n-1\geq 5$. 
\item For \cref{Claim:MiddleBNNeighbours}, the blocker $B$ should have at least four ear-covers, and hence, $n-1\geq 6$. 
\end{itemize}

\medskip

Next, we consider the resulting $(n,n-1)$-blockers obtained by all the possible transformations that can be applied for all $n\geq 7$ (see Remarks \ref{obs:3types} and \ref{remark butterfly bouquet} above):
\begin{itemize}
\item For $n\geq 7$, the application of \cref{Claim:Beam} to an $(n-1,n-3)$-blocker $B$ yields an arbitrary $(n,n-1)$-blocker of Type 3, with any non-zero number of beams inside and outside the ``bouquet'', and an  arbitrary $B_1$. 

The minimal case, $n=7$, is shown in \cref{Fig:Thm2_Mincases}(b). In this case, the initial $(6,4)$-blocker consists of two ear-covers and two intersecting beams (see Figure \ref{Fig:Ex6}(c) below).

\item For \cref{Claim:EndNeighbours}, the initial blocker $B$ has at least two ear-covers and two beams ($(n-1,2)$ and $(k,1)$). For $n\geq 7$, the transformation gives an $(n,n-1)$-blocker of Type 3 with an arbitrary number of beams inside and outside the ``bouquet'', and either $B_{\rm 1,L}=B_{\rm 2,L}=\emptyset$ or $B_{\rm 1,R}=B_{\rm 2,R}=\emptyset$. %hence 

The minimal case, $n=7$, is shown in \cref{Fig:Thm2_Mincases}(c). In this case, the blocker is of Type 3, where $B_1$ and $B_2$ are empty sets. 

\item Any transformation according to \cref{Claim:NearEndNeighbours} for $n\geq 7$, gives an $(n,n-1)$-blocker of Type 3 with the ``bouquet'' having only one beam inside, and either $B_{\rm 2,L}$ or $B_{\rm 2,R}$ is empty. 

The minimal case, $n=7$, is presented in \cref{Fig:Thm2_Mincases}(d). In this case, it is a blocker of Type 3 where $B_1$ and $B_2$ are empty sets. 

\item \cref{Claim:MiddleBNNeighbours} gives the following outcomes: Case (1) results in an arbitrary $(n,n-1)$-blocker of Type 1, while Cases (2) and (3) result in either a blocker of Type 2, or a blocker of Type 3, where either $B_{\rm 1,L}=B_{\rm 2,L}=\emptyset$ or $B_{\rm 1,R}=B_{\rm 2,R}=\emptyset$, and  $\Bouq$ contains only one beam (see Remark \ref{remark butterfly bouquet} above). 
 
The minimal case for Case (1), $n =7$, is shown in \cref{Fig:Thm2_Mincases}(e). The minimal case for Cases (2) and (3) yields a blocker similar to \cref{Fig:Thm2_Mincases}(b), where the added vertex is $v=4$.  
\end{itemize}

\vspace{-19pt}\end{proof}

\begin{figure}[H]
\centering
\includegraphics[width=0.9\linewidth]{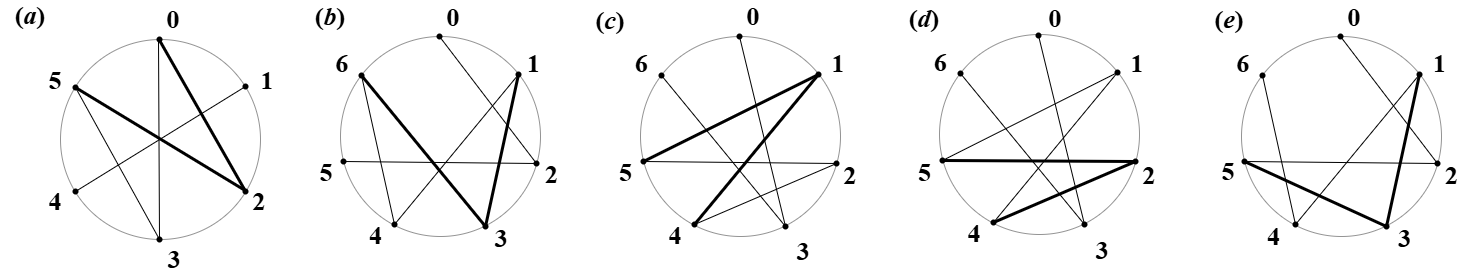}
\caption{The minimal cases of $(n,n-1)$-blockers constructed by the transformations in Propositions~\ref{Claim:Beam} -- \ref{Claim:MiddleBNNeighbours}: (a) for $n=6$; (b) for \cref{Claim:Beam}, $n=7$; (c) for \cref{Claim:EndNeighbours}, $n=7$; (d) for \cref{Claim:NearEndNeighbours}, $n=7$; (e) for Case (1) of \cref{Claim:MiddleBNNeighbours}, $n=7$. In all parts, the added edges are drawn as bold lines.} \label{Fig:Thm2_Mincases}
\end{figure}

\medskip

We conclude this subsection with the following observation, which will be used in the next subsection:

\begin{observation}\label{Obs:Types_2}
    Out of all $(n,n-1)$-blockers appearing in \cref{Thm:Main-Intro}, only the blocker of Type 1 (which contains a ``seagull'') has a single non-covered vertex of degree $2$. The $(n,n-1)$-blockers of Type 2 and Type 3 have at least two such vertices each.
\end{observation}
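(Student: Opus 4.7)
The plan is a direct case analysis based on the structural descriptions given in \cref{Thm:Main-Intro}. For each of the three types, I would enumerate the non-covered vertices of $B$ (i.e., those not covered by any ear-cover in $B$) and compute $\deg_B$ for each one, by inspecting which edges of the distinguished subgraph (seagull/butterfly/bouquet), the boundary-net portion $B_1$, and the beam set $B_2$ are incident to the vertex.

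For Type 1, the non-covered vertices are $0$ and $m+2$ (the endpoints of the boundary net), the center $\ell$ of the seagull, and each beam vertex $m+3,\ldots,n-1$. Each endpoint and each beam vertex has degree $1$, while $\ell$ is incident to exactly the two seagull edges $(\ell-2,\ell)$ and $(\ell,\ell+2)$ and to no beam, since the allowed beam endpoints lie in $\{1,\ldots,\ell-1,\ell+1,\ldots,m+1\}$ and in particular exclude $\ell$; thus $\deg_B(\ell)=2$, and $\ell$ is the unique non-covered vertex of degree $2$.

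For Type 2, the same reasoning identifies two non-covered vertices of degree $2$: vertex $\ell$ is incident to exactly the butterfly edges $(\ell-2,\ell)$ and $(\ell,\ell+3)$, and vertex $\ell+1$ is incident to exactly $(\ell-2,\ell+1)$ and $(\ell+1,\ell+3)$. Neither is covered by an ear-cover, since $(\ell-1,\ell+1)$ and $(\ell,\ell+2)$ are absent from both $B_1$ and the butterfly, and neither receives a beam because the allowed beam endpoints lie in $\{1,\ldots,\ell-1,\ell+2,\ldots,m+1\}$. For Type 3, the vertices $k$ and $k+t$ play the analogous role: each is incident to exactly two bouquet edges (namely $(k,\ell-1),(k,k+t)$ at $k$ and $(k+t,\ell+1),(k,k+t)$ at $k+t$) and to no beam, since the beams of $B_{2,L}\cup B_{2,R}$ originate from vertices in $\{m+3,\ldots,k-1\}\cup\{k+t+1,\ldots,n-1\}$. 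Neither $k$ nor $k+t$ is covered by an ear-cover of $B_1$, which lies within the index range $[0,m+2]$, while $k>m+2$ by assumption.

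The only subtlety to watch is that in degenerate sub-cases of Type 3 (for instance, $\ell=1$ forcing $B_{1,R}=\emptyset$, or $\ell=m+1$ forcing $B_{1,L}=\emptyset$), additional non-covered vertices such as $0$ or $m+2$ may arise and even attain degree $2$; this only reinforces the ``at least two'' conclusion for Type 3 and so presents no real obstacle. Combining the three case analyses yields the observation.
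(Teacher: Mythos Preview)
Your case analysis is correct and complete. The paper states this observation without any proof at all---it is left to the reader as a direct consequence of the explicit descriptions in \cref{Thm:Main-Intro}---so what you have written is precisely the kind of inspection the paper implicitly relies on, just made explicit. One minor point: for Type~3 you might note in passing that the edge $(k,k+t)$ can itself be an ear-cover (when $t=2$), but it then covers $k+1$, not $k$ or $k+t$, so your conclusion that $k$ and $k+t$ are non-covered is unaffected.
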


\subsection{The complete characterization of all $(n,n-1)$-blockers}
\label{subsec:other-case}

In this subsection, we complete the characterization of all $(n,n-1)$-blockers, by showing that each $(n,n-1)$-blocker must contain a non-covered vertex of degree $2$, which implies that the characterization given in Section~\ref{subsec:(n,n-1)} actually exhausts all $(n,n-1)$-blockers.

\medskip

For this, we start by proving the following technical lemma, which shows that if there exists an $(n,n-1)$-blocker with no non-covered vertices of degree $2$, then it would admit a very specific structure:
\begin{lemma}\label{Lem:NotMin}
Let $B'$ be an $(n,n-1)$-blocker having no non-covered vertices of degree $2$. If $v$ is a vertex satisfying $\deg_{B'}(v)=1$ and $(v-1,v+1)\notin B'$, then $B=B'\setminus\{v\}$ is an $(n-1,n-2)$-blocker.
\end{lemma}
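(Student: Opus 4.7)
The plan is to verify the two defining properties of an $(n-1,n-2)$-blocker for $B$: that it blocks every triangulation of the $(n-1)$-gon $C'\setminus\{v\}$, and that it is minimal with respect to inclusion. The size $|B|=n-2$ is automatic since $\deg_{B'}(v)=1$ and $(v-1,v+1)\notin B'$, so exactly one edge of $B'$ is removed in passing to $B$.

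For the \emph{blocker} property, I would take an arbitrary triangulation $T$ of $C'\setminus\{v\}$ and lift it to the triangulation $T':=T\cup\{(v-1,v+1)\}$ of $C'$ by including the ear-cover at $v$. Since $B'$ blocks $T'$, some edge $f\in T'\cap B'$ exists. The only edge of $B'$ incident to $v$ is $(v,u)$, and no diagonal of $T'$ is incident to $v$; moreover $(v-1,v+1)\notin B'$ by hypothesis. Consequently $f\in T\cap B$, so $B$ blocks $T$.

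For \emph{saturation}, I plan to argue by contradiction. Suppose some $e\in B$ satisfies that $B\setminus\{e\}$ is still a blocker of $C'\setminus\{v\}$. Because $|B\setminus\{e\}|=n-3$ is the minimum blocker size of an $(n-1)$-gon, \cref{Thm:MinBlockerIntro} endows $B\setminus\{e\}$ with the precise boundary-net-plus-beams structure. Using saturation of $B'$, I pick a triangulation $T'$ of $C'$ missed by $B'\setminus\{e\}$. Since $(v,u)\in B'\setminus\{e\}$ (as $e\in B$ and $B$ has no edge at $v$), the triangulation $T'$ cannot contain $(v,u)$. If $(v-1,v+1)\in T'$, then $T:=T'\setminus\{(v-1,v+1)\}$ is a triangulation of $C'\setminus\{v\}$ with $T\cap(B\setminus\{e\})\subseteq T'\cap(B'\setminus\{e\})=\emptyset$, directly contradicting that $B\setminus\{e\}$ is a blocker.

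The remaining case is when $(v-1,v+1)\notin T'$, so $T'$ has one or more diagonals $(v,w_1),\ldots,(v,w_k)$ from $v$ with every $w_i\neq u$. My plan is to replace the fan at $v$ by the ear $\{(v-1,v+1)\}$ together with a triangulation $T_P$ of the convex sub-polygon $P$ on vertex set $\{v-1,v+1,w_1,\ldots,w_k\}$, producing a new triangulation $T''$ of $C'$ that still avoids $B'\setminus\{e\}$ but now contains $(v-1,v+1)$, and hence falls into the previous case, giving the desired contradiction. The construction of $T''$ requires picking $T_P$ so that it avoids $B\setminus\{e\}$ restricted to the diagonals of $P$, and this is the main obstacle: I have to show that $B\setminus\{e\}$ restricted to $\operatorname{diag}(P)$ fails to block $P$. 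I would prove this by combining the explicit description of $B\setminus\{e\}$ given by \cref{Thm:MinBlockerIntro} (whose boundary net and beams constrain which diagonals of $P$ can lie in $B\setminus\{e\}$) with the hypothesis that $B'$ has no non-covered vertex of degree~$2$, which, together with \cref{Obs:DegAtMost2,Obs:Isolated,Obs:Covered}, forces every non-covered vertex of $B'$ to have degree exactly $1$ and have its unique neighbor covered by an ear-cover in $B'$. This rigidity tightens the restriction of $B\setminus\{e\}$ to $P$ enough to allow the required $T_P$ to be produced, completing the contradiction.
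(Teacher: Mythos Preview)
Your size count and your verification that $B$ is a blocker are correct, and your setup of the saturation argument by contradiction is the right start. The case where the witness triangulation $T'$ (missed by $B'\setminus\{e\}$) already contains $(v-1,v+1)$ is handled cleanly.

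The gap is in the remaining case. You correctly identify that after a ``fan flip'' at $v$, the problem reduces to showing that the restriction of the minimum-sized blocker $B^*:=B\setminus\{e\}$ to the link-polygon $P$ of $v$ fails to block $P$. But your plan for this --- ``combining the explicit description of $B^*$ from \cref{Thm:MinBlockerIntro} with the hypothesis that $B'$ has no non-covered vertex of degree $2$'' --- is not an argument; it is a restatement of what remains to be done. This step is exactly the heart of the lemma, and nothing you wrote explains how the degree hypothesis on $B'$ constrains which diagonals of $P$ can lie in $B^*$. Without that, one cannot conclude that the required $T_P$ exists.

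The paper proceeds differently and avoids the fan-flip detour entirely. Once $B^*$ is recognised as a minimum-sized blocker (hence with the boundary-net-plus-beams structure of \cref{Thm:MinBlockerIntro}), the paper observes that $B'$ is obtained from $B^*$ by inserting the vertex $v$ together with exactly two edges, namely $e_1=e=(i,j)$ (with $i,j\neq v$) and $e_2=(v,k)$. It then runs a direct six-case analysis on the position of $v$ relative to the boundary net of $B^*$ and on where $e_2$ lands. In each case, one of the following happens: $B'$ fails to be a blocker, $B'$ fails to be saturated (because $B^*\cup\{e_2\}$ is already an $(n,n-2)$-blocker), or some explicit vertex of $B'$ is non-covered with degree at least $2$ --- contradicting the hypothesis via \cref{Obs:DegAtMost2} and \cref{Obs:Covered}. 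So the ``no non-covered vertex of degree $2$'' assumption is used to rule out structural configurations of $(v,e_1,e_2)$, not to control a single triangulation flip. If you tried to push your route to completion, you would be forced into essentially the same positional case analysis anyway, just wrapped inside the extra $T'$ layer; as written, the proposal stops short of the actual work.
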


\begin{proof}%[Proof of \cref{Lem:NotMin}]
By the construction, $B$ is a blocker of size $n-2$ for the $(n-1)$-gon $C=C'\setminus\{v\}$, as otherwise the triangulation $T\cup \{(v-1,v+1)\}$ of $C'$ is missed by the blocker $B'$ where $T$ is the triangulation of $C$ missed by $B$.  

For proving that $B$ is indeed a (saturated) $(n-1,n-2)$-blocker, we assume towards a contradiction that $B$ is not saturated. It means that  there exists an edge $e_1=(i,j) \in B$, where $i,j\neq v$, such that $B^*=B\setminus\{e_1\}$ is a blocker for the convex $(n-1)$-gon $C$ having $n-3$ edges and thus it is a minimum-sized blocker and hence satisfying the characterization of \cref{Thm:MinBlockerIntro}. Therefore, it is possible to construct $B'$ for the convex $n$-gon $C'=C \cup \{v\}$ from a minimum-sized $(n-1,n-3)$-blocker $B^*$ by inserting the vertex $v$, and two edges $e_1=(i,j)$ and $e_2=(v,k)$, where $i,j\neq v$. 
    
Since $\deg_{B'}(v)=1$, then according to \cref{Obs:Covered}, there should exist an ear-cover $(k-1,k+1) \in B'$, and hence $(k-1,k+1) \in B=B' \setminus \{v\}$.

\medskip

We divide our treatment into the following six cases, according to the different possibilities to insert the new vertex $v$ between the vertices of $B^*$ and the edge $e_2$ emanating from $v$, and we will show that none of them is possible:
\begin{enumerate}
\item[(1)] The vertex $v$ is located anywhere, and $e_2=(v,k)$ ends in one of the vertices of $B^*$ which are not covered by an ear-cover (i.e. at the end of a beam or at the last vertex of the boundary net). Then by \cref{Obs:Covered}, it is necessary that $e_1=(i,j)=(k-1,k+1)$, and thus either the vertex $k-1$ or $k+1$ in $B'$ has degree $2$ and is not covered by an ear-cover, which contradicts the condition on  $B'$ in the formulation of the lemma, and hence is impossible.
        
\medskip
        
\item[(2)] The vertex $v$ is located outside the boundary net of $B^*$ and the edge  $e_2=(v,k)$ ends in one of the internal points of the boundary net of $B^*$, without conflicts with other beams. Then $B^*\cup\{e_2\} \subsetneq B'$ is a minimum-sized $(n,n-2)$-blocker for $C'$ and so $B'$ is not saturated, which again contradicts the conditions of $B'$ in the lemma.
        
\medskip
        
\item[(3)] The vertex $v$ is located outside the boundary net of $B^*$ and the edge $e_2=(v,k)$ ends in one of the internal points of the boundary net of $B^*$ conflicting with a beam $(\ell,s)$. Without loss of generality, we assume that $\ell<v$ and $s<k-1$ (so $(v,k)$ and $(\ell,s)$ indeed conflict), see \cref{Fig:Lem_Min}(a). 
        
Now, considering the triangulation of $C'$ which includes $(v,k-1)\notin B'$, we conclude that $B'$ should block either any triangulation of $C'_1=C'\setminus\{v+1,\dots,0,\dots,k-2\}$, or of $C'_2=C'\setminus\{k,\dots,v-1\}$. In $B'_2$, which is the restriction of $B'$ to $C'_2$, the vertex $v$ is an isolated vertex, as $e_1$ cannot emanate from $v$. Hence, $e_1$ should appear in $C'_1$ and emanate from $\ell$, as otherwise the vertex $\ell$ is isolated in $B'_1=B'\setminus\{v+1,\dots,0,\dots,k-2\}$.
However, $\ell$ is the endpoint of the beam in $B^*$, hence $\deg_{B'}(\ell)=2$ and $\ell$ is not covered by an ear-cover. This contradicts the conditions of $B'$ in the lemma.

\medskip

\begin{figure}[H]
\centering
\includegraphics[width=\linewidth]{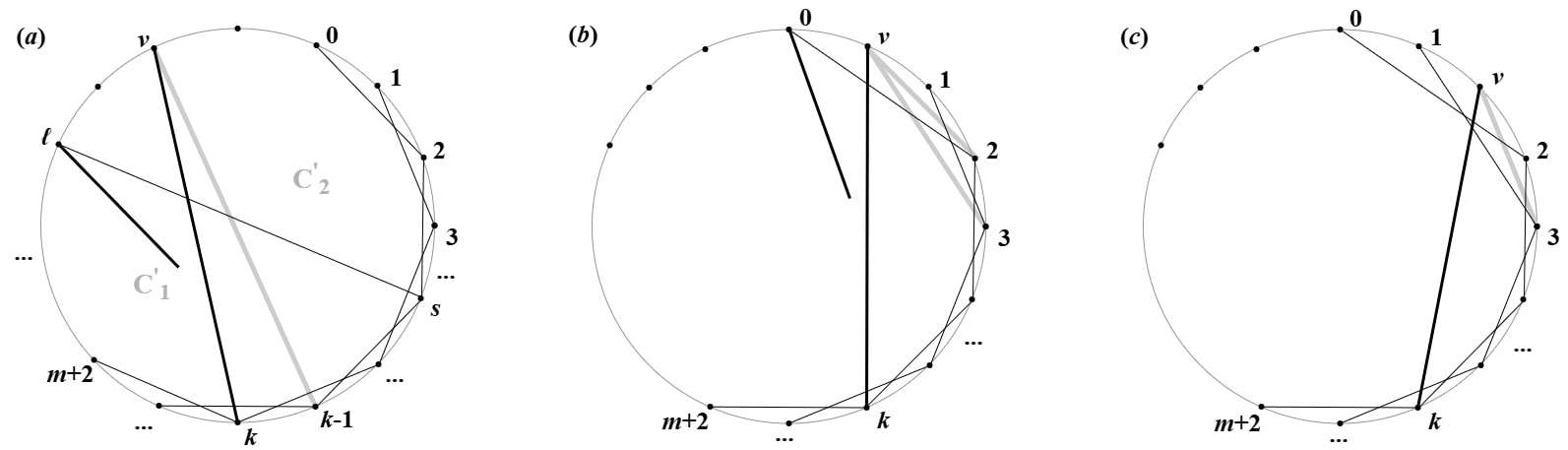}
\caption{Constructing an $(n,n-1)$-blocker $B'$ in \cref{Lem:NotMin}: Part (a) illustrates Case (3), part (b) illustrates Case (4) and part (c) illustrates Case (5). In all parts, the edge $(v,k)$ is drawn as a bold line and edges of the triangulation are drawn in gray.} \label{Fig:Lem_Min}
\end{figure}

\medskip
        
\item[(4)] The vertex $v$ is inserted between the two last vertices of the boundary net of $B^*$, and the edge $e_2$ emanates from $v$ to another internal point $k$ of the boundary net (note that the case that $k$ is not an internal point of the boundary net, was already dealt in Case (1) above). Without loss of generality, we place the vertex $v$ between the vertices $0$ and $1$, see \cref{Fig:Lem_Min}(b).

Note that $e_2\neq(v,2)$, as otherwise $B^*\cup\{e_2\}$ is already a minimum-sized $(n-1,n-3)$-blocker, and then $B'$ is not saturated for any $e_1$.  Also $\deg_{B'}(1)=1$, as otherwise $B'$ has a non-covered vertex of degree $2$, which contradicts the conditions of $B'$ in the lemma. Therefore, in $B^*$, there are no beams emanating from the vertex $1$.
Moreover, we have that $e_2\neq(v,3)$, as otherwise $B^*\cup\{e_2\}$ is already a minimum-sized $(n-1,n-3)$-blocker as in \cref{Thm:MinBlockerIntro}, and hence $B'$ again is not saturated. 

Hence, we have $e_2=(v,k)$ for some $k\in\{4,\dots,m+1\}$.
We then consider a triangulation of $C$ which contains $(v,3)$ to determine the conditions regarding the other added edge $e_1$. To block this triangulation, $e_1$ must emanate either from $v$ -- which is impossible as $\deg_{B'}(v)=1$, or from the vertex $0$, and then in $B'$ the vertex $0$ is a non-covered vertex of degree $2$, which again contradicts the conditions of $B'$ in the lemma.
        
\medskip
        
\item[(5)] The vertex $v$ is inserted in between two near-end vertices of the boundary net of $B^*$ (which has at least three segments), and the edge $e_2$ emanates from $v$ to another internal point of the boundary net (as in the previous case, the case that $k$ is not an internal point of the boundary net, was already dealt in Case (1) above). Without loss of generality, we place the vertex $v$ between vertices $1$ and $2$ (see \cref{Fig:Lem_Min}(c)).

Note that $\deg_{B^*}(1)=1$, as otherwise in $B'$, the vertex 1 is a non-covered vertex of degree 2 or more ($e_2=(v,0)$ could be an ear-cover for it, which is impossible in this case), which contradicts the condition of the lemma.
In this case, $e_2\neq(v,3)$, since otherwise $B^*\cup\{e_2\}$ is already a minimum-sized blocker, and $B'$ is not saturated for any $e_1$.  Also, $\deg_{B^*}(2)\geq 2$, since in $B^*$, there are two ear-covers: $(0,2)$ and $(2,4)$. Therefore, the vertex $2$ in $B'$ is a non-covered vertex satisfying $\deg_{B'}(2) \geq 2$, which contradicts the conditions of $B'$ in the lemma.

\medskip
        
\item[(6)] The vertex $v$ is inserted in between two consecutive internal vertices $i$ and $i+1$ of the boundary net of $B^*$ from the set $\{2,\dots,m\}$, where $m\geq 3$.

Note that $\deg_{B^*}(i)\geq 2$ and $\deg_{B^*}(i+1)\geq 2$ as they are internal vertices of the boundary net. Wherever $e_2$ emanates from $v$, one of vertices $i$ or $i+1$ is a non-covered vertex in $B'$ of degree at least $2$, which contradicts the conditions of $B'$ in the lemma.
\end{enumerate}

\medskip

As none of the above cases is possible, we have that $B=B'\setminus\{v\}$ is indeed a saturated $(n-1,n-2)$-blocker, as asserted.
\end{proof}

\medskip

\noindent \cref{Lem:NotMin} allows us to complete the proof of \cref{Thm:Main-Intro}: 
\begin{proof}[Proof of \cref{Thm:Main-Intro}]
By \cref{Thm:Result_deg2}, for completing the proof of \cref{Thm:Main-Intro}, it is enough to prove that any $(n,n-1)$-blocker has a non-covered vertex of degree $2$.

Assume towards a contradiction that there exists an $(n,n-1)$-blocker $B_0$ with no non-covered vertices of degree $2$, and let the vertex $v^{(0)}$ be a non-covered vertex of maximal degree, i.e. $\deg_{B_0}\hspace{-3pt}\left(v^{(0)}\right)=1$. By \cref{Lem:NotMin}, one can construct a finite sequence of blockers $B_1=B_0\setminus\left\{v^{(0)}\right\}$, $B_2=B_1\setminus\left\{v^{(1)}\right\}$, and so on,  up to $B_p=B_{p-1}\setminus\left\{v^{(p-1)}\right\}$, where $B_p$ has a non-covered vertex of degree $2$. As the sizes of blockers in this sequence decrease by $1$ at each step, and the minimum case $(6,5)$-blocker $B_{(6,5)}$  has non-covered vertices of degree $2$ (see \cref{Ex:6} in \cref{app:small_cases}), such a process will always stop, and therefore a blocker $B_p$ exists for any initial blocker $B_0$.

Now, we consider the last two blockers in this sequence: the $(n-p+1,n-p)$-blocker $B_{p-1}$ and the $(n-p,n-p-1)$-blocker $B_p$. The blocker $B_{p-1}$ has no non-covered vertices of degree $2$, and, after removing the vertex $v^{(p-1)}$ from it, the resulting set $B_p$ has non-covered vertices of degree $2$. It implies that, first, $B_p$ has a single vertex $v^{(p)}$ satisfying $\deg_{B_p}\hspace{-3pt}\left(v^{(p)}\right)=2$, and, second, the edge emanating from $v^{(p-1)}$ in $B_{p-1}$ (which was deleted from $B_{p-1}$ in order to obtain $B_p$) is an ear-cover which covers $v^{(p)}$ (as otherwise $v^{(p)}$ was an non-covered vertex of degree at least $2$ in $B_{p-1}$, a contradiction), see \cref{Fig:Thm_Main_Intro}.

\medskip

\begin{figure}[h]
\centering
\includegraphics[scale=0.4]{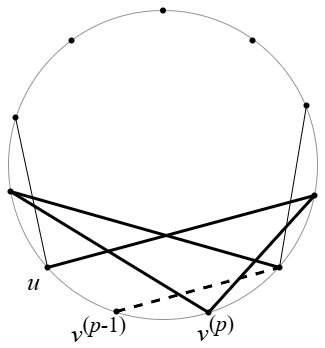}
\caption{Transforming $B_{p-1}$ to $B_{p}$ in the proof of \cref{Thm:Main-Intro}. The ``seagull'' subgraph is drawn with bold lines, two necessary ear-covers in the blocker of Type 1 are drawn with thin lines (as in this case, the new vertex $v$ is added as an internal vertex of the boundary net), and the removed edge is drawn as a dashed line. } 
\label{Fig:Thm_Main_Intro}
\end{figure}

\medskip

By \cref{Obs:Types_2}, the only subgraph having a {\it single} non-covered vertex of degree $2$ is a blocker which contains a ``seagull'' subgraph (a blocker of Type 1, as described in 
\cref{Thm:Result_deg2}), and the blocker $B_{p-1}$ can be obtained from the blocker $B_p$ of Type 1 by inserting a vertex $v^{(p-1)}$ to the left or to the right of $v^{(p)}$, and drawing the ear-cover which covers $v^{(p)}$ from $v^{(p-1)}$. 
Without loss of generality, assume that  $v^{(p-1)}$ is located to the left of $v^{(p)}$. Then the vertex $u$, which is the vertex to the left of $v^{(p-1)}$, is also a non-covered vertex in $B_{p-1}$ of degree at least $2$ (see \cref{Fig:Thm_Main_Intro} and the explanation in its caption), which contradicts the assumption that $B_p$ is the first blocker in the sequence which has non-covered vertices of degree more than $1$.

By this contradiction, it follows that any $(n,n-1)$-blocker has at least one non-covered vertex of degree $2$, and hence, by Theorem \ref{Thm:Result_deg2}, all $(n,n-1)$-blockers satisfy the characterization of \cref{Thm:Main-Intro}.
\end{proof}

\appendix
\section{The saturated blockers for $n \leq 6$ vertices}\label{app:small_cases}

In this appendix, we characterize all saturated blockers of convex polygons with $n \leq 6$ vertices.
\begin{itemize}
\item For $n=3$, the convex polygon (i.e., a triangle) has no diagonals, and so there are no triangulations and no blockers.

\item For $n=4$, the convex polygon (i.e., a quadrilateral) has only two diagonals, and the unique possible blocker is a $(4,2)$-blocker, which consists of both of them; obviously, it is saturated (see Figure~\ref{Fig:Min}(a)). 

\item For $n=5$, the polygon (i.e., a pentagon) has $5$ diagonals. The only $(5,3)$-blocker (up to rotation) is shown in Figure~\ref{Fig:Min}(b). The only possible blocker with $4$ edges (up to rotation) is given in Figure~\ref{Fig:Min}(c), and is obviously not saturated, as it includes a $(5,3)$-blocker. Similarly, there are no saturated blockers with $5$ edges, and hence, the only saturated blockers are rotations of the $(5,3)$-blocker depicted in Figure~\ref{Fig:Min}(c).

\begin{figure}[h]
\centering
\includegraphics[scale=0.4]{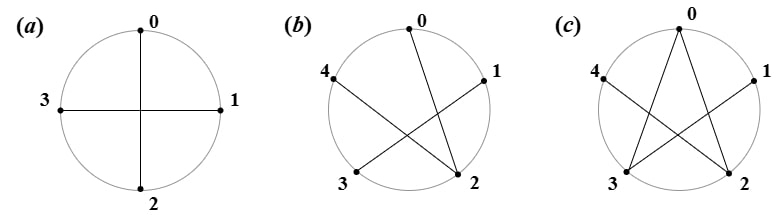}
\caption{(a) A $(4,2)$-blocker; (b) a $(5,3)$-blocker; (c) a blocker with $4$ diagonals for a pentagon (which is not saturated).} 
\label{Fig:Min}
\end{figure}

\end{itemize} 

The minimal case for which there exists a saturated blocker which is not minimum-sized is $n=6$. In \cref{Fig:Ex6}(a-c) we present all possible $(6,4)$-blockers up to rotation, according to the characterization proved in~\cite{KellerS20}. Any (saturated) $(6,5)$-blocker should not include one of them as a subgraph.
\begin{figure}[h]
\centering
\includegraphics[width=0.85\linewidth]{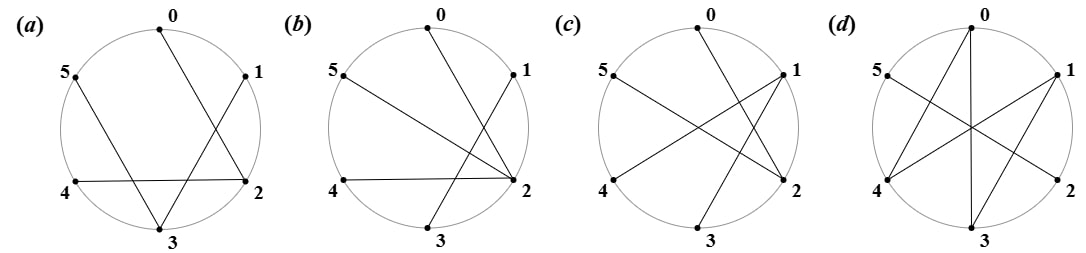}
\caption{(a)-(c) Various $(6,4)$-blockers; (d) a $(6,5)$-blocker.} 
\label{Fig:Ex6}
\end{figure}

\begin{example}[The minimal case for a saturated blocker which is not minimum-sized]\label{Ex:6}
The set of edges $B_{(6,5)}=\{(0,3),(0,4),(1,3),(1,4),(2,5)\}$ is a $(6,5)$-blocker for a convex $6$-gon (see \cref{Fig:Ex6}(d)). 
\end{example}

\begin{proof}
A convex $6$-gon has $9$ diagonals, 5 of them are occupied by $B_{(6,5)}$. The four remaining diagonals $(2,0)$, $(2,4)$, $(5,1)$ and $(5,3)$ consist of two pairs of intersecting diagonals, and hence, any triangulation of the convex 6-gon can contain only two of them, which is not enough, as $3$ diagonals are required. Thus, $B_{(6,5)}$ is indeed a blocker.

It is also saturated, since no edge we remove from $B_{(6,5)}$ would lead to one of the three $(6,4)$-blockers appearing in \cref{Fig:Ex6}(a-c) (up to rotation).
\end{proof}

Note that the saturated $(6,5)$-blocker $B_{(6,5)}$ can be obtained from the minimum-sized $(5,3)$-blocker (see Figure \ref{Fig:Min}(b)) by adding a vertex and two edges emanating from it (similar to the procedure in Proposition \ref{Claim:NearEndNeighbours}, as in the $(5,3)$-blocker, all the edges are parts of the boundary net), see also in the proof of Theorem \ref{Thm:Result_deg2} and Figure \ref{Fig:Thm2_Mincases}(a) above. 

\medskip

An exhaustive search shows that $B_{(6,5)}$ given in \cref{Ex:6} is the only $(6,5)$-blocker (up to rotation), and that there are no saturated blockers for a convex $6$-gon with a larger number of edges.

\bibliographystyle{abbrv}
\bibliography{bibliography}

\end{document}